\newenvironment{conjecture*}[1][]{\textbf{Conjecture #1\hspace{.3em}}}{}
\newenvironment{theorem*}[1]{\textbf{#1}\itshape \hspace{.3em}}{\upshape}
\newenvironment{remark*}[1]{\textbf{#1}\itshape \hspace{.3em}}{\upshape}
\newenvironment{example*}[1]{\textbf{#1}\itshape \hspace{.3em}}{\upshape}
\newenvironment{proof}[1][]{\textbf{Proof #1\hspace{.3em}}}{}
\newtheorem{definition}{Definition}[section]
\newtheorem{theorem}[definition]{Theorem}
\newtheorem{lemma}[definition]{Lemma}
\newtheorem{corollary}[definition]{Corollary}
\newtheorem{remark}[definition]{Remark}
\newtheorem{observation}[definition]{Observation}
\newcounter{kpremark}
\newcommand{\mod}[1]{\ensuremath{\hspace{.5em}(#1)}}
\providecommand{\comp}{\ensuremath\raisebox{.2em}{\,\,\mbox{$\scriptstyle\circ$}}\,\,}
\begin{document}

\addtolength{\parskip}{-2pt}

\begin{frontmatter}






\title{Variations on a Lemma of Nicolas and Serre}
\author{Paul Monsky}

\address{Brandeis University, Waltham MA  02454-9110, USA. monsky@brandeis.edu}

\begin{abstract}
The ``Nicolas-Serre code'',  $(a,b) \leftrightarrow t^{n}$, is a bijection between $N\times N$ and those $t^{n}$, $n$ odd, in $Z/2[t]$. Suppose $A_{n}$, $n$ odd, in $Z/2[t]$ are defined by: $A_{1}= A_{5}= 0$, $A_{3}= t$, $A_{7}= t^{5}$, and $A_{n+8}= t^{8} A_{n} + t^{2} A_{n+2}$. A lemma, Proposition 4.3 of \cite{6}, used to study the Hecke algebra attached to the space of mod $2$ level $1$ modular forms, gives information about the codes $(a,b)$ attached to the monomials appearing in $A_{n}$. The unpublished highly technical proof has been simplified by Gerbelli-Gauthier.

Our Theorem \ref{theorem3.7} generalizes Proposition 4.3. The proof, in sections \ref{section1}--\ref{section3}, is a further simplification of Gerbelli-Gauthier's argument. We build up to the theorem with variants involving the same recurrence, but having different sorts of initial conditions.

Section \ref{section4} treats the recurrence $A_{n+16}= t^{16} A_{n} + t^{4} A_{n+4} + t^{2} A_{n+2}$. Theorem \ref{theorem4.1}, the analog to Theorem \ref{theorem3.7} for this recurrence, is used in \cite{2} and \cite{3} to analyze level 3 Hecke algebras.

Finally we introduce a variant code, $(a,b) \leftrightarrow w^{n}$ which is a bijection between $N\times N$ and those $w^{n}$, $n \equiv 1,3,7,9 \bmod{20}$,  in $Z/2[w]$. We then study the recurrence $A_{n+80}= w^{80} A_{n}+ w^{20} A_{n+20}$, $n \equiv 1,3,7,9 \bmod{20}$, with appropriate initial conditions. Lemma \ref{lemma5.5}, derived from the results of sections \ref{section1}--\ref{section3}, is the precise analog of Proposition 4.3 for this code, this recurrence, and these initial conditions. It is used in \cite{4} and \cite{5} to analyze level 5 Hecke algebras.

\end{abstract}


\end{frontmatter}


\section{Introduction}
\label{section1}

Let $r$ be a power of $2$. Suppose that for each odd $n>0$ we have an $A_{n}$ in $tZ/2[t^{2}]$, and that these satisfy 
\[
(\star)\qquad
A_{n+8r} = t^{8r}A_{n}+t^{2r}A_{n+2r}.
\]

Recursions of this sort arise in the study of the action of the Hecke operator $T_{3}$ on certain spaces of mod~$2$ modular forms. Results about the exponents appearing in $A_{n}$ (and in the $A_{n}$ satisfying similar recursions) have been used by Nicolas and Serre \cite{6} to understand the Hecke algebra attached to the space of mod $2$ modular forms of level $1$. These questions are often difficult (Nicolas and Serre have a proof about a recursion attached to $T_{5}$ that seems quite inscrutable). The questions I have in mind involve the ``Nicolas-Serre code'', and the proofs rely on the easily proved but central:

\begin{observation}
\label{observation1.1}
If the $A_{n}$ satisfy $(\star)$ for $r$, then they also satisfy $(\star)$ when $r$ is replaced by any larger power of $2$.
\end{observation}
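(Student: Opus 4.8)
The plan is to reduce to a single ``doubling'' step: it suffices to show that if the $A_n$ satisfy $(\star)$ for $r$, then they satisfy $(\star)$ for $2r$, since iterating this gives $(\star)$ for $2^{k}r$ for every $k\ge 0$, hence for every power of $2$ larger than $r$. So I fix $r$, assume $(\star)$ holds for $r$, and try to derive
\[
A_{n+16r}=t^{16r}A_{n}+t^{4r}A_{n+4r}
\]
for every odd $n>0$; note this is exactly the statement of $(\star)$ with $r$ replaced by $2r$.

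To obtain it I would apply the hypothesis $(\star)$ three times, always shifting the index by an even amount (so it stays odd and the application is legitimate). First, $(\star)$ with $n$ replaced by $n+8r$ gives $A_{n+16r}=t^{8r}A_{n+8r}+t^{2r}A_{n+10r}$. Then I expand the two terms on the right: $(\star)$ for $n$ itself gives $A_{n+8r}=t^{8r}A_{n}+t^{2r}A_{n+2r}$, while $(\star)$ for $n+2r$ gives $A_{n+10r}=t^{8r}A_{n+2r}+t^{2r}A_{n+4r}$. Substituting and collecting terms,
\[
A_{n+16r}=t^{16r}A_{n}+t^{10r}A_{n+2r}+t^{10r}A_{n+2r}+t^{4r}A_{n+4r}.
\]
Since the $A_n$ lie in $tZ/2[t^{2}]$, the two copies of $t^{10r}A_{n+2r}$ cancel, leaving precisely $A_{n+16r}=t^{16r}A_{n}+t^{4r}A_{n+4r}$.

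There is essentially no obstacle here. The only things to watch are that each invocation of $(\star)$ is applied at an odd index (automatic, since $2r$, $8r$, $10r$ are all even shifts of the odd $n$) and that the two cross terms $t^{10r}A_{n+2r}$ genuinely coincide, so that they vanish in characteristic $2$ — this cancellation is exactly what collapses the four-term expansion to the desired two-term recurrence. An immediate induction on the exponent of $2$ then finishes the proof.
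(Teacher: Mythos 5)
Your proposal is correct, and it is exactly the ``easily proved'' verification the paper has in mind but omits: reduce to a single doubling step, apply $(\star)$ at indices $n+8r$, $n$, and $n+2r$, and note that the two cross terms $t^{10r}A_{n+2r}$ cancel in characteristic $2$. Nothing to add.
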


Proposition 4.3 of \cite{6} is the special case of $(\star)$ with $r=1$, $A_{1}=0$, $A_{3}=t$, $A_{5}=0$, $A_{7}=t^{5}$. Nicolas and Serre's highly technical treatment of it was greatly simplified by Gerbelli-Gauthier \cite{1}.  In this note I use Gerbelli-Gauthier's technique (with some further simplifications) to treat various other initial values for the $A_{n}$. In each case I get a result concerning the ``dominant term'' appearing in the Nicolas-Serre code for $A_{n}$.

Theorem \ref{theorem3.1}, stated at the end of this section and proved in section \ref{section3} is a bit simpler than Proposition 4.3, and its proof nicely illustrates the general ideas. Three variants, Theorems \ref{theorem3.3}, \ref{theorem3.5} and \ref{theorem3.7}, follow. The last of these has (the algebraic form of) Proposition 4.3 as a corollary.

When one tries to treat modular forms of level $\Gamma_{0}(3)$ similarly, one must deal with the fact that the recursion attached to the Hecke operator $U_{3}$ is nothing like $(\star)$. (But this recursion also leads to results of interest---see \cite{3}.) Instead one may use a recursion attached to $T_{7}$:

\[
(\star\star)\qquad
A_{n+16r} = t^{16r}A_{n}+t^{4r}A_{n+4r}+t^{2r}A_{n+2r}.
\]

Under suitable initial conditions there is an analog to Proposition 4.3 for $(\star\star)$. I'll prove such a result, Corollary \ref{corollary4.2}, in section \ref{section4}. Applications to Hecke algebras appear in \cite{2} and \cite{3}. The arguments are motivated once again by Gerbelli-Gauthier's technique; I'm grateful to her for her ideas. In the final sections I define a variant of the Nicolas-Serre code, and prove one more Proposition 4.3 analog, using the corollaries to Theorems \ref{theorem3.1}, \ref{theorem3.3}, \ref{theorem3.5} and \ref{theorem3.7}. This result, Lemma \ref{lemma5.5}, has the same role in treating level $\Gamma_{0}(5)$ as Corollary \ref{corollary4.2} does in treating level $\Gamma_{0}(3)$; see \cite{4} and \cite{5}.

We now introduce the Nicolas-Serre code in language that differs slightly from that of \cite{6}.

\begin{definition}
\label{def1.2}
$g:N\rightarrow N$ is the function with $g(2n)=4g(n)$, $g(2n+1)=g(2n)+1$.
\end{definition}

Since $g(0)=4g(0)$, $g(0)=0$, and the functional equations above give all the $g(n)$. One sees immediately that if $n$ is a sum of distinct $q$, with each $q$ a power of $2$, then $g(n)$ is the sum of the $q^{2}$. It follows that each $n$ in $N$ can be uniquely written as $g(a)+2g(b)$ for some $a$ and $b$ in $N$. So if we let $[a,b]$ be the monomial $t^{n}$, where $n=1+2g(a)+4g(b)$, then $[a,b]\rightarrow [a,b]$ maps $N\times N$ $1-1$ onto the monomials $t^{k}$, $k$ odd. It follows that:

\[
[0,0],[1,0],[0,1],[2,0],[1,1],[0,2],[3,0],[2,1],[1,2],[0,3],\ldots
\]

is a list of all such monomials.

\begin{definition}
\label{def1.3}
$t^{n}$ ``precedes'' or ``is earlier than'' $t^{m}$ if it appears before it in the above list. In other words to say that $[c,d]$ precedes $[a,b]$ is to say that $c+d\le a+b$, and that, in case of equality, $d<b$.
\end{definition}

\begin{theorem*}{Theorem 3.1}
\label{theorem3.1prelim}
Suppose $A_{n}$ in $tZ/2[t^{2}]$ satisfy $(\star)$ of our first paragraph for some $r$. Suppose further that the following condition  $(1\gamma)$ holds when $n<8r$:

\[
(1\gamma)\qquad
A_{n}\mbox{ is a sum of monomials preceding }t^{n}.
\]

Then $(1\gamma)$ holds for all $n$.
\end{theorem*}

\begin{remark}
\label{remark1.4}
Each of $8r, 16r, 32r, \ldots$ can be written as $8q^{2}$ or $16q^{2}$ for some $q$ which is a power of $2$. So Theorem \ref{theorem3.1} is a consequence of the following:

Suppose $q$ is a power of $2$ and that $A_{n}$, $n>0$ and odd, are in $tZ/2[t^{2}]$. Then:

\begin{enumerate}
\item[(1)] If the $A_{n}$ satisfy $(\star)$ for some $r$ dividing $q^{2}$ (and therefore for $r=q^{2}$ by Observation \ref{observation1.1}), and $(1\gamma)$ holds when $n<8q^{2}$, then it holds when $n<16q^{2}$.
\item[(2)] If the $A_{n}$ satisfy $(\star)$ for some $r$ dividing $2q^{2}$, and $(1\gamma)$ holds when $n<16q^{2}$, then it holds when $n<32q^{2}$.
\end{enumerate}
\end{remark}

I'll say now a few words about the proof of (1). We argue by induction on $n$. So suppose $k$ is in $(8q^{2},16q^{2})$---we want to show that $A_{k}$ is a sum of monomials preceding $t^{k}$. Let $n=k-8q^{2}$ and write $t^{n}$ as $[a,b]$. Then $1+2g(a)+4g(b)=n<8q^{2}$, and it follows that $a<2q$. So $g(a+2q) = g(a) +4q^{2}$, and replacing $a$ by $a+2q$ in $1+2g(a)+4g(b)$ increases it by $8q^{2}$. So $t^{k} =t^{8q^{2}}[a,b]=[a+2q,b]$, and we want to show that $A_{k}$ is a sum of monomials preceding $[a+2q,b]$. Since $(\star)$ holds for $r=q^{2}$, $A_{k}=t^{8q^{2}}A_{n}+t^{2q^{2}}A_{n+2q^{2}}$, and it's enough to show that $t^{8q^{2}}A_{n}$ and $t^{2q^{2}}A_{n+2q^{2}}$ are such monomial sums. The simple machinery used to prove that this is true is developed in the next section.

Next I turn to (2). Again we argue by induction on $n$. So suppose $k$ is in $(16q^{2},32q^{2})$---we want to show that $A_{k}$ is a sum of monomials preceding $t^{k}$.  Let $n=k-16q^{2}$, and write $t^{n}$ as $[a,b]$. Then $1+2g(a)+4g(b)=n<16q^{2}$, and $b<2q$. Then $g(b+2q) = g(b) +4q^{2}$, and replacing $b$ by $b+2q$ in $1+2g(a)+4g(b)$ results in an increase of $16q^{2}$. So $t^{k} =t^{16q^{2}}[a,b]=[a, b+2q]$, and we want to show that $A_{k}$ is a sum of monomials preceding $[a, b+2q]$. Since $(\star)$ holds for $r=2q^{2}$, $A_{k}=t^{16q^{2}}A_{n}+t^{4q^{2}}A_{n+4q^{2}}$, and it's enough to show that $t^{16q^{2}}A_{n}$ and $t^{4q^{2}}A_{n+4q^{2}}$ are such monomial sums. Again, the next section furnishes the machinery that's needed.

\section{The effect of multiplication by $\bm t^{2q^{2}}$ and $\bm t^{4q^{2}}$}
\label{section2}

From now on, $q$ is a power of $2$. We start with an easy result.

\begin{theorem}
\label{theorem2.1}
Suppose $\left\lfloor\frac{a}{q}\right\rfloor$ is even. Then $t^{2q^{2}}[a,b]=[a+q,b]$. Suppose $\left\lfloor\frac{b}{q}\right\rfloor$ is even. Then $t^{4q^{2}}[a,b]=[a,b+q]$. 
\end{theorem}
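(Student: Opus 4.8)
The plan is to unwind the definition of the code $[a,b] = t^{1+2g(a)+4g(b)}$ and reduce the claim to a statement purely about the function $g$. For the first assertion, I want to show that $t^{2q^2}[a,b] = [a+q,b]$, i.e.\ that $g(a+q) = g(a) + q^2$ whenever $\lfloor a/q\rfloor$ is even. Indeed, if this identity on $g$ holds, then $1 + 2g(a+q) + 4g(b) = \bigl(1 + 2g(a) + 4g(b)\bigr) + 2q^2$, which is exactly multiplication of the corresponding monomial by $t^{2q^2}$. Similarly the second assertion reduces to $g(b+q) = g(b) + q^2$ when $\lfloor b/q\rfloor$ is even, since then $1 + 2g(a) + 4g(b+q) = \bigl(1+2g(a)+4g(b)\bigr) + 4q^2$.

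**Proving the key identity on $g$.** So everything comes down to: if $q$ is a power of $2$ and $\lfloor m/q\rfloor$ is even, then $g(m+q) = g(m) + q^2$. Here I would use the additive description of $g$ already noted in the excerpt: if $m = \sum q_i$ is the binary expansion of $m$ (the $q_i$ distinct powers of $2$), then $g(m) = \sum q_i^2$. The hypothesis that $\lfloor m/q \rfloor$ is even says precisely that the binary digit of $m$ in the $q$'s place is $0$, i.e.\ $q$ does not appear among the $q_i$. Then $m+q$ has binary expansion $\{q\}\cup\{q_i\}$ (no carrying occurs, since the $q$-bit was $0$), so $g(m+q) = q^2 + \sum q_i^2 = q^2 + g(m)$. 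That is the whole argument; both assertions of the theorem follow by applying this with $m=a$ (resp.\ $m=b$).

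**Main obstacle.** There isn't a deep obstacle here — the theorem is deliberately "an easy result" used as machinery. The only point requiring a little care is justifying rigorously that $\lfloor m/q\rfloor$ even $\iff$ the $q$-bit of $m$ vanishes, and that no carrying occurs when adding $q$; this is immediate from writing $m = \lfloor m/q\rfloor \cdot q + (m \bmod q)$ and observing $\lfloor m/q\rfloor$ even forces $\lfloor (m+q)/q \rfloor = \lfloor m/q\rfloor + 1$ with the low-order part $m \bmod q$ unchanged, so in binary we simply toggle a $0$-bit to a $1$-bit. Alternatively, one can avoid binary expansions entirely and argue directly from the recursion in Definition~\ref{def1.2} by induction on the number of bits of $q$: the base case $q=1$ says $g(m+1)=g(m)+1$ when $m$ is even, which is literally the defining relation $g(2n+1)=g(2n)+1$; the inductive step writes $m = 2n$ or $2n+1$, uses $\lfloor m/q\rfloor$ even $\Rightarrow$ $m$ even (as $q\ge 2$) so $m=2n$ with $\lfloor n/(q/2)\rfloor$ even, and then $g(m+q) = g(2(n+q/2)) = 4g(n+q/2) = 4\bigl(g(n) + (q/2)^2\bigr) = 4g(n) + q^2 = g(m) + q^2$. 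I would present this inductive version since it stays within the paper's established notation.
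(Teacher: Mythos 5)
Your proof is correct and takes essentially the same route as the paper: you reduce both assertions to the identity $g(m+q)=g(m)+q^2$ when $\lfloor m/q\rfloor$ is even, and justify it by noting that the hypothesis means $q$ is absent from the binary expansion of $m$, so $g(m+q)=g(m)+q^2$ follows from the additive description of $g$. The paper states this in one line; you spell out the same idea with more care (and offer an equivalent inductive variant), but there is no substantive difference in method.
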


\begin{proof}
If $\left\lfloor\frac{a}{q}\right\rfloor$ is even, $q$ does not occur when $a$ is written as a sum of distinct powers of $2$. So $g(a+q)=g(a)+q^{2}$, and replacing $a$ by $a+q$ in $1+2g(a)+4g(b)$ results in an increase of $2q^{2}$. The second result is proved similarly.\qed
\end{proof}

We assume temporarily that $n=1+2g(a)+4g(b)$ is $1 \bmod{2q^{2}}$. This evidently holds if and only if $q$ divides $a$ and $b$. We shall study $t^{2q^{2}}[a,b]$ and $t^{4q^{2}}[a,b]$ under these assumptions, showing the following: If $\frac{a}{q}$ is odd then $t^{2q^{2}}[a,b]$ is earlier than $[a+q,b]$. If $\frac{b}{q}$ is odd then $t^{4q^{2}}[a,b]$ is earlier than $[a,b+q]$.  

\begin{definition}
\label{def2.2}
Suppose $i$ and $j$ are each in $\{0,q\}$. Then $S_{i,j}$ takes $[a,b]$ to $[2a+i, 2b+j]$.
\end{definition}

\begin{lemma}
\label{lemma2.3}
If $[a,b]=t^{n}$, $S_{0,0}$, $S_{q,0}$, $S_{0,q}$ and $S_{q,q}$ take $[a,b]$ to $t^{4n-3}$, $t^{(4n-3)+2q^{2}}$, $t^{(4n-3)+4q^{2}}$ and $t^{(4n-3)+6q^{2}}$.
\end{lemma}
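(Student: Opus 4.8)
The plan is to work directly from the definition of the code $[a,b]$ as the monomial $t^{n}$ with $n = 1 + 2g(a) + 4g(b)$, together with the functional equations $g(2a) = 4g(a)$ and $g(2a+1) = g(2a)+1$ that characterize $g$. The statement asserts that applying $S_{i,j}$ to $[a,b]=t^n$ produces $t^{(4n-3)+e}$ where $e$ is $0$, $2q^2$, $4q^2$, $6q^2$ according as $(i,j)$ is $(0,0)$, $(q,0)$, $(0,q)$, $(q,q)$. First I would compute the exponent of $S_{0,0}[a,b] = [2a,2b]$: this is $1 + 2g(2a) + 4g(2b) = 1 + 8g(a) + 16g(b) = 4\bigl(1 + 2g(a) + 4g(b)\bigr) - 3 = 4n-3$. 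That handles the first case.

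**The three shifted cases.** Next I would handle $S_{q,0}$, i.e.\ compute the exponent of $[2a+q, 2b]$. The key observation is that $2a$, when written as a sum of distinct powers of $2$, does not involve $q$ (here I use that $q$ is a power of $2$ and that $2a$ is even — wait, I must be careful: $q$ could be $1$). Actually the cleaner route: since $q$ is a power of $2$, $2a$ is a sum of distinct powers of $2$ each of which is either $\ge 2$ times a power dividing $a$; in any case $q$ appears in $2a$ iff $q/2$ appears in $a$ when $q\ge 2$, and when $q=1$ we have $2a$ even so $1$ does not appear. Rather than belabor this, I would invoke Theorem~\ref{theorem2.1}: checking that $\lfloor (2a)/q\rfloor$ need not be even in general, so instead I compute $g(2a+q)$ directly. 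Since $2a+q \equiv q \pmod{2q}$ when... — the robust approach is: write $a$ in binary; then $2a$ has a $0$ in the $1$'s place, so if $q=1$ then $g(2a+1) = g(2a)+1$; and if $q = 2^k$ with $k\ge 1$, the bit of $2a$ in position $k$ equals the bit of $a$ in position $k-1$, which may be anything — so this naive claim is false unless we use the standing hypothesis. I would therefore note that Lemma~\ref{lemma2.3} is applied in the context where $q\mid a$ and $q\mid b$ (the assumption "temporarily that $n$ is $1\bmod 2q^2$" in force in this part of Section~\ref{section2}); under $q\mid a$, the bit of $a$ in position $k-1$ is $0$, hence $q$ does not occur in $2a$, so $g(2a+q) = g(2a) + q^2$. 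Then the exponent of $[2a+q,2b]$ is $(4n-3) + 2q^2$. Symmetrically, $q\mid b$ gives that $q$ does not occur in $2b$, so $g(2b+q) = g(2b)+q^2$, and the exponent of $[2a, 2b+q]$ is $(4n-3) + 4q^2$ (the factor $4$ in front of $g(b)$ turning the $q^2$ into $4q^2$). Adding both shifts, $[2a+q, 2b+q]$ has exponent $(4n-3) + 2q^2 + 4q^2 = (4n-3) + 6q^2$.

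**The main obstacle.** The only genuinely delicate point is the claim that $q$ does not appear in the binary expansion of $2a$ (resp.\ $2b$), which is what lets us split $g(2a+q) = g(2a) + q^2$; this relies on the standing assumption $q\mid a$, $q\mid b$ that governs this portion of Section~\ref{section2}, so I would state that dependence explicitly at the start of the proof. Everything else is a mechanical substitution into $n = 1 + 2g(a) + 4g(b)$ using additivity of $g$ on disjoint binary supports, together with $g(2a) = 4g(a)$. I would present the four cases compactly:
\[
1 + 2g(2a+i) + 4g(2b+j) = (4n-3) + 2\cdot[i=q]\cdot q^2 + 4\cdot[j=q]\cdot q^2,
\]
and read off the four values $0, 2q^2, 4q^2, 6q^2$.
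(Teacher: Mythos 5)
Your proof is correct and follows the same route as the paper's one-line argument, which simply cites the identities $g(2a)=4g(a)$ and $g(2a+q)=4g(a)+q^{2}$ together with their analogues for $b$. You are right that the second identity is not valid for arbitrary $a$ (e.g.\ $q=2$, $a=1$ gives $g(4)=16\ne 4g(1)+q^{2}=8$) and relies on the standing assumption $q\mid a$, $q\mid b$ that governs this part of Section~\ref{section2}; the paper leaves that dependence implicit, and your care in flagging it is a useful clarification rather than a deviation from the argument.
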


\begin{proof}
This follows from the fact that $g(2a)=4g(a)$, $g(2a+q)=4g(a)+q^{2}$, $g(2b)=4g(b)$, $g(2b+q)=4g(b)+q^{2}$.\qed
\end{proof}

Note that if $[a,b]$ precedes $[c,d]$ then $S_{i,j}[a,b]$ precedes $S_{i,j}[c,d]$; this is key to what follows.

\begin{lemma}
\label{lemma2.4} \hspace{1em}\\
\vspace{-3ex}
\begin{enumerate}
\item[(1)] $t^{4q^{2}}\comp S_{0,q} = S_{0,0} \comp t^{2q^{2}} = t^{2q^{2}}\comp S_{q,q}$.
\item[(2)] $t^{4q^{2}}\comp S_{q,q} = S_{q,0} \comp t^{2q^{2}}$.
\end{enumerate}

(Here $t^{4q^{2}}$ and $t^{2q^{2}}$ denote the operations of multiplication by $t^{4q^{2}}$ and $t^{2q^{2}}$.)
\end{lemma}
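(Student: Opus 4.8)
The plan is to check each identity by applying both sides to an arbitrary monomial $[a,b]=t^{n}$ and comparing exponents. Two ingredients do everything: Lemma \ref{lemma2.3}, which evaluates the operators $S_{i,j}$, and the triviality that multiplication by $t^{c}$ adds $c$ to the exponent. Under the standing hypothesis $n\equiv1\pmod{2q^{2}}$ --- equivalently $q\mid a$ and $q\mid b$ --- Lemma \ref{lemma2.3} applies to $[a,b]$; moreover this hypothesis is preserved by every operator in the statement, since multiplication by $t^{2q^{2}}$ or $t^{4q^{2}}$ changes $n$ by a multiple of $2q^{2}$, and each $S_{i,j}$ with $i,j\in\{0,q\}$ sends $[a,b]$ to $[2a+i,2b+j]$, whose coordinates are again divisible by $q$. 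So Lemma \ref{lemma2.3} may legitimately be invoked at every stage --- in particular after a preliminary multiplication.

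For (1): by Lemma \ref{lemma2.3}, $S_{0,q}$ sends $t^{n}$ to $t^{(4n-3)+4q^{2}}$, and multiplying by $t^{4q^{2}}$ then gives $t^{(4n-3)+8q^{2}}$; multiplying $t^{n}$ by $t^{2q^{2}}$ gives $t^{n+2q^{2}}$, to which $S_{0,0}$ assigns $t^{4(n+2q^{2})-3}=t^{(4n-3)+8q^{2}}$ (Lemma \ref{lemma2.3} with $n$ replaced by $n+2q^{2}$); and $S_{q,q}$ sends $t^{n}$ to $t^{(4n-3)+6q^{2}}$, which multiplication by $t^{2q^{2}}$ carries to $t^{(4n-3)+8q^{2}}$. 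All three composites send $t^{n}$ to $t^{(4n-3)+8q^{2}}$. For (2): $S_{q,q}$ sends $t^{n}$ to $t^{(4n-3)+6q^{2}}$, so $t^{4q^{2}}\comp S_{q,q}$ sends $t^{n}$ to $t^{(4n-3)+10q^{2}}$; while $t^{2q^{2}}$ sends $t^{n}$ to $t^{n+2q^{2}}$, which $S_{q,0}$ sends to $t^{(4(n+2q^{2})-3)+2q^{2}}=t^{(4n-3)+10q^{2}}$. The two agree. Since maps agreeing on every monomial coincide, both identities follow.

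I do not expect a real obstacle here: the content is entirely in Lemma \ref{lemma2.3}, and the rest is bookkeeping with exponents. The one point requiring a moment's care is confirming that Lemma \ref{lemma2.3} is still applicable after the preliminary multiplications by $t^{2q^{2}}$ --- i.e., that the divisibility $q\mid a$, $q\mid b$ propagates --- which is exactly the observation recorded at the outset.
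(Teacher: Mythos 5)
Your proof is correct and follows exactly the paper's approach: apply every operator to a monomial $t^{n}$ and compare exponents via Lemma~\ref{lemma2.3}. The paper compresses the whole verification into one sentence; you spell out the same exponent bookkeeping and add the (correct, and worth noting) observation that the standing divisibility hypothesis $q\mid a$, $q\mid b$ is preserved under each $S_{i,j}$ and under multiplication by $t^{2q^{2}}$, so that Lemma~\ref{lemma2.3} remains applicable after the preliminary multiplication.
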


\begin{proof}
Applying the operators of Lemma \ref{lemma2.4} to $t^{n}$, we see from Lemma \ref{lemma2.3} that all the operators in (1) give $t^{(4n-3)+8q^{2}}$, while those in (2) give\linebreak $t^{(4n-3)+10q^{2}}$.\qed
\end{proof}

\begin{lemma}
\label{lemma2.5}
If $\frac{a}{q}$ is odd, $t^{2q^{2}}[a,b]$ precedes $[a+q,b]$.
\end{lemma}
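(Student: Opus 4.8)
The plan is to argue by induction on the coordinate sum $a+b$, recalling the standing assumption that $q$ divides both $a$ and $b$. Since $\frac{a}{q}$ is odd we may write $a=2c+q$ with $q\mid c$, and since $q\mid b$ we may write $b=2d+j$ with $q\mid d$ and $j\in\{0,q\}$, taking $j=0$ when $\frac{b}{q}$ is even and $j=q$ when it is odd. With these choices $[a,b]=S_{q,j}[c,d]$ while $[a+q,b]=S_{0,j}[c+q,d]$, so it is enough to evaluate $t^{2q^2}S_{q,j}[c,d]$ and check that it precedes $S_{0,j}[c+q,d]$. We shall use repeatedly that each $S_{i,j}$ preserves precedence, together with the following consequence of Theorem~\ref{theorem2.1} and the inductive hypothesis: whenever $q\mid c$, $q\mid d$ and $[c,d]$ precedes $[a,b]$, the monomial $t^{2q^2}[c,d]$ equals $[c+q,d]$ if $\frac{c}{q}$ is even and precedes $[c+q,d]$ if $\frac{c}{q}$ is odd.

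Suppose first that $j=0$. By Lemma~\ref{lemma2.3} the operator $S_{q,0}$ shifts the exponent of $t^{n}$ by $2q^{2}$ and $S_{0,q}$ shifts it by $4q^{2}$, so $t^{2q^2}\comp S_{q,0}=S_{0,q}$. Hence $t^{2q^2}[a,b]=S_{0,q}[c,d]=[2c,\,2d+q]$, whose coordinate sum $2c+2d+q$ is strictly less than the coordinate sum $2c+2d+2q$ of $[a+q,b]=S_{0,0}[c+q,d]=[2c+2q,\,2d]$ (here we use $q\ge 1$). So $t^{2q^2}[a,b]$ precedes $[a+q,b]$. This case already covers the smallest admissible pair $[q,0]$, so there is no separate base case to treat.

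Now suppose $j=q$. Using Lemma~\ref{lemma2.4}(1) in the form $t^{2q^2}\comp S_{q,q}=S_{0,0}\comp t^{2q^2}$ gives $t^{2q^2}[a,b]=S_{0,0}(t^{2q^2}[c,d])$. The pair $[c,d]$ satisfies $q\mid c$, $q\mid d$ and precedes $[a,b]$, since $c+d<2(c+d)+2q=a+b$. By the dichotomy recorded above, $t^{2q^2}[c,d]$ equals or precedes $[c+q,d]$; applying the precedence-preserving operator $S_{0,0}$, the monomial $t^{2q^2}[a,b]=S_{0,0}(t^{2q^2}[c,d])$ equals or precedes $S_{0,0}[c+q,d]=[2c+2q,\,2d]$. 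Finally $[2c+2q,\,2d]$ precedes $[2c+2q,\,2d+q]=S_{0,q}[c+q,d]=[a+q,b]$, having the smaller coordinate sum. Hence $t^{2q^2}[a,b]$ precedes $[a+q,b]$, completing the induction.

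The step I expect to require most care is the passage in the case $j=q$: one must invoke Lemma~\ref{lemma2.4}(1) in the shape $t^{2q^2}\comp S_{q,q}=S_{0,0}\comp t^{2q^2}$ — rather than the equally valid $t^{2q^2}\comp S_{q,q}=t^{4q^2}\comp S_{0,q}$ — so that the recursion reduces to a smaller instance of Lemma~\ref{lemma2.5} itself instead of forcing a companion statement about multiplication by $t^{4q^2}$; and one must check that this recursion strictly decreases $a+b$ while preserving $q\mid c$, $q\mid d$, so that the induction is well founded. Everything else is bookkeeping: reading exponent shifts off Lemma~\ref{lemma2.3} and observing that the coordinate sums being compared always differ by exactly $q$.
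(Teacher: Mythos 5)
Your proof is correct and follows essentially the same route as the paper's: induction on $a+b$, splitting on the parity of $\frac{b}{q}$, handling the even case directly and the odd case by writing $[a,b]=S_{q,q}[c,d]$ and invoking Lemma~\ref{lemma2.4}(1) together with Theorem~\ref{theorem2.1} and the inductive hypothesis. The only cosmetic difference is that in the $j=0$ case you derive $t^{2q^2}\comp S_{q,0}=S_{0,q}$ from Lemma~\ref{lemma2.3}, whereas the paper computes the shift directly from the properties of $g$; the content is the same.
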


\begin{proof}
We argue by induction on $a+b$. If $\frac{b}{q}$ is even, $g(a-q)=g(a)-q^{2}$,  $g(b+q)=g(b)+q^{2}$, and replacing $a$ and $b$ by $a-q$ and $b+q$ increases $1+2g(a)+4g(b)$ by $2q^{2}$. So $t^{2q^{2}}[a,b]=[a-q,b+q]$ which precedes $[a+q,b]$. Suppose finally that $\frac{a}{q}$ and $\frac{b}{q}$ are odd. Then $[a,b]= S_{q,q}[c,d]$ for some $c$ and $d$, divisible by $q$, and $c+d<a+b$. By Lemma \ref{lemma2.4}, $t^{2q^{2}}[a,b]=S_{0,0}t^{2q^{2}}[c,d]$. Theorem \ref{theorem2.1} and the induction show that $t^{2q^{2}}[c,d]$ equals or precedes $[c+q,d]$. So $t^{2q^{2}}[a,b]$ equals or precedes $[2c+2q,2d]=[a+q, b-q]$, which precedes $[a+q,b]$.\qed
\end{proof}

\begin{lemma}
\label{lemma2.6}
If $\frac{b}{q}$ is odd, $t^{4q^{2}}[a,b]$ precedes $[a,b+q]$.
\end{lemma}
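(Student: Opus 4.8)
The plan is to mimic the proof of Lemma \ref{lemma2.5} almost verbatim, with the roles of the two coordinates suitably adjusted. Recall we are in the situation $n = 1 + 2g(a) + 4g(b) \equiv 1 \bmod{2q^2}$, so $q \mid a$ and $q \mid b$; and we assume $\tfrac{b}{q}$ is odd. I would argue by induction on $a+b$.

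First, suppose $\tfrac{a}{q}$ is even. Then $q$ does not occur in the binary expansion of $a$, while $q$ does occur in that of $b$. Hence $g(a+q) = g(a) + q^2$ and $g(b-q) = g(b) - q^2$, so replacing $a$ by $a+q$ and $b$ by $b-q$ changes $1 + 2g(a) + 4g(b)$ by $2q^2 - 4q^2 = -2q^2$; equivalently, replacing $a$ by $a+q$ and $b$ by $b-q$ \emph{increases} $1+2g(a)+4g(b)$ by... wait — I need an increase of $4q^2$ to realize multiplication by $t^{4q^2}$. So instead: since $\tfrac{a}{q}$ is even, $g(a+q) = g(a)+q^2$, and $t^{4q^2}[a,b]$ would naively be $[a,b+q]$ by Theorem \ref{theorem2.1} if $\tfrac{b}{q}$ were even — but it is odd. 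The correct move is: replacing $a$ by $a+q$ (adding $2q^2$) and then we still need $2q^2$ more. Since $\tfrac{b}{q}$ is odd, write $b = q + b'$ with $q \nmid b'$ in binary — hmm, this is getting delicate. The clean approach: observe $t^{4q^2} = t^{2q^2}\comp t^{2q^2}$ is not directly available, but I can instead use that when $\tfrac{a}{q}$ is even and $\tfrac b q$ is odd, $[a,b] = S_{0,q}[c,d]$ for suitable $c,d$ divisible by $q$ with $c+d < a+b$ (since the first coordinate is even and the second is odd, peel off one application of the "shift" map). Actually, more precisely: $[a,b]$ has $a$ with even $\tfrac a q$ and $b$ with odd $\tfrac b q$; so $a = 2c$ or $a = 2c+q$...

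Let me restate the real plan. \textbf{Case 1: $\tfrac a q$ even.} Then since $q \nmid_{\text{bin}} a$ but $q \mid_{\text{bin}} b$ (as $\tfrac b q$ is odd means $q$ appears in $b$), we have $g(a+q)=g(a)+q^2$ and $g(b-q)=g(b)-q^2$; then $t^{4q^2}[a,b] = [a+q, b-q]$? Check: the exponent increases by $2q^2 - 4q^2 = -2q^2 \ne 4q^2$. That fails. Alternatively, in Case 1, I claim $[a,b] = S_{0,q}[c,d]$: indeed $a$ even as a $q$-multiple and $b$ odd as a $q$-multiple forces, after dividing the binary expansions appropriately, $a = 2c$, $b = 2d + q$ with $q \mid c, d$, $c + d < a + b$. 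Then by Lemma \ref{lemma2.4}(1), $t^{4q^2}\comp S_{0,q} = S_{0,0}\comp t^{2q^2}$, so $t^{4q^2}[a,b] = S_{0,0}\,t^{2q^2}[c,d]$. Now $\tfrac c q$ may be even or odd; by Theorem \ref{theorem2.1} and Lemma \ref{lemma2.5}, $t^{2q^2}[c,d]$ equals or precedes $[c+q,d]$, so applying $S_{0,0}$ (which preserves "precedes"), $t^{4q^2}[a,b]$ equals or precedes $S_{0,0}[c+q,d] = [2c+2q, 2d] = [a+2q, b-q]$, which precedes $[a, b+q]$ since the coordinate sum is $a + 2q + b - q = a + b + q = $ same as $[a,b+q]$, and $b - q < b + q$. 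Good.

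\textbf{Case 2: $\tfrac a q$ odd (and $\tfrac b q$ odd).} Then $[a,b] = S_{q,q}[c,d]$ with $q \mid c,d$ and $c+d < a+b$. By Lemma \ref{lemma2.4}(2), $t^{4q^2}\comp S_{q,q} = S_{q,0}\comp t^{2q^2}$, so $t^{4q^2}[a,b] = S_{q,0}\, t^{2q^2}[c,d]$, which equals or precedes $S_{q,0}[c+q,d] = [2c+3q, 2d] = [a+2q, b-q]$, again preceding $[a,b+q]$. This finishes the induction; the base case is absorbed since the smallest instances have $\tfrac a q$ even and are handled by Case 1 (where the recursion bottoms out at $c+d=0$). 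I expect the main obstacle to be verifying cleanly that Case 1's hypothesis ($\tfrac a q$ even, $\tfrac b q$ odd) really does force $[a,b]$ to lie in the image of $S_{0,q}$ with $c,d$ divisible by $q$ — i.e. getting the bookkeeping on binary expansions and the $q$-divisibility exactly right — but this is the same bookkeeping already used implicitly in Lemma \ref{lemma2.5} and follows from Lemma \ref{lemma2.3}.
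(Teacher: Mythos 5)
Your argument, once the false start is discarded, coincides with the paper's proof: both cases ($\frac{a}{q}$ even giving $[a,b]=S_{0,q}[c,d]$, $\frac{a}{q}$ odd giving $[a,b]=S_{q,q}[c,d]$) are reduced via Lemma \ref{lemma2.4} to Theorem \ref{theorem2.1} and Lemma \ref{lemma2.5}, arriving at the same bound $[a+2q,b-q]$, which precedes $[a,b+q]$. The ``induction on $a+b$'' framing is actually unnecessary here and the paper does not use it --- the reduction is to the already-proved Lemma \ref{lemma2.5}, not to smaller instances of Lemma \ref{lemma2.6} --- but this does not affect correctness.
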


\begin{proof}
If $\frac{a}{q}$ is even, then $[a,b]= S_{0,q}[c,d]$ for some $c$ and $d$ divisible by $q$. By Lemma \ref{lemma2.4}, $t^{4q^{2}}[a,b]=S_{0,0}t^{2q^{2}}[c,d]$. By Theorem \ref{theorem2.1} and Lemma \ref{lemma2.5}, $t^{2q^{2}}[c,d]$ is $[c+q,d]$ or an earlier monomial. So $t^{4q^{2}}[a,b]$ equals or precedes $[2c+2q,2d]=[a+2q, b-q]$, and this precedes $[a+q,b]$. If $\frac{a}{q}$ is odd, then $[a,b]= S_{q,q}[c,d]$ for some $c$ and $d$ divisible by $q$. By Lemma \ref{lemma2.4}, $t^{4q^{2}}[a,b]=S_{q,0}t^{2q^{2}}[c,d]$. By Theorem \ref{theorem2.1} and Lemma \ref{lemma2.5}, $t^{2q^{2}}[c,d]$ is $[c+q,d]$ or an earlier monomial. So $t^{4q^{2}}[a,b]$ equals or precedes $[2c+3q,2d]=[a+2q, b-q]$, and this precedes $[a,b+q]$.\qed
\end{proof}

We now drop the assumption that $q$ divides $a$ and $b$.

\begin{theorem}
\label{theorem2.7} \hspace{1em}\\
\vspace{-3ex}
\begin{enumerate}
\item[(1)] If $\left\lfloor\frac{a}{q}\right\rfloor$ is odd, $t^{2q^{2}}[a,b]$ precedes $[a+q,b]$.
\item[(2)] If $\left\lfloor\frac{b}{q}\right\rfloor$ is odd, $t^{4q^{2}}[a,b]$ precedes $[a,b+q]$.
\end{enumerate}
\end{theorem}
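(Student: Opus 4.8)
The plan is to reduce Theorem~\ref{theorem2.7} to the already-established case where $q$ divides both $a$ and $b$ (Lemmas~\ref{lemma2.5} and~\ref{lemma2.6}), by writing an arbitrary $[a,b]$ as the image under one of the $S_{i,j}$ of a pair divisible by $q$, and then pushing the multiplication by $t^{2q^2}$ or $t^{4q^2}$ through that $S_{i,j}$ via the identities of Lemma~\ref{lemma2.4}. Concretely, for part~(1), suppose $\lfloor a/q\rfloor$ is odd. I would split on the parity of $\lfloor b/q\rfloor$. If $\lfloor b/q\rfloor$ is even, then the lowest "$q$-digit" of $a$ is on and that of $b$ is off, so $g(a-q)=g(a)-q^2$ and $g(b+q)=g(b)+q^2$; replacing $a,b$ by $a-q,b+q$ leaves $1+2g(a)+4g(b)$ increased by exactly $2q^2$, hence $t^{2q^2}[a,b]=[a-q,b+q]$, and $[a-q,b+q]$ precedes $[a+q,b]$ since the index sum is the same but the second coordinate is strictly smaller. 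If instead $\lfloor b/q\rfloor$ is also odd, then $[a,b]=S_{q,q}[c,d]$ with $c=\lfloor a/q\rfloor\cdot q - q$... more carefully, write $a=2c+q$-type relations: since $\lfloor a/q\rfloor,\lfloor b/q\rfloor$ are both odd we can solve $[a,b]=S_{q,q}[c,d]$ with $c,d$ divisible by $q$ and $c+d<a+b$; by Lemma~\ref{lemma2.4}(1), $t^{2q^2}[a,b]=t^{2q^2}\comp S_{q,q}[c,d]=S_{0,0}\comp t^{2q^2}[c,d]$, and Theorem~\ref{theorem2.1} together with Lemma~\ref{lemma2.5} gives that $t^{2q^2}[c,d]$ equals or precedes $[c+q,d]$, so applying the order-preserving map $S_{0,0}$ yields that $t^{2q^2}[a,b]$ equals or precedes $S_{0,0}[c+q,d]=[2c+2q,2d]=[a+q,b-q]$, which precedes $[a+q,b]$.

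For part~(2), suppose $\lfloor b/q\rfloor$ is odd, and split on the parity of $\lfloor a/q\rfloor$. If $\lfloor a/q\rfloor$ is even, write $[a,b]=S_{0,q}[c,d]$ with $c,d$ divisible by $q$; Lemma~\ref{lemma2.4}(1) gives $t^{4q^2}[a,b]=t^{4q^2}\comp S_{0,q}[c,d]=S_{0,0}\comp t^{2q^2}[c,d]$, and by Theorem~\ref{theorem2.1} and Lemma~\ref{lemma2.5} the monomial $t^{2q^2}[c,d]$ is $[c+q,d]$ or earlier, so $t^{4q^2}[a,b]$ equals or precedes $[2c+2q,2d]=[a+2q,b-q]$, which precedes $[a,b+q]$ (smaller index sum). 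If $\lfloor a/q\rfloor$ is odd, write $[a,b]=S_{q,q}[c,d]$ with $c,d$ divisible by $q$; Lemma~\ref{lemma2.4}(2) gives $t^{4q^2}[a,b]=S_{q,0}\comp t^{2q^2}[c,d]$, again $t^{2q^2}[c,d]$ is $[c+q,d]$ or earlier, so $t^{4q^2}[a,b]$ equals or precedes $S_{q,0}[c+q,d]=[2c+3q,2d]=[a+2q,b-q]$, which precedes $[a,b+q]$.

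These are exactly the arguments already spelled out in Lemmas~\ref{lemma2.5} and~\ref{lemma2.6} under the extra hypothesis "$q\mid a$ and $q\mid b$"; the point of Theorem~\ref{theorem2.7} is simply to record that the restricted statements immediately imply the general ones once one observes that the condition "$\lfloor a/q\rfloor$ odd" is precisely the condition that the $2^{\nu}$-term (where $q=2^{\nu}$) occurs in the binary expansion of $a$, and that the decomposition $[a,b]=S_{i,j}[c,d]$ with $c,d$ divisible by $q$ is available whenever we know the low $q$-digits $i,j\in\{0,q\}$ of $a,b$. So in the write-up I would either (i) phrase Lemmas~\ref{lemma2.5}--\ref{lemma2.6} from the start for general $a,b$ with "$\lfloor a/q\rfloor$ odd" in place of "$a/q$ odd" — the induction on $a+b$ goes through verbatim — or (ii) deduce Theorem~\ref{theorem2.7} by noting the two hypotheses coincide for the $\mathbb Z/2$-digit patterns that matter, i.e. the general case with $i=j=0$ is Lemmas~\ref{lemma2.5}--\ref{lemma2.6}, and the remaining digit patterns are handled by the identical $S_{i,j}$-pushforward computations.

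The only genuine subtlety — and the step I would be most careful about — is the bookkeeping that turns "equals or precedes" at the level of $[c,d]$ into "precedes" at the level of $[a,b]$: one must check in each case that after applying the $S_{i,j}$ (which only preserves the relation "precedes or equals", not strict precedence) and landing at $[a+q,b-q]$ or $[a+2q,b-q]$, this target still strictly precedes the desired $[a+q,b]$ or $[a,b+q]$. That is immediate from Definition~\ref{def1.3}: $[a+q,b-q]$ has the same index sum as $[a+q,b]$... wait, it does not — so one uses instead that it has strictly smaller index sum than $[a+q,b]$? No: $a+q+b-q = a+b$ while $(a+q)+b = a+b+q$, so $[a+q,b-q]$ has index sum $a+b < a+b+q$ and precedes $[a+q,b]$; similarly $[a+2q,b-q]$ has index sum $a+b+q$ equal to that of $[a,b+q]$, and its second coordinate $b-q < b+q$, so it precedes. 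Checking these handful of index-sum/second-coordinate comparisons is the entire content of "the main obstacle", and it is routine.
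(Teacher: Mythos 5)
There is a genuine gap, and it is exactly at the step you yourself hesitate over. Your plan is to write an arbitrary $[a,b]$ as $S_{i,j}[c,d]$ with $q\mid c$, $q\mid d$ and $i,j\in\{0,q\}$. But such a decomposition exists only when $q$ already divides $a$ and $b$: if $a=2c+i$ with $q\mid c$ and $i\in\{0,q\}$, then $a$ is a multiple of $q$ (and in fact $a\bmod 2q\in\{0,q\}$). Concretely, take $q=4$, $a=5$, so $\lfloor a/q\rfloor=1$ is odd; then $5=2c+i$ with $i\in\{0,4\}$ has no integer solution for $c$, so $[5,b]$ is not in the image of any $S_{i,j}$. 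The operators $S_{i,j}$ with $i,j\in\{0,q\}$ map the "$q$ divides both coordinates" pairs to themselves and reach nothing else, so they cannot reduce the general case of Theorem~\ref{theorem2.7} to Lemmas~\ref{lemma2.5}--\ref{lemma2.6}. For the same reason, the induction of Lemma~\ref{lemma2.5} does not "go through verbatim" when you replace $a/q$ by $\lfloor a/q\rfloor$: the case $\lfloor a/q\rfloor$ and $\lfloor b/q\rfloor$ both odd still appeals to the $S_{q,q}$-decomposition, which is unavailable unless $q\mid a$ and $q\mid b$.

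The paper bridges the gap with a different decomposition, which is the idea you are missing. Write $a=a_1+a_2$ and $b=b_1+b_2$ with $q\mid a_1$, $q\mid b_1$ and $a_2,b_2\in[0,q)$. Because the binary digits of $a_1$ and $a_2$ (and of $b_1$ and $b_2$) do not overlap, $g(a)=g(a_1)+g(a_2)$ and $g(b)=g(b_1)+g(b_2)$. Applying Lemma~\ref{lemma2.5} to $[a_1,b_1]$ gives $t^{2q^2}[a_1,b_1]=[c_1,d_1]$ with $q\mid c_1$, $q\mid d_1$ and $[c_1,d_1]$ preceding $[a_1+q,b_1]$. Setting $c=c_1+a_2$, $d=d_1+b_2$, the same digit-additivity gives $g(c)=g(c_1)+g(a_2)$, $g(d)=g(d_1)+g(b_2)$, whence $t^{2q^2}[a,b]=[c,d]$. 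Finally, adding the fixed pair $(a_2,b_2)$ coordinatewise preserves the precedence relation of Definition~\ref{def1.3} (it shifts both the index sum and the second coordinate by the same amounts), so $[c,d]$ precedes $[a_1+q+a_2,b_1+b_2]=[a+q,b]$. This "append the low $q$-adic digits" map $[x,y]\mapsto[x+a_2,y+b_2]$, not the $S_{i,j}$, is the operator that actually carries the divisible case to the general one. Your bookkeeping of the index-sum comparisons at the end is correct, but it sits downstream of the flawed decomposition.
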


\begin{proof}
To prove (1), write $a$ as $a_{1}+a_{2}$, $b$ as $b_{1}+b_{2}$ where $q$ divides $a_{1}$ and $b_{1}$, and both $a_{2}$ and $b_{2}$ are in $[0,q)$. By Lemma \ref{lemma2.5}, $t^{2q^{2}}[a_{1},b_{1}]=[c_{1},d_{1}]$ where $q$ divides $c_{1}$ and $d_{1}$, and $[c_{1},d_{1}]$ precedes $[a_{1}+q,b_{1}]$. Now $g(a)=g(a_{1})+g(a_{2})$ while $g(b)=g(b_{1})+g(b_{2})$. If we let $c=c_{1}+a_{2}$, $d=d_{1}+b_{2}$, then $g(c)=g(c_{1})+g(a_{2})$, $g(d)=g(d_{1})+g(b_{2})$. It follows immediately that  $t^{2q^{2}}[a,b]=[c,d]$. And since $[c_{1},d_{1}]$ precedes $[a_{1}+q,b_{1}]$, $[c,d]$ precedes $[a+q,b]$. The proof of (2) is the same.\qed
\end{proof}

Combining Theorems \ref{theorem2.1} and \ref{theorem2.7} we get:

\begin{corollary}
\label{corollary2.8}
$t^{2q^{2}}[a,b]$ is $[a+q,b]$ or an earlier monomial, while $t^{4q^{2}}[a,b]$ is $[a,b+q]$ or an earlier monomial.
\end{corollary}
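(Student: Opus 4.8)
The plan is to establish Corollary~\ref{corollary2.8} by simply assembling the two halves of each statement from the results already proved in this section, splitting into the two cases governed by the parity of $\left\lfloor\frac{a}{q}\right\rfloor$ (respectively $\left\lfloor\frac{b}{q}\right\rfloor$). For the first assertion: if $\left\lfloor\frac{a}{q}\right\rfloor$ is even, Theorem~\ref{theorem2.1} gives the exact equality $t^{2q^{2}}[a,b]=[a+q,b]$; if $\left\lfloor\frac{a}{q}\right\rfloor$ is odd, Theorem~\ref{theorem2.7}(1) gives that $t^{2q^{2}}[a,b]$ strictly precedes $[a+q,b]$. In either case $t^{2q^{2}}[a,b]$ is $[a+q,b]$ or an earlier monomial, which is the claim. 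The second assertion is handled the same way, using Theorem~\ref{theorem2.1} (the statement about $\left\lfloor\frac{b}{q}\right\rfloor$ even) together with Theorem~\ref{theorem2.7}(2).

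So the proof is essentially a one-line case division: ``By Theorems~\ref{theorem2.1} and~\ref{theorem2.7}, according as $\left\lfloor\frac{a}{q}\right\rfloor$ is even or odd, $t^{2q^{2}}[a,b]$ is $[a+q,b]$ or a monomial preceding it; similarly for $t^{4q^{2}}[a,b]$ according to the parity of $\left\lfloor\frac{b}{q}\right\rfloor$.'' There is no genuine obstacle here — the corollary is a packaging statement, and all the substantive work (the inductions in Lemmas~\ref{lemma2.5} and~\ref{lemma2.6}, and the descent to the $q \mid a$, $q \mid b$ case in Theorem~\ref{theorem2.7}) has already been carried out. The only thing to be a little careful about is the wording ``or an earlier monomial'': it correctly covers both the equality case of Theorem~\ref{theorem2.1} and the strict-precedence case of Theorem~\ref{theorem2.7}, so no separate argument is needed to reconcile them.

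If anything deserves a remark, it is merely that these two statements are exactly the two ``multiplication operators'' one needs for the inductive steps sketched after Remark~\ref{remark1.4}: multiplication by $t^{2q^{2}}$ nudges the first coordinate up by $q$ (or earlier), and multiplication by $t^{4q^{2}}$ nudges the second coordinate up by $q$ (or earlier). Iterating Corollary~\ref{corollary2.8} will then control $t^{8q^{2}}=t^{2q^{2}}\comp t^{2q^{2}}\comp t^{2q^{2}}\comp t^{2q^{2}}$ and the other relevant powers, but that iteration belongs to section~\ref{section3}, not here. Hence the proof I would write is just the two-sentence synthesis above, with a \qed.

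\begin{proof}
Both assertions are immediate from Theorems \ref{theorem2.1} and \ref{theorem2.7}. If $\left\lfloor\frac{a}{q}\right\rfloor$ is even, Theorem \ref{theorem2.1} gives $t^{2q^{2}}[a,b]=[a+q,b]$; if $\left\lfloor\frac{a}{q}\right\rfloor$ is odd, Theorem \ref{theorem2.7}(1) gives that $t^{2q^{2}}[a,b]$ precedes $[a+q,b]$. So in all cases $t^{2q^{2}}[a,b]$ is $[a+q,b]$ or an earlier monomial. Likewise, according as $\left\lfloor\frac{b}{q}\right\rfloor$ is even or odd, Theorem \ref{theorem2.1} or Theorem \ref{theorem2.7}(2) shows that $t^{4q^{2}}[a,b]$ is $[a,b+q]$ or an earlier monomial.\qed
\end{proof}
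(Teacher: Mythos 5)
Your proof is correct and is exactly the paper's own argument: the paper states Corollary~\ref{corollary2.8} with the single prefatory remark ``Combining Theorems~\ref{theorem2.1} and~\ref{theorem2.7} we get,'' which is precisely the case-split on the parity of $\left\lfloor a/q\right\rfloor$ (respectively $\left\lfloor b/q\right\rfloor$) that you spell out.
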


\section{Theorems relating to the recursion $\bm (\star)$}
\label{section3}

We now complete the proof, outlined at the end of section \ref{section1}, of Theorem \ref{theorem3.1}. As we indicated, the argument is a 2 stage one. In the first stage we assume that $(1\gamma)$ holds for $n<8q^{2}$, and argue inductively on $k$ to show that $(1\gamma)$ holds for $k$ in $(8q^{2}, 16q^{2})$. Recall that $n=k-8q^{2}$, that $t^{n}=[a,b]$ and that $t^{k}=[a+2q, b]$. As we've seen, it suffices to show that $t^{8q^{2}}A_{n}$ and $t^{2q^{2}}A_{n+2q^{2}}$ are each sums of monomials preceding $[a+2q,b]$. Now since $n<8q^{2}$, $(1\gamma)$ holds for $n$, and $A_{n}$ is a sum of $[c,d]$ preceding $[a,b]$. By Corollary \ref{corollary2.8}, with $q$ replaced by $2q$, $t^{8q^{2}}[c,d]$ is $[c+2q,d]$ or an earlier monomial, and so precedes $[a+2q,b]$. Also, $t^{n+2q^{2}}=t^{2q^{2}}[a,b]$ which, by Corollary \ref{corollary2.8}, is $[a+q,b]$ or an earlier monomial.  Since $n+2q^{2}<k$, $(1\gamma)$ holds for $n+2q^{2}$, and $A_{n+2q^{2}}$ is a sum of $[c,d]$ preceding $[a+q,b]$. Corollary \ref{corollary2.8} shows that $t^{2q^{2}}[c,d]$ is $[c+q,d]$ or an earlier monomial.  Then $[c+q,d]$ precedes $[a+2q,b]$, and we're done. Turning to stage 2 we assume that $(1\gamma)$ holds for $n<16q^{2}$, and argue inductively on $k$ to show that $(1\gamma)$ holds for $k$ in $(16q^{2},32q^{2})$. Now $n=k-16q^{2}$, $t^{n}=[a,b]$ and $t^{k}=[a,b+2q]$ It sufices to show that $t^{16q^{2}}A_{n}$ and $t^{4q^{2}}A_{n+4q^{2}}$ are sums of monomials preceding $[a,b+2q]$, and we follow the argument of stage 1, once again using Corollary \ref{corollary2.8} repeatedly.  We have proved:

\begin{theorem}
\label{theorem3.1}
Suppose $A_{n}$, $n$ odd and $>0$, are in $tZ/2[t^{2}]$ and satisfy the recursion $A_{n+8r}=t^{8r}A_{n}+t^{2r}A_{n+2r}$ for some $r$ which is a power of $2$. If $A_{n}$ is a sum of monomials preceding $t^{n}$ whenever $n<8r^{2}$, then for all $n$, $A_{n}$ is such a sum.
\end{theorem}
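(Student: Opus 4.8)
The plan is to follow exactly the two-stage inductive scheme sketched at the end of Section~\ref{section1}, using the multiplication estimates of Section~\ref{section2} (specifically Corollary~\ref{corollary2.8}) as the sole analytic input. First I would invoke Remark~\ref{remark1.4}: since every one of $8r, 16r, 32r,\dots$ has the form $8q^2$ or $16q^2$ with $q$ a power of $2$, and since by Observation~\ref{observation1.1} the recursion $(\star)$ for $r$ implies it for every larger power of $2$, it suffices to prove the two implications (1) and (2) of Remark~\ref{remark1.4}: if $(1\gamma)$ holds for $n<8q^2$ then it holds for $n<16q^2$ (given $(\star)$ for $r=q^2$), and if $(1\gamma)$ holds for $n<16q^2$ then it holds for $n<32q^2$ (given $(\star)$ for $r=2q^2$). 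Combining these, and noting the hypothesis gives $(1\gamma)$ for $n<8r^2$ which covers the base case $n<8q^2$ when $q^2\le r$, an induction on the dyadic level completes the proof.

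For stage~1, I would argue by induction on $k\in(8q^2,16q^2)$ (odd), setting $n=k-8q^2$ and writing $t^n=[a,b]$. Since $n<8q^2$ one has $a<2q$, so $g(a+2q)=g(a)+4q^2$ and hence $t^k=t^{8q^2}[a,b]=[a+2q,b]$. By $(\star)$ with $r=q^2$, $A_k=t^{8q^2}A_n+t^{2q^2}A_{n+2q^2}$, so it is enough to show both summands are sums of monomials preceding $[a+2q,b]$. For the first: $(1\gamma)$ applies to $n$, so $A_n$ is a sum of $[c,d]$ preceding $[a,b]$; applying Corollary~\ref{corollary2.8} with $q$ replaced by $2q$, each $t^{8q^2}[c,d]$ equals or precedes $[c+2q,d]$, which precedes $[a+2q,b]$ (the preceding relation is preserved by $[x,y]\mapsto[x+2q,y]$ here because all the $c+d$ are small). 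For the second: $t^{n+2q^2}=t^{2q^2}[a,b]$ is $[a+q,b]$ or earlier by Corollary~\ref{corollary2.8}; since $n+2q^2<k$, the inductive hypothesis (or $(1\gamma)$) gives $A_{n+2q^2}$ as a sum of $[c,d]$ preceding $[a+q,b]$, and then $t^{2q^2}[c,d]$ is $[c+q,d]$ or earlier, which precedes $[a+2q,b]$. Stage~2 is the mirror image: take $k\in(16q^2,32q^2)$, $n=k-16q^2$, $t^n=[a,b]$ with $b<2q$ so $t^k=[a,b+2q]$; use $(\star)$ with $r=2q^2$ to write $A_k=t^{16q^2}A_n+t^{4q^2}A_{n+4q^2}$, and bound each summand using Corollary~\ref{corollary2.8} (for the first term with $q$ replaced by $2q$, noting $t^{16q^2}[c,d]$ equals or precedes $[c,d+2q]$) exactly as in stage~1.

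The only genuinely delicate point — and the place I expect to have to be careful — is the verification that the ``precedes'' relation is preserved under the relevant shifts $[c,d]\mapsto[c+2q,d]$, $[c,d]\mapsto[c,d+2q]$, and $[c,d]\mapsto[c+q,d]$ in the ranges that actually occur. This is not automatic for arbitrary monomials, but here every $[c,d]$ arising has $c+d$ bounded (because it comes from $A_n$ with $n$ in a controlled range, so $c<2q$ or $d<2q$ as appropriate), which is precisely what makes $g(c+2q)=g(c)+4q^2$ etc.\ hold, so that the shift translates into a uniform additive change in the exponent and hence respects the ordering. Keeping track of which coordinate is small at each step, and checking that Corollary~\ref{corollary2.8} is being applied with the right value of $q$ (namely $2q$ for the $t^{8q^2}$ and $t^{16q^2}$ multiplications, $q$ for the $t^{2q^2}$ and $t^{4q^2}$ ones), is the bookkeeping that the proof must get right; everything else is a direct transcription of the Section~\ref{section1} sketch.
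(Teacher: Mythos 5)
Your proposal follows the paper's own two-stage inductive argument exactly, with Corollary~\ref{corollary2.8} (applied with both $q$ and $2q$) as the only analytic input, and it is correct.

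One small clarification, since you flag it as ``the only genuinely delicate point'': the preservation of the ``precedes'' relation under $[c,d]\mapsto[c+2q,d]$, $[c,d]\mapsto[c+q,d]$, and $[c,d]\mapsto[c,d+2q]$ is \emph{unconditional}, not something that needs $c+d$ small. By Definition~\ref{def1.3}, $[c,d]$ precedes $[a,b]$ iff $c+d\le a+b$ with $d<b$ in case of equality; adding the same constant to both first (or both second) coordinates visibly preserves this. You are conflating this with a different fact: whether multiplication by $t^{8q^2}$ literally takes $[c,d]$ to $[c+2q,d]$ (which does require $\lfloor c/2q\rfloor$ even, cf.\ Theorem~\ref{theorem2.1}). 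But Corollary~\ref{corollary2.8} is designed precisely so that you never need that literal equality: it says $t^{8q^2}[c,d]$ is $[c+2q,d]$ \emph{or earlier}, unconditionally. So once you chain ``$[c,d]$ precedes $[a,b]$, hence $[c+2q,d]$ precedes $[a+2q,b]$, and $t^{8q^2}[c,d]$ is $[c+2q,d]$ or earlier,'' there is no boundedness condition to verify; the worry in your last paragraph can be dropped.
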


\begin{corollary}
\label{corollary3.2}
Let $\gamma_{n}$ be defined by $\gamma_{n+8}=t^{8}\gamma_{n}+t^{2}\gamma_{n+2}$, $\gamma_{1}=0$, $\gamma_{3}=0$, $\gamma_{5}=t^{3}$, $\gamma_{7}=t^{5}$. Then, for all $n$, $\gamma_{n}$ is a sum of monomials preceding $t^{n}$.
\end{corollary}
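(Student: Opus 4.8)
The plan is to apply Theorem \ref{theorem3.1} directly, so the only real work is checking that the initial conditions of the corollary fit the hypothesis of the theorem. Here $r=1$ (a power of $2$), and the recursion $\gamma_{n+8}=t^{8}\gamma_{n}+t^{2}\gamma_{n+2}$ is exactly $(\star)$ with $r=1$, which reads $A_{n+8r}=t^{8r}A_{n}+t^{2r}A_{n+2r}$. Since $8r^{2}=8$ when $r=1$, the hypothesis ``$A_{n}$ is a sum of monomials preceding $t^{n}$ whenever $n<8r^{2}$'' becomes a condition on $n\in\{1,3,5,7\}$, and I would verify it for each of the four given initial values.

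First I would confirm $\gamma_{n}\in tZ/2[t^{2}]$ for the base cases: $\gamma_{1}=0$, $\gamma_{3}=0$, $\gamma_{5}=t^{3}$, $\gamma_{7}=t^{5}$ are all in $tZ/2[t^{2}]$, and this property is preserved by the recursion since $t^{8}$ and $t^{2}$ have even exponent. Next I would check condition $(1\gamma)$ for $n<8$. For $n=1$ and $n=3$, $\gamma_{n}=0$, which is vacuously a sum of monomials preceding $t^{n}$. For $n=5$, I need $t^{3}$ to precede $t^{5}$: writing $t^{3}=[a,b]$ means $3=1+2g(a)+4g(b)$, so $g(a)=1$, $g(b)=0$, giving $[1,0]$; writing $t^{5}=[a,b]$ means $5=1+2g(a)+4g(b)$, so $g(a)=0$, $g(b)=1$, giving $[0,1]$. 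Since $1+0=0+1$ and $0<1$, by Definition \ref{def1.3} $[1,0]$ precedes $[0,1]$, so $t^{3}$ precedes $t^{5}$. For $n=7$, I need $t^{5}$ to precede $t^{7}$: $t^{5}=[0,1]$ as just computed, and $7=1+2g(a)+4g(b)$ forces $g(a)=1$, $g(b)=1$, i.e. $[1,1]$; since $0+1<1+1$, $[0,1]$ precedes $[1,1]$, so $t^{5}$ precedes $t^{7}$.

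With $(1\gamma)$ verified for all $n<8=8r^{2}$, Theorem \ref{theorem3.1} immediately yields that $\gamma_{n}$ is a sum of monomials preceding $t^{n}$ for all $n$, which is the assertion of the corollary. There is no real obstacle here: the corollary is purely a matter of matching the given data to the theorem's hypotheses, the only mildly fiddly point being the explicit decoding of the monomials $t^{3},t^{5},t^{7}$ into their $[a,b]$ coordinates in order to check the precedence relation, which is a short finite computation using Definition \ref{def1.2} and Definition \ref{def1.3}.
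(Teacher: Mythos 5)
Your proposal is correct and matches the paper's proof exactly: the paper also simply notes that $t^{3}$, $t^{5}$, $t^{7}$ are $[1,0]$, $[0,1]$, $[1,1]$ and invokes Theorem \ref{theorem3.1}, which you have fleshed out with the explicit decoding and precedence checks. No gaps or differences in approach.
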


For $t^{3}$, $t^{5}$ and $t^{7}$ are $[1,0]$, $[0,1]$ and $[1,1]$, so the hypotheses of Theorem \ref{theorem3.1} apply.

We now introduce 3 variants, $(1\beta)$, $(1\delta)$ and $(1\alpha)$ of $(1\gamma)$, and derive parallel results for them.

\begin{enumerate}
\item[$(1\beta)$] $A_{n}$ is $t^{n}\ +$ a sum of earlier monomials.
\item[$(1\delta)$] Suppose $t^{n}=[a,b]$. If $a>0$, $A_{n}$ is a sum of monomials preceding $[a-1,b]$. If $a=0$, $A_{n}$ is a sum of monomials equal to or preceding $[0,b-1]$.
\item[$(1\alpha)$] Suppose $t^{n}=[a,b]$. If $a>0$, $A_{n}=[a-1,b]+$  a sum of earlier monomials. If $a=0$, $A_{n}$ is a sum of monomials equal to or preceding $[0,b-1]$.
\end{enumerate}

\begin{theorem}
\label{theorem3.3}
Suppose $A_{n}$ are as in Theorem \ref{theorem3.1}. If $A_{n}$ satisfies $(1\beta)$ whenever $n<8r^{2}$, every $A_{n}$ satisfies $(1\beta)$.
\end{theorem}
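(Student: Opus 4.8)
The plan is to mimic the two-stage induction used for Theorem \ref{theorem3.1}, exploiting Observation \ref{observation1.1} to replace $r$ by $q^2$ (and then $2q^2$), and then applying Corollary \ref{corollary2.8} to each monomial occurring in the relevant $A_m$'s. The only new feature is that $(1\beta)$ is an \emph{exact} statement about the leading term $t^n$ rather than a one-sided inequality, so I must track exactly which of the two summands $t^{8q^2}A_n$ and $t^{2q^2}A_{n+2q^2}$ contributes the dominant monomial $t^k$, show the other summand only contributes earlier monomials, and check the dominant monomial is not cancelled.

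First I would handle stage one. Take $k\in(8q^2,16q^2)$, set $n=k-8q^2$, write $t^n=[a,b]$ with $a<2q$, so that $t^k=[a+2q,b]$ and also $t^{n+2q^2}=t^{2q^2}[a,b]$. Since $\lfloor a/q\rfloor$ is $0$ or $1$, there are two cases. If $\lfloor a/q\rfloor$ is even, then by Theorem \ref{theorem2.1} (with $q$ replaced by $2q$) $t^{8q^2}[a,b]=[a+2q,b]=t^k$, while $t^{2q^2}[a,b]=[a+q,b]$ is an earlier monomial; by the inductive hypothesis $A_n=t^n+(\text{earlier})$ and $A_{n+2q^2}$ has leading monomial $t^{n+2q^2}=[a+q,b]$, and Corollary \ref{corollary2.8} shows every monomial of $t^{8q^2}A_n$ other than $t^k$ precedes $[a+2q,b]$, and every monomial of $t^{2q^2}A_{n+2q^2}$ equals or precedes $[a+2q,b]$ — but never equals $t^k=[a+2q,b]$ because its leading monomial is $t^{2q^2}[a+q,b]$, which by Corollary \ref{corollary2.8} is $[a+2q,b]$ or earlier, and one checks it is strictly earlier when $\lfloor a/q\rfloor$ is even (using Lemma \ref{lemma2.5}/Theorem \ref{theorem2.7} applied to the index $\lfloor(a+q)/q\rfloor$, which is odd). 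Hence $t^k$ appears exactly once in $A_k=t^{8q^2}A_n+t^{2q^2}A_{n+2q^2}$, and all other monomials are earlier, i.e. $(1\beta)$ holds for $k$. If $\lfloor a/q\rfloor$ is odd, the roles swap: $t^{8q^2}A_n$ contributes only monomials strictly preceding $t^k$ (by Theorem \ref{theorem2.7}(1) with $q\mapsto 2q$, since $\lfloor a/(2q)\rfloor$ then has the parity making $t^{8q^2}[a,b]$ precede $[a+2q,b]$ — here one must be slightly careful, but $a<2q$ forces $\lfloor a/2q\rfloor=0$, so actually $t^{8q^2}[a,b]=[a+2q,b]$ always; the genuine split is instead whether $t^{2q^2}[a,b]$ equals or precedes $[a+q,b]$), and $t^{2q^2}A_{n+2q^2}$ supplies $t^k$ exactly once. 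I would organize this as: in all cases $t^{8q^2}A_n=t^k+(\text{earlier monomials})$ by Theorem \ref{theorem2.1} and Corollary \ref{corollary2.8}; and $t^{2q^2}A_{n+2q^2}$ is a sum of monomials all strictly preceding $t^k$, because its leading term $t^{2q^2}\cdot t^{n+2q^2}=t^{2q^2}\cdot t^{2q^2}[a,b]$ — writing $n+2q^2=[a',b']$ — equals $t^{4q^2}[a,b]$-type data that Corollary \ref{corollary2.8} pins below $[a+2q,b]$. Adding gives $(1\beta)$ for $k$.

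Stage two is entirely parallel: for $k\in(16q^2,32q^2)$ set $n=k-16q^2$, $t^n=[a,b]$ with $b<2q$, so $t^k=[a,b+2q]$ and $t^{n+4q^2}=t^{4q^2}[a,b]$. By Theorem \ref{theorem2.1} (with $q\mapsto 2q$) $t^{16q^2}[a,b]=[a,b+2q]$, so $t^{16q^2}A_n=t^k+(\text{earlier})$ by Corollary \ref{corollary2.8}; and $t^{4q^2}A_{n+4q^2}$ has leading monomial $t^{4q^2}\cdot t^{4q^2}[a,b]$, which Corollary \ref{corollary2.8} and Lemma \ref{lemma2.6} place strictly before $[a,b+2q]$, so this summand contributes only earlier monomials. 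Hence $t^k$ survives exactly once and $(1\beta)$ holds for $k$. Finally, Observation \ref{observation1.1} lets us assume the given $r$ is $q^2$ in stage one and $2q^2$ in stage two (any $8r'$ with $r'$ a power of $2$ is $8q^2$ or $16q^2$ for suitable $q$), so the two stages together upgrade the hypothesis ``$(1\beta)$ for $n<8r^2$'' — wait, rather ``$(1\beta)$ for $n<8q^2$'' — to ``$(1\beta)$ for all $n$,'' exactly as in Theorem \ref{theorem3.1}.

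The main obstacle is the cancellation bookkeeping: unlike the ``$\preceq$'' statement $(1\gamma)$, here I must certify that the dominant monomial $t^k$ is produced by exactly one of the two summands and is not killed by a stray copy coming from the other. This reduces to showing that the ``wrong'' summand's \emph{leading} monomial is \emph{strictly} earlier than $t^k$ — never equal — which is precisely the strict-precedence half of Theorem \ref{theorem2.7}/Lemmas \ref{lemma2.5}–\ref{lemma2.6} applied to the correct floor-parity. I would isolate this as a short lemma (``if $\lfloor a/q\rfloor$ is odd then $t^{2q^2}[a,b]$ \emph{strictly} precedes $[a+q,b]$, and likewise for $t^{4q^2}$''), which is already contained in Theorem \ref{theorem2.7}, and then the rest is routine.
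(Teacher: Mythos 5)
Your proof is correct and follows the paper's two-stage induction closely. You correctly pin down that $t^{8q^{2}}A_{n}=t^{k}+(\text{earlier})$ always (since $a<2q$ forces $\lfloor a/(2q)\rfloor=0$, so Theorem \ref{theorem2.1} with $q$ replaced by $2q$ applies exactly), and that the only subtlety is ruling out a stray copy of $t^{k}$ arising from $t^{2q^{2}}A_{n+2q^{2}}$; your parity analysis of $\lfloor(a+q)/q\rfloor$ via Theorem \ref{theorem2.7} is exactly the paper's argument. Your ``organizing'' observation that $t^{2q^{2}}\cdot t^{n+2q^{2}}=t^{4q^{2}}[a,b]$, which by Corollary \ref{corollary2.8} is $[a,b+q]$ or earlier and therefore strictly precedes $[a+2q,b]$, is in fact a slightly cleaner route to controlling the image of the leading monomial than the paper's parity step --- but be careful that this bounds only the image of the leading monomial, not the whole of $t^{2q^{2}}A_{n+2q^{2}}$, because multiplication by $t^{2q^{2}}$ need not preserve the code order; you must still argue (as your earlier detailed casework does, but your summary sentence elides) that every non-leading $[c,d]$ in $A_{n+2q^{2}}$ strictly precedes $[a+q,b]$, so $t^{2q^{2}}[c,d]$ is $[c+q,d]$ or earlier and hence strictly precedes $[a+2q,b]$. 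The momentary confusion about ``roles swapping'' when $\lfloor a/q\rfloor$ is odd is self-corrected and harmless, and stage two goes through in parallel just as you say.
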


\begin{corollary}
\label{corollary3.4}
Let $\beta_{n}$ be defined by $\beta_{n+8}=t^{8}\beta_{n}+t^{2}\beta_{n+2}$, $\beta_{1}=t$, $\beta_{3}=t^{3}$, $\beta_{5}=t^{5}$, $\beta_{7}=t^{7}+t^{3}$. Then, for all $n$, $\beta_{n}$ is $t^{n}\ +$ a sum of earlier monomials.
\end{corollary}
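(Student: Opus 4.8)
The plan is to obtain this from Theorem \ref{theorem3.3} in exactly the way Corollary \ref{corollary3.2} was obtained from Theorem \ref{theorem3.1}. The $\beta_n$ are indexed by odd $n>0$, lie in $tZ/2[t^2]$---the initial values $t,t^3,t^5,t^7+t^3$ do, and the recursion $\beta_{n+8}=t^8\beta_n+t^2\beta_{n+2}$ preserves this---and satisfy $(\star)$ with $r=1$. So they are ``as in Theorem \ref{theorem3.1}'', and Theorem \ref{theorem3.3} reduces the assertion to checking that $\beta_n$ satisfies $(1\beta)$ for every odd $n$ with $n<8r^2=8$, i.e. for $n\in\{1,3,5,7\}$.

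So the real content is identifying the codes of the monomials involved and verifying $(1\beta)$ in these four cases. Using $n=1+2g(a)+4g(b)$ with $g(0)=0$ and $g(1)=1$, one has $t=[0,0]$, $t^3=[1,0]$, $t^5=[0,1]$, $t^7=[1,1]$. For $n=1,3,5$ we have $\beta_n=t^n$ on the nose, so $(1\beta)$ holds with an empty sum of earlier monomials. The single case with content is $n=7$: here $\beta_7=t^7+t^3=[1,1]+[1,0]$, and $[1,0]$ precedes $[1,1]$ since $1+0<1+1$, which is precisely the strict-inequality clause of Definition \ref{def1.3}. Hence $\beta_7$ is $t^7$ plus an earlier monomial, so $(1\beta)$ holds for $n=7$ as well.

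Having verified the four base cases, Theorem \ref{theorem3.3} yields $(1\beta)$ for all $n$, which is the claim. I do not expect a genuine obstacle at the level of the corollary---all the substance sits inside Theorem \ref{theorem3.3} (proved, just as Theorem \ref{theorem3.1}, by a two-stage induction resting on Corollary \ref{corollary2.8}). The only point needing a second's attention is the base case $n=7$, where one must be sure that $t^3$ really is earlier than $t^7$ in the Nicolas--Serre ordering; Definition \ref{def1.3} settles this immediately.
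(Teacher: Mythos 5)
Your proposal is correct and is exactly the argument the paper intends: identify the Nicolas--Serre codes of $t,t^{3},t^{5},t^{7}$ as $[0,0],[1,0],[0,1],[1,1]$, check $(1\beta)$ for $n=1,3,5,7$ (the only nontrivial check being that $t^{3}=[1,0]$ precedes $t^{7}=[1,1]$), and then invoke Theorem \ref{theorem3.3} with $r=1$. This mirrors the one-line justification the paper gives for Corollary \ref{corollary3.2}.
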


The proof of Theorem \ref{theorem3.3} uses the same 2 step process. Suppose we're in the first stage and $k$ is in $(8r^{2}, 16r^{2})$. Again, $n=k-8q^{2}$, $t^{n}=[a,b]$ and $t^{k}=[a+2q, b]$. Also $A_{k}=t^{8q^{2}}A_{n} + t^{2q^{2}}A_{n+2q^{2}}$. So it suffices to show that $t^{8q^{2}}A_{n}$ is $[a+2q,b]\ +$ a sum of earlier monomials, while $t^{2q^{2}}A_{n+2q^{2}}$ is a sum of monomials preceding $[a+2q,b]$. The first of these is easy---$A_{n}$ is $[a,b]\ +$ a sum of earlier monomials $[c,d]$, $t^{8q^{2}}[a,b]=[a+2q,b]$ and $t^{8q^{2}}[c,d]=[c+2q,d]$, which precedes $[a+2q, b]$. For the second we use Corollary \ref{corollary2.8} to see that $t^{n+2q^{2}}$ is $[a+q,b]$ or an earlier monomial. The induction then shows that $A_{n+2q^{2}}$ is either a sum of $[c,d]$ preceding $[a+q,b]$ or $[a+q,b]+$ such a sum. In the first case each $t^{2q^{2}}A_{n+2q^{2}}$ is $[c+q,d]$ or an earlier monomial and so precedes $[a+2q,b]$. Now in the second case we must have $t^{2q^{2}}[a,b]=[a+q,b]$; Theorem \ref{theorem2.7} tells us that this only occurs when $\left\lfloor\frac{a}{q}\right\rfloor$ is even. But when this happens, $\left\lfloor\frac{a+q}{q}\right\rfloor$ is odd, so by Theorem \ref{theorem2.7}, $t^{2q^{2}}[a+q,b]$ precedes $[a+2q,b]$. So in all cases $t^{2q^{2}}A_{n+2q^{2}}$ is a sum of the desired sort. The argument in the second stage is almost identical.  Now $n=k-16q^{2}$, $t^{n}=[a,b]$ and $t^{k}=[a,b+2q]$.  Also $A_{k}=t^{16q^{2}}A_{n}+t^{4q^{2}}A_{n+4q^{2}}$. So it suffices to show that $t^{16q^{2}}A_{n}$ is $[a,b+2q]\ +$ a sum of earlier monomials, and that $t^{4q^{2}}A_{n+4q^{2}}$ is a sum of monomials preceding $[a,b+2q]$. The argument to establish these facts is the same as that used in stage 1, again employing Corollary \ref{corollary2.8} and Theorem \ref{theorem2.7}.

\begin{theorem}
\label{theorem3.5}
Suppose $A_{n}$ are as in Theorem \ref{theorem3.1}. If $A_{n}$ satisfies $(1\delta)$ whenever $n<8r^{2}$, every $A_{n}$ satisfies $(1\delta)$.
\end{theorem}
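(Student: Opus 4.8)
The plan is to follow the exact same two-stage induction scheme that proved Theorems \ref{theorem3.1} and \ref{theorem3.3}, now specialized to the condition $(1\delta)$. Write $q$ for a power of $2$ with $8r^2 = 8q^2$ (so $r = q^2$ after applying Observation \ref{observation1.1}), and argue by induction on $k$. In stage one, $k \in (8q^2, 16q^2)$, set $n = k - 8q^2$, write $t^n = [a,b]$; since $n < 8q^2$ we get $a < 2q$, so $t^k = [a+2q, b]$ and $A_k = t^{8q^2}A_n + t^{2q^2}A_{n+2q^2}$. The goal is to show $A_k$ satisfies $(1\delta)$ with respect to $[a+2q, b]$, i.e. (since $a + 2q > 0$) that $A_k$ is a sum of monomials preceding $[a+2q-1, b]$.

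The key structural point, which I expect to be the only subtle step, is how the ``shifted dominant term'' $[a-1,b]$ (or $[0,b-1]$ when $a=0$) behaves under multiplication by $t^{8q^2}$ and $t^{2q^2}$. For the $t^{8q^2}A_n$ piece: by the induction hypothesis $(1\delta)$ for $n$, $A_n$ is a sum of $[c,d]$ preceding $[a-1,b]$ when $a>0$ (resp. equal to or preceding $[0,b-1]$ when $a=0$). Applying Corollary \ref{corollary2.8} with $q$ replaced by $2q$, each $t^{8q^2}[c,d]$ is $[c+2q,d]$ or earlier; one then checks that $[c+2q,d]$ precedes $[a+2q-1,b]$ (when $a>0$, because $[c,d]$ precedes $[a-1,b]$ and $S_{?,?}$-type shifts preserve precedence; when $a=0$ one needs that $[0,b-1]$ maps under $t^{8q^2}$ to something preceding $[2q-1, b]$, which follows since $t^{8q^2}[0,b-1] = [2q, b-1]$ by Theorem \ref{theorem2.1} as $\lfloor 0/2q\rfloor = 0$ is even, and $[2q,b-1]$ precedes $[2q-1,b]$). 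For the $t^{2q^2}A_{n+2q^2}$ piece: by Corollary \ref{corollary2.8}, $t^{n+2q^2} = t^{2q^2}[a,b]$ is $[a+q,b]$ or earlier, and since $n + 2q^2 < k$ the induction gives $(1\delta)$ for $n + 2q^2$; combining Corollary \ref{corollary2.8} again (applied to the dominant monomial of $A_{n+2q^2}$) shows $t^{2q^2}A_{n+2q^2}$ is a sum of monomials preceding $[a+2q-1,b]$. The one place requiring care, exactly as in the proof of Theorem \ref{theorem3.3}, is when $t^{2q^2}[a,b] = [a+q,b]$ exactly — by Theorem \ref{theorem2.7} this forces $\lfloor a/q\rfloor$ even, hence $\lfloor (a+q)/q\rfloor$ odd, so a further application of Theorem \ref{theorem2.7} gives the strict precedence that rescues the bound. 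A short separate check handles the boundary sub-case where the relevant first coordinate is $0$.

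Stage two is essentially identical: for $k \in (16q^2, 32q^2)$ put $n = k - 16q^2$, $t^n = [a,b]$, now $b < 2q$ so $t^k = [a, b+2q]$ and $A_k = t^{16q^2}A_n + t^{4q^2}A_{n+4q^2}$. Since the second coordinate is being increased, $[a,b+2q]$ always has positive first coordinate unless $a = 0$, and the $(1\delta)$ target is ``preceding $[a-1, b+2q]$'' (or, if $a=0$, ``equal to or preceding $[0, b+2q-1]$''). One runs the same argument: Corollary \ref{corollary2.8} (with $q \to 2q$ for the $t^{16q^2}$ factor, and directly for the $t^{4q^2}$ factor) together with the induction hypothesis, and Theorem \ref{theorem2.7} to get strict precedence in the borderline case $t^{4q^2}[a,b] = [a, b+q]$, which by Theorem \ref{theorem2.7} occurs only when $\lfloor b/q\rfloor$ is even, making $\lfloor (b+q)/q\rfloor$ odd. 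Chaining stage one and stage two and invoking Observation \ref{observation1.1} to reduce the general $r$ to $r = q^2$ completes the proof. The main obstacle — really the only non-mechanical part — is bookkeeping the behavior of the $a=0$ (and analogously $b=0$) boundary cases in $(1\delta)$, where the dominant term is prescribed as $[0,b-1]$ rather than $[a-1,b]$, and verifying that the displacement operators carry this prescription correctly through both multiplications; everything else is a direct transcription of the Theorem \ref{theorem3.1}/\ref{theorem3.3} template using Corollary \ref{corollary2.8} and Theorem \ref{theorem2.7}.
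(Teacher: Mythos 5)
Your overall framework---two-stage induction on $k$, writing $n = k - 8q^2$ (resp.\ $n = k - 16q^2$), splitting $A_k$ into the two terms given by $(\star)$, and applying Corollary \ref{corollary2.8}---is exactly the paper's approach, and when $a>0$ your argument correctly reduces to a transcription of the Theorem \ref{theorem3.1} argument with every bound $[a,b]$, $[a+q,b]$, $[a+2q,b]$ replaced by $[a-1,b]$, $[a+q-1,b]$, $[a+2q-1,b]$. Two points deserve attention, though.

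First, the appeal to Theorem \ref{theorem2.7} and the parity argument is unnecessary here, and the paper's proof of Theorem \ref{theorem3.5} does not use it. The reason is built into $(1\delta)$: it asserts that $A_{n+2q^2}$ is a sum of monomials \emph{strictly preceding} $[a+q-1,b]$ (there is no ``dominant monomial of $A_{n+2q^2}$'' under $(1\delta)$, so the phrase is a misreading). Thus for every $[c,d]$ actually appearing in $A_{n+2q^2}$, Corollary \ref{corollary2.8} gives $t^{2q^2}[c,d]$ equal to or preceding $[c+q,d]$, which in turn strictly precedes $[a+2q-1,b]$---there is no borderline monomial that can hit the target. The parity argument via Theorem \ref{theorem2.7} is needed only in Theorems \ref{theorem3.3} and \ref{theorem3.7}, where the conditions $(1\beta)$ and $(1\alpha)$ permit the extreme monomial $[a+q,b]$ (resp.\ $[a+q-1,b]$) itself to occur in $A_{n+2q^2}$, and then one must rule out $t^{2q^2}$ of it landing exactly on the forbidden spot. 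Invoking Theorem \ref{theorem2.7} in the $(1\delta)$ setting does no logical damage, but it signals a confusion about what $(1\delta)$ asserts; the one-unit slack in $[a-1,b]$ is precisely what makes Theorem \ref{theorem3.5} the simplest of the three variants.

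Second, the part of the proof that is genuinely new relative to Theorem \ref{theorem3.1}---and which the paper devotes the bulk of its space to---is the boundary case $a=0$, which you dispatch in a single phrase (``a short separate check''). In stage 1 with $a=0$ one has $t^n = [0,b]$, $t^k = [2q,b]$, and the target is a sum preceding $[2q-1,b]$. You correctly observe that $t^{8q^2}[c,d]$ with $[c,d]$ equal to or preceding $[0,b-1]$ is equal to or precedes $[2q,b-1]$, which precedes $[2q-1,b]$. But you omit the $t^{2q^2}A_{n+2q^2}$ piece: one must note that $t^{n+2q^2} = [q,b]$ \emph{exactly} (Theorem \ref{theorem2.1}, since $\lfloor 0/q\rfloor$ is even), that $(1\delta)$ for $n+2q^2$ (now with positive first coordinate $q$) gives $A_{n+2q^2}$ as a sum of $[c,d]$ preceding $[q-1,b]$, and that $t^{2q^2}[c,d]$ is then $[c+q,d]$ or earlier and hence precedes $[2q-1,b]$. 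Stage 2 with $a=0$ has its own wrinkle: one knows only that $t^{n+4q^2}$ is $[0,b+q]$ \emph{or earlier}, and must observe that either way the induction hypothesis bounds $A_{n+4q^2}$ by $[0,b+q-1]$, so that $t^{4q^2}[c,d]$ is $[c,d+q]$ or earlier and thus precedes $[0,b+2q-1]$. These checks are short, but they are the only part of the proof that is not a mechanical replay of Theorem \ref{theorem3.1}, and the write-up should carry them out explicitly.
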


\begin{corollary}
\label{corollary3.6}
Let $\delta_{n}$ be defined by $\delta_{n+8}=t^{8}\delta_{n}+t^{2}\delta_{n+2}$, $\delta_{1}=\delta_{3}=\delta_{5}=0$, $\delta_{7}=t^{3}$. Then if $t^{n}=[a,b]$, $a>0$, $\delta_{n}$ is a sum of monomials preceding $[a-1,b]$.  And if $t^{n}=[0,b]$, $\delta_{n}$ is a sum of monomials equal to or preceding $[0,b-1]$.
\end{corollary}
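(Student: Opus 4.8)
The plan is to obtain Corollary~\ref{corollary3.6} as the special case $r=1$ of Theorem~\ref{theorem3.5}. With $r=1$ (a power of $2$) the recursion $\delta_{n+8}=t^{8}\delta_{n}+t^{2}\delta_{n+2}$ is exactly $(\star)$, and since $\delta_{1},\delta_{3},\delta_{5},\delta_{7}$ lie in $tZ/2[t^{2}]$ and multiplication by $t^{8}$ and $t^{2}$ preserves that ring, all the $\delta_{n}$ lie in $tZ/2[t^{2}]$. Thus the $\delta_{n}$ are as in Theorem~\ref{theorem3.1}, and Theorem~\ref{theorem3.5} reduces the corollary to checking that $\delta_{n}$ satisfies $(1\delta)$ for $n<8r^{2}=8$, i.e.\ for $n=1,3,5,7$; once that is verified the theorem yields $(1\delta)$ for all $n$, which is precisely the two-part assertion of the corollary.

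To carry out the four checks I first record the codes of the small odd monomials. From $n=1+2g(a)+4g(b)$ with $g(0)=0$ and $g(1)=1$ one gets $t^{1}=[0,0]$, $t^{3}=[1,0]$, $t^{5}=[0,1]$, $t^{7}=[1,1]$. Now: for $n=1$, $[a,b]=[0,0]$ has $a=0$, and $(1\delta)$ requires $\delta_{1}$ to be a sum of monomials equal to or preceding $[0,-1]$, i.e.\ the empty sum, matching $\delta_{1}=0$. For $n=3$, $[a,b]=[1,0]$ has $a=1>0$, and $(1\delta)$ requires $\delta_{3}$ to be a sum of monomials preceding $[a-1,b]=[0,0]$; nothing precedes $[0,0]$, so this forces $\delta_{3}=0$, as prescribed. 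For $n=5$, $[a,b]=[0,1]$ has $a=0$, and $(1\delta)$ requires $\delta_{5}$ to be a sum of monomials equal to or preceding $[0,0]$; the prescribed value $\delta_{5}=0$ qualifies. For $n=7$, $[a,b]=[1,1]$ has $a=1>0$, and $(1\delta)$ requires $\delta_{7}$ to be a sum of monomials preceding $[a-1,b]=[0,1]$; the monomials preceding $[0,1]$ are $[0,0]$ and $[1,0]$, and $\delta_{7}=t^{3}=[1,0]$ is such a sum.

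There is no real obstacle here; the statement is a direct specialization of Theorem~\ref{theorem3.5}, and the only thing to watch is the reading of $(1\delta)$ at the minimal codes. When $b=0$ (as for $n=1$) the second clause of $(1\delta)$ speaks of monomials equal to or preceding the non-existent $[0,-1]$, which is the empty set; and when $[a-1,b]=[0,0]$ (as for $n=3$) the first clause speaks of monomials preceding $[0,0]$, again the empty set. In both degenerate situations $(1\delta)$ simply asserts that the corresponding $\delta_{n}$ vanishes, which is exactly what the initial data provides. With the four initial verifications complete, Theorem~\ref{theorem3.5} propagates $(1\delta)$ to every $n$, proving the corollary.
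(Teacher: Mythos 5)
Your proposal is correct and matches the paper's (implicit) proof exactly: the corollary is the case $r=1$ of Theorem~\ref{theorem3.5}, and the only content is verifying $(1\delta)$ for $n=1,3,5,7$ using $t^{1}=[0,0]$, $t^{3}=[1,0]$, $t^{5}=[0,1]$, $t^{7}=[1,1]$, which you do accurately, including the degenerate readings of $(1\delta)$ at $[0,-1]$ and $[0,0]$.
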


We use the 2 stage process to prove Theorem \ref{theorem3.5}. Suppose we're in the first stage. Again, $k$ is in $(8q^{2}, 16q^{2})$, $n=k-8q^{2}$, $t^{n}=[a,b]$, and $t^{k}=[a+2q, b]$.  When $a>0$ we argue exactly as in the proof of Theorem \ref{theorem3.1}. Suppose $a=0$, so that $t^{n}=[0,b]$, and  $t^{k}=[2q, b]$. It's enough to show that $t^{8q^{2}}A_{n}$ and $t^{2q^{2}}A_{n+2q^{2}}$ are each sums of monomials preceding $[2q-1,b]$. Since $n<8q^{2}$, $A_{n}$ satisfies $(1\delta)$, and is a sum of $[c,d]$ equal to or preceding $[0,b-1]$. Furthermore, $t^{8q^{2}}[c,d]$ is $[c+2q,d]$ or an earlier monomial, and so is equal to or precedes $[2q,b-1]$ which in turn precedes $[2q-1,b]$. Also, $t^{n+2q^{2}}=[q,b]$. Since $n+2q^{2}<k$, $A_{n+2q^{2}}$ satisfies $(1\delta)$, and is a sum of $[c,d]$ preceding $[q-1,b]$. Then $t^{2q^{2}}[c,d]$ is equal to or precedes $[c+q,d]$ and so precedes $[2q-1,b]$. We turn to stage 2. Now $k$ is in $(16q^{2},32q^{2})$, $n=k-16q^{2}$, $t^{n}=[a,b]$, $t^{k}=[a,b+2q]$. When $a>0$, the argument is just as in Theorem \ref{theorem3.1}. Suppose $a=0$, so that $t^{n}=[0,b]$, $t^{n+4q^{2}}$ is $[0,b+q]$ or an earlier monomial, $t^{k}=[0,b+2q]$.  It will suffice to show that $t^{16q^{2}}A_{n}$ and $t^{4q^{2}}A_{n+4q^{2}}$ are each sums of monomials equal to or preceding $[0,b+2q-1]$. Now since $n<16q^{2}$, $A_{n}$ satisfies $(1\delta)$ and is a sum of $[c,d]$ equal to or preceding $[0,b-1]$. Then $t^{16q^{2}}[c,d]$ is $[c,d+2q]$ or an earlier monomial and so is equal to or precedes $[0,b+2q-1]$. Also $n+4q^{2}<k$, and our induction shows that $A_{n+4q^{2}}$ is a sum of $[c,d]$ preceding $[0,b+q-1]$ (both when $t^{4q^{2}}$ is $[0,b+q]$ and when it is earlier). Then  $t^{4q^{2}}[c,d]$ is $[c,d+q]$ or an earlier monomial, and so precedes $[0,b+2q-1]$, completing the proof.

\begin{theorem}
\label{theorem3.7}
Suppose $A_{n}$ are as in Theorem \ref{theorem3.1}. If $A_{n}$ satisfies $(1\alpha)$ whenever $n<8r^{2}$, every $A_{n}$ satisfies $(1\alpha)$.
\end{theorem}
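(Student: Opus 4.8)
The plan is to run, once more, the two-stage induction used for Theorems \ref{theorem3.1}, \ref{theorem3.3} and \ref{theorem3.5}; note that $(1\alpha)$ is the ``dominant-term'' sharpening of $(1\delta)$ in the same way that $(1\beta)$ sharpens $(1\gamma)$, so the argument is essentially a fusion of the proofs of Theorems \ref{theorem3.3} and \ref{theorem3.5}, with Corollary \ref{corollary2.8} and Theorem \ref{theorem2.7} doing the work. In stage one we assume $(1\alpha)$ for all $n<8q^{2}$ and show, inducting on $k$, that it holds for $k\in(8q^{2},16q^{2})$; in stage two we assume it for $n<16q^{2}$ and deduce it for $k\in(16q^{2},32q^{2})$. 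In stage one put $n=k-8q^{2}$ and $t^{n}=[a,b]$, so that $t^{k}=[a+2q,b]$, $A_{k}=t^{8q^{2}}A_{n}+t^{2q^{2}}A_{n+2q^{2}}$, and, since $n<8q^{2}$, $a<2q$ and $b<2q$. As $a+2q>0$, the goal is to write $A_{k}$ as $[a+2q-1,b]$ plus a sum of strictly earlier monomials.

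Suppose first $a>0$. By $(1\alpha)$ for $n$, $A_{n}$ is $[a-1,b]+(\hbox{earlier})$; since $a-1<2q$, Theorem \ref{theorem2.1} (with $2q$ for $q$) gives $t^{8q^{2}}[a-1,b]=[a+2q-1,b]$, and Corollary \ref{corollary2.8}, together with the triviality that adding a constant to first coordinates preserves ``precedes'', gives $t^{8q^{2}}A_{n}=[a+2q-1,b]+(\hbox{earlier})$. It remains to see $t^{2q^{2}}A_{n+2q^{2}}$ is a sum of monomials strictly before $[a+2q-1,b]$. By Corollary \ref{corollary2.8}, $t^{n+2q^{2}}=t^{2q^{2}}[a,b]$ is $[a+q,b]$ or earlier, and $(1\alpha)$ holds for $n+2q^{2}<k$; if it is strictly earlier, every monomial of $A_{n+2q^{2}}$ has exponent at most $(a+q-1)+b$ and a short check via Corollary \ref{corollary2.8} puts its image strictly before $[a+2q-1,b]$, while if $t^{2q^{2}}[a,b]=[a+q,b]$ then $\lfloor a/q\rfloor$ is even by Theorem \ref{theorem2.7}, which (as $0<a<2q$) forces $0<a<q$, so $q\le a+q-1<2q$, $\lfloor(a+q-1)/q\rfloor$ is odd, $A_{n+2q^{2}}=[a+q-1,b]+(\hbox{earlier})$, and Theorem \ref{theorem2.7} sends $t^{2q^{2}}[a+q-1,b]$ strictly before $[a+2q-1,b]$. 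If instead $a=0$, then $t^{n}=[0,b]$, $t^{k}=[2q,b]$; by $(1\alpha)$, $A_{n}$ is a sum of $[c,d]$ equal to or preceding $[0,b-1]$, and Corollary \ref{corollary2.8} makes each $t^{8q^{2}}[c,d]$ equal to or precede $[2q,b-1]$, hence strictly precede $[2q-1,b]$, so the dominant term now comes from $t^{2q^{2}}A_{n+2q^{2}}$: $t^{n+2q^{2}}=t^{2q^{2}}[0,b]=[q,b]$, $A_{n+2q^{2}}=[q-1,b]+(\hbox{earlier})$, $\lfloor(q-1)/q\rfloor=0$, so Theorem \ref{theorem2.1} gives $t^{2q^{2}}[q-1,b]=[2q-1,b]$, the rest going strictly earlier. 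Either way $A_{k}=[a+2q-1,b]+(\hbox{earlier})$, finishing stage one.

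Stage two is the mirror image: $n=k-16q^{2}$, $t^{n}=[a,b]$, $t^{k}=[a,b+2q]$, $A_{k}=t^{16q^{2}}A_{n}+t^{4q^{2}}A_{n+4q^{2}}$, now with $b<2q$, and with the $b$-coordinate, $t^{4q^{2}}$, $t^{16q^{2}}$ playing the former roles of the $a$-coordinate, $t^{2q^{2}}$, $t^{8q^{2}}$; so one uses the second halves of Theorems \ref{theorem2.1} and \ref{theorem2.7} and Corollary \ref{corollary2.8}. For $a>0$ the target $[a-1,b+2q]+(\hbox{earlier})$ is produced exactly as above---$b<2q$ gives $t^{16q^{2}}[a-1,b]=[a-1,b+2q]$---the only delicate point being that if $t^{4q^{2}}[a,b]=[a,b+q]$ then $\lfloor b/q\rfloor$ is even, so $0\le b<q$, so $\lfloor(b+q)/q\rfloor$ is odd and $t^{4q^{2}}[a-1,b+q]$ precedes $[a-1,b+2q]$. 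For $a=0$ one needs only that $A_{k}$ be a sum of monomials equal to or preceding $[0,b+2q-1]$, and Corollary \ref{corollary2.8} makes both $t^{16q^{2}}A_{n}$ and $t^{4q^{2}}A_{n+4q^{2}}$ such sums; here the possibility that $[0,b+2q-1]$ occurs in both summands and cancels does no harm, since $(1\alpha)$ then makes no exact-coefficient claim. This completes the induction.

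The step I expect to be the real obstacle is the bookkeeping that forces the ``dominant'' monomial $[a+2q-1,b]$ (resp. $[a-1,b+2q]$) to appear in $A_{k}$ with coefficient exactly $1$: it must be supplied by $t^{8q^{2}}A_{n}$ (resp. $t^{16q^{2}}A_{n}$) and must not be cancelled by a term of the second summand. This is exactly where the inequalities $a<2q$ (stage one) and $b<2q$ (both stages)---forced by $n$ lying in the stated interval---are indispensable, since they fix the parities of $\lfloor a/q\rfloor$, $\lfloor(a+q-1)/q\rfloor$, $\lfloor b/q\rfloor$, $\lfloor(b+q)/q\rfloor$ that let Theorem \ref{theorem2.7} push the second summand strictly earlier. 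The boundary cases $a=0$ have to be isolated but are milder, because there $(1\alpha)$ asks only for a sum of monomials equal to or preceding $[0,\cdot]$, so cancellation of the top monomial is harmless.
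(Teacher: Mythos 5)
Your proof is correct and follows the paper's own two-stage induction exactly, using Corollary \ref{corollary2.8} and Theorem \ref{theorem2.7} in the same way; the paper handles the $a>0$ subcases by referring back to the proof of Theorem \ref{theorem3.3} and spells out only the $a=0$ subcases, whereas you fill in the $a>0$ details explicitly (correctly noting that $a<2q$ rules out $q\mid a$ so that $\lfloor(a+q-1)/q\rfloor$ is odd). No gaps; this is the intended argument.
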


\begin{corollary}
\label{corollary3.8}
(This is the purely formal algebraic part of Proposition 4.3 of \cite{6}.)  Let $\alpha_{n}$ be defined by $\alpha_{n+8}=t^{8}\alpha_{n}+t^{2}\alpha_{n+2}$, $\alpha_{1}=0$, $\alpha_{3}=t$, $\alpha_{5}=0$, $\alpha_{7}=t^{5}$. Then if $t^{n}=[a,b]$ with $a>0$, $\alpha_{n}=[a-1,b]\ +$ a sum of earlier monomials.  And if $t^{n}=[0,b]$, $\alpha_{n}$ is a sum of monomials equal to or preceding $[0,b-1]$.
\end{corollary}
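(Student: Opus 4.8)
The plan is to run the same two-stage induction that established Theorems \ref{theorem3.1}, \ref{theorem3.3} and \ref{theorem3.5}. By Remark \ref{remark1.4} and Observation \ref{observation1.1} it suffices to prove: (1) if $(\star)$ holds for $r=q^{2}$ and $(1\alpha)$ holds for $n<8q^{2}$, then $(1\alpha)$ holds for $n<16q^{2}$; and (2) if $(\star)$ holds for $r=2q^{2}$ and $(1\alpha)$ holds for $n<16q^{2}$, then $(1\alpha)$ holds for $n<32q^{2}$. In each part one argues by induction on the odd $k$ in the indicated range, feeding the results of section \ref{section2} into $A_{k}=t^{8q^{2}}A_{n}+t^{2q^{2}}A_{n+2q^{2}}$ in part (1) (where $n=k-8q^{2}$) and into $A_{k}=t^{16q^{2}}A_{n}+t^{4q^{2}}A_{n+4q^{2}}$ in part (2) (where $n=k-16q^{2}$).

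Consider part (1); write $t^{n}=[a,b]$, so that $a,b<2q$ and $t^{k}=[a+2q,b]$. Since $a+2q>0$, the goal is that $A_{k}$ is $[a+2q-1,b]$ plus a sum of earlier monomials. I would show separately that $t^{8q^{2}}A_{n}$ is $[a+2q-1,b]$ plus a sum of earlier monomials, and that $t^{2q^{2}}A_{n+2q^{2}}$ is a sum of monomials \emph{strictly} preceding $[a+2q-1,b]$; strictness is essential here, since otherwise the two copies of $[a+2q-1,b]$ would cancel and the leading monomial of $A_{k}$ would disappear. When $a>0$, $(1\alpha)$ gives that $A_{n}$ is $[a-1,b]$ plus a sum of earlier monomials; Theorem \ref{theorem2.1} with $q$ replaced by $2q$ (legitimate because $a-1<2q$) gives $t^{8q^{2}}[a-1,b]=[a+2q-1,b]$, and Corollary \ref{corollary2.8} (again with $2q$) dispatches the earlier monomials. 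For the second summand one puts $t^{n+2q^{2}}=[a',b']$, which by Corollary \ref{corollary2.8} equals or precedes $[a+q,b]$, applies the inductive $(1\alpha)$ to $A_{n+2q^{2}}$, and pushes everything forward by $t^{2q^{2}}$ using Corollary \ref{corollary2.8}, verifying by the usual sum/second-coordinate comparisons that the outcome lies strictly before $[a+2q-1,b]$. When $a=0$, one has $t^{n}=[0,b]$ and, by Theorem \ref{theorem2.1}, $t^{n+2q^{2}}=[q,b]$ exactly, so $(1\alpha)$ forces $A_{n+2q^{2}}$ to be $[q-1,b]$ plus a sum of earlier monomials; since $\lfloor(q-1)/q\rfloor=0$, Theorem \ref{theorem2.1} gives $t^{2q^{2}}[q-1,b]=[2q-1,b]$, so this time the required leading monomial of $A_{k}$ comes out of $t^{2q^{2}}A_{n+2q^{2}}$, while $t^{8q^{2}}A_{n}$ (with $A_{n}$ a sum of monomials equal to or preceding $[0,b-1]$) contributes only strictly earlier monomials.

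Part (2) is handled the same way, now with $b<2q$, $t^{k}=[a,b+2q]$, and the roles of $a$ and $b$ interchanged; Theorem \ref{theorem2.1} and Corollary \ref{corollary2.8} are used in their forms for multiplication by $t^{16q^{2}}$ and by $t^{4q^{2}}$ (with $q$ or $2q$ as appropriate), and the ``$a=0$'' clause of $(1\alpha)$ governs the case $t^{n}=[0,b]$. Corollary \ref{corollary3.8} then follows: $t^{3}=[1,0]$, $t^{5}=[0,1]$, $t^{7}=[1,1]$ and $t=[0,0]$, so $\alpha_{1}=0$, $\alpha_{3}=t$, $\alpha_{5}=0$, $\alpha_{7}=t^{5}$ satisfy $(1\alpha)$ for $n<8$, and Theorem \ref{theorem3.7} applies with $r=1$.

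The main obstacle, and the only genuinely new point compared with Theorems \ref{theorem3.1} and \ref{theorem3.5}, is ruling out a hidden copy of $[a+2q-1,b]$ inside $t^{2q^{2}}A_{n+2q^{2}}$ in part (1) when $a>0$. Such a copy can appear only when $t^{n+2q^{2}}=[a+q,b]$, which by Theorems \ref{theorem2.1} and \ref{theorem2.7} happens exactly when $\lfloor a/q\rfloor$ is even; then $A_{n+2q^{2}}$ has leading monomial $[a+q-1,b]$, and $t^{2q^{2}}[a+q-1,b]$ equals $[a+2q-1,b]$ precisely when $\lfloor(a+q-1)/q\rfloor$ is even. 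The escape is that $a<2q$ forces $\lfloor a/q\rfloor\in\{0,1\}$, so ``$\lfloor a/q\rfloor$ even'' forces $a<q$; then $a>0$ gives $q\le a+q-1\le 2q-2$, so $\lfloor(a+q-1)/q\rfloor=1$ is odd, and Theorem \ref{theorem2.7} makes $t^{2q^{2}}[a+q-1,b]$ strictly precede $[a+2q-1,b]$, as needed. In part (2) the corresponding difficulty does not occur, because there the ``$-1$'' falls on the first coordinate while the floor controlling multiplication by $t^{4q^{2}}$ involves the second, so they never interfere.
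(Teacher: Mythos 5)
Your argument is correct and follows essentially the same two-stage inductive scheme the paper uses for Theorem \ref{theorem3.7} (reduce via Remark \ref{remark1.4} and Observation \ref{observation1.1}, induct on $k$, and feed Corollary \ref{corollary2.8} together with Theorems \ref{theorem2.1} and \ref{theorem2.7} into the two summands of $A_{k}$), with the verification of the initial conditions and the appeal to Theorem \ref{theorem3.7} at the end being exactly the paper's deduction of Corollary \ref{corollary3.8}. The floor computation you isolate in Stage~1 when $a>0$ (that $a<2q$ and $\lfloor a/q\rfloor$ even force $\lfloor(a+q-1)/q\rfloor=1$) is a correct and useful unpacking of the step the paper compresses into ``proceed as in the proof of Theorem \ref{theorem3.3},'' not a departure from its method.
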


Again we employ the 2 stage process. In the first stage, $k$ is in $(8q^{2}, 16q^{2})$, $n=k-8q^{2}$, $t^{n}=[a,b]$ and  $t^{k}=[a+2q, b]$.  When $a>0$ we proceed as in the proof of Theorem \ref{theorem3.3}, showing that $t^{8q^{2}}A_{n}$ is $[a+2q-1,b]\ +$ a sum of earlier monomials, while $t^{2q^{2}}A_{n+2q^{2}}$ is a sum of monomials preceding $[a+2q-1,b]$. When $a=0$, the situation is different. Since $t^{n}=[0,b]$, $t^{n+2q^{2}}=[q,b]$; see Theorem \ref{theorem2.1}. Now $n+2q^{2}<k$, so by induction $A_{n+2q^{2}}$ is $[q-1,b]\ +$ a sum of earlier monomials $[c,d]$. By Theorem \ref{theorem2.1}, $t^{2q^{2}}[q-1,b]=[2q-1,b]$, while $t^{2q^{2}}[c,d]$, by Corollary \ref{corollary2.8}, is $[c+q,d]$ or an earlier monomial, and so precedes $[2q-1,b]$. If remains to show that $t^{8q^{2}}A_{n}$ is a sum of monomials preceding $[2q-1,b]$. Since $n<8q^{2}$, $A_{n}$ is a sum of $[c,d]$ equal to or preceding $[0,b-1]$, and $t^{8q^{2}}[c,d]$ is $[c+2q,d]$ or an earlier monomial and so is equal to or precedes $[2q,b-1]$ which precedes $[2q-1,b]$. In the second stage, $k$ is in $(16q^{2}, 32q^{2})$, $n=k-16q^{2}$, $t^{n}=[a,b]$ and $t^{k}=[a,b+2q]$.  Now $A_{k}=t^{16q^{2}}A_{n}+t^{4q^{2}}A_{n+4q^{2}}$. When $a>0$ we proceed as in the proof of Theorem \ref{theorem3.3}, showing that $t^{16q^{2}}A_{n}$ is $[a-1,b+2q]\ +$ a sum of earlier monomials, while $t^{4q^{2}}A_{n+4q^{2}}$ is a sum of monomials preceding $[a-1,b+2q]$. Suppose finally that $a=0$. Now $t^{k}$ is $[0,b+2q]$ and we need to show that $A_{k}$ is a sum of monomials equal to or preceding $[0,b+2q-1]$.  It's enough to prove this separately for $t^{16q^{2}}A_{n}$ and $t^{4q^{2}}A_{n+4q^{2}}$. Since $t^{n}=[0,b]$ and $n<16q^{2}$, $A_{n}$ is a sum of $[c,d]$, each of which is $[0,b-1]$ or earlier. Then $t^{16q^{2}}[c,d]$ is $[c,d+2q]$ or an earlier monomial, and so is equal to or precedes $[0,b+2q-1]$. And $t^{n+4q^{2}}=t^{4q^{2}}[0,b]$ is $[0,b+q]$ or an earlier monomial. Using the fact that $n+4q^{2}<k$, we conclude that $A_{n+4q^{2}}$ is a sum of monomials equal to or preceding $[0,b+q-1]$ (both when $t^{n+4q^{2}}$ is $[0,b+q]$ and when it is earlier). Since $t^{4q^{2}}[c,d]$ is $[c,d+q]$ or an earlier monomial, and so is equal to or earlier than $[0,b+2q-1]$, we're done.

\section{A theorem related to the recursion $\bm (\star\star)$}
\label{section4}

In the introduction we described a recursion $(\star\star)$ very much like $(\star)$: $A_{n+16r} = t^{16r}A_{n}+t^{4r}A_{n+4r}+t^{2r}A_{n+2r}$, where $r$ is once again a power of $2$.

\begin{theorem}
\label{theorem4.1}
Suppose $A_{n}$, $n$ odd and $>0$, are in $tZ/2[t^{2}]$, and satisfy the above recursion. If the $A_{n}$ satisfy condition $(1\alpha)$ of the last section whenever $n<16r^{2}$, then they satisfy it for all $n$.
\end{theorem}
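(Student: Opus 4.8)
The plan is to imitate the two-stage induction used for Theorems \ref{theorem3.1}, \ref{theorem3.3}, \ref{theorem3.5} and \ref{theorem3.7}, with the difference that the period of the recursion is now $16r$ rather than $8r$, so each ``doubling step'' consumes one more factor of $2$ in the exponent. Write $r=q^{2}$ for $q$ a power of $2$ (more precisely, I will need to apply Observation \ref{observation1.1} so that $(\star\star)$ holds for a power-of-$2$ value of $r$ of the form $q^{2}$ or $2q^{2}$). The base case is the hypothesis that $(1\alpha)$ holds for $n<16r^{2}$; I will argue by induction on $n$ that it then holds for all $n$, by showing successively that $(1\alpha)$ for $n<16q^{2}$ implies $(1\alpha)$ for $n<32q^{2}$ (stage 1) and that $(1\alpha)$ for $n<32q^{2}$ implies $(1\alpha)$ for $n<64q^{2}$ (stage 2). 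Since every multiple $16r, 32r, \dots$ of the period can be written as $16q^{2}$ or $32q^{2}$ for a suitable power of $2$ $q$, these two stages suffice.

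For stage 1, fix $k$ in $(16q^{2},32q^{2})$, set $n=k-16q^{2}$, and write $t^{n}=[a,b]$. As in the analysis following Theorem \ref{theorem3.1}, since $n<16q^{2}$ we get $b<2q$, hence $t^{k}=t^{16q^{2}}[a,b]=[a,b+2q]$ by the usual $g$-computation (multiplication by $t^{16q^{2}}=t^{4(2q)^{2}}$ and Theorem \ref{theorem2.1} with $q$ replaced by $2q$, noting $\lfloor b/2q\rfloor=0$ is even). Because $(\star\star)$ holds for $r=q^{2}$, we have $A_{k}=t^{16q^{2}}A_{n}+t^{4q^{2}}A_{n+4q^{2}}+t^{2q^{2}}A_{n+2q^{2}}$, so it is enough to show each of the three pieces is a sum of the kind demanded by $(1\alpha)$ relative to $[a,b+2q]$, namely: $t^{16q^{2}}A_{n}$ should contribute the leading monomial $[a-1,b+2q]$ (or, if $a=0$, be equal to or precede $[0,b+2q-1]$), while $t^{4q^{2}}A_{n+4q^{2}}$ and $t^{2q^{2}}A_{n+2q^{2}}$ should each be sums of monomials strictly preceding that leading monomial. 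The treatment of $t^{16q^{2}}A_{n}$ copies the corresponding step in the proof of Theorem \ref{theorem3.7} verbatim (with $q$ replaced by $2q$): $A_{n}$ satisfies $(1\alpha)$, multiplication by $t^{16q^{2}}$ sends $[a-1,b]\mapsto[a-1,b+2q]$ and sends earlier monomials $[c,d]$ to $[c,d+2q]$ or earlier by Corollary \ref{corollary2.8}, which precede $[a-1,b+2q]$. For $t^{4q^{2}}A_{n+4q^{2}}$ we note $t^{n+4q^{2}}=t^{4q^{2}}[a,b]$ is $[a,b+q]$ or an earlier monomial by Corollary \ref{corollary2.8}; since $n+4q^{2}<k$ induction gives the $(1\alpha)$-description of $A_{n+4q^{2}}$, and one more application of Corollary \ref{corollary2.8} (and, in the boundary case where $t^{4q^{2}}[a,b]$ equals $[a,b+q]$, Theorem \ref{theorem2.7}(2), exactly as in the proof of Theorem \ref{theorem3.3}) shows $t^{4q^{2}}A_{n+4q^{2}}$ is a sum of monomials preceding $[a,b+2q]$, hence preceding $[a-1,b+2q]$ when $a>0$. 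The piece $t^{2q^{2}}A_{n+2q^{2}}$ is handled the same way, using $t^{n+2q^{2}}=t^{2q^{2}}[a,b]$ which, by Corollary \ref{corollary2.8}, is $[a+q,b]$ or earlier; a single multiplication by $t^{2q^{2}}$ via Corollary \ref{corollary2.8} lands at $[a+2q,b]$ or earlier, and one checks this precedes $[a,b+2q]$ — indeed $[a+2q,b]$ and $[a,b+2q]$ have the same ``weight'' $a+b+2q$ while the second coordinate of $[a+2q,b]$ is smaller, so it genuinely precedes. The case $a=0$ must be separated out and argued as in the $a=0$ part of the proof of Theorem \ref{theorem3.7}; there one shows all three pieces are sums of monomials equal to or preceding $[0,b+2q-1]$, using that $A_{n}$ is a sum of $[c,d]$ equal to or preceding $[0,b-1]$ and that $t^{2q^{2}}[0,b]=[q,b]$, $t^{4q^{2}}[0,b]$ is $[0,b+q]$ or earlier.

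Stage 2 is the same argument with $16q^{2}$ replaced by $32q^{2}$: for $k$ in $(32q^{2},64q^{2})$, $n=k-32q^{2}$, $t^{n}=[a,b]$, one gets $a<2q$ so $t^{k}=[a+2q,b]$ (using $t^{32q^{2}}=t^{2(4q)^{2}}$ and Theorem \ref{theorem2.1}), and now $(\star\star)$ holds for $r=2q^{2}$, giving $A_{k}=t^{32q^{2}}A_{n}+t^{8q^{2}}A_{n+8q^{2}}+t^{4q^{2}}A_{n+4q^{2}}$; the three summands are handled as before, with $t^{8q^{2}}A_{n+8q^{2}}$ playing the role that $t^{4q^{2}}A_{n+4q^{2}}$ played in stage 1. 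I expect no new phenomena here; the only care needed is to re-derive, for each of the three smaller indices $n$, $n+4q^{2}$, $n+8q^{2}$, what $t^{n}, t^{n+4q^{2}}, t^{n+8q^{2}}$ are as codes $[\cdot,\cdot]$ (each differs from $[a,b]$ by at most a bounded shift, controlled by Corollary \ref{corollary2.8}) so that the induction hypothesis can be invoked and the preceding-comparisons made.

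The main obstacle — really the only nontrivial point — is bookkeeping: with three terms in the recursion instead of two, one must verify in each stage, and in both the $a>0$ and $a=0$ cases, that \emph{all} of the additional monomials arising from the ``middle'' terms $t^{2q^{2}}A_{n+2q^{2}}$ (stage 1) and $t^{4q^{2}}A_{n+4q^{2}}$, $t^{8q^{2}}A_{n+8q^{2}}$ (both stages) strictly precede the designated leading monomial. Each such verification reduces, via Corollary \ref{corollary2.8} applied once or twice, to a comparison of two codes of equal weight with distinguishable second coordinates, together with the boundary refinement supplied by Theorem \ref{theorem2.7} in the case where Corollary \ref{corollary2.8}'s inequality is an equality — precisely the mechanism already exploited in the proof of Theorem \ref{theorem3.3}. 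So while the write-up is longer than for $(\star)$, it introduces nothing conceptually new beyond what sections \ref{section1}--\ref{section3} provide. Corollary \ref{corollary4.2} then follows by checking that the stated initial data ($t$, $t^{5}$, etc., read off as codes $[\cdot,\cdot]$) satisfy $(1\alpha)$ for $n<16r^{2}$ with $r=1$.
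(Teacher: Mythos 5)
Your overall plan is the paper's plan: a two-stage induction of exactly the same shape as in section~\ref{section3}, with two of the three terms of $(\star\star)$ absorbed by the already-proved argument for Theorem~\ref{theorem3.7} and the remaining term bounded separately via Corollary~\ref{corollary2.8}.  The only real difference is a cosmetic one — you run the interval $(16q^{2},32q^{2})$ first and $(32q^{2},64q^{2})$ second, while the paper labels them in the opposite order — and this is immaterial.  However, the write-up has two concrete defects.

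First, in your stage~2 the arithmetic is off.  If $n=k-32q^{2}<32q^{2}$ and $t^{n}=[a,b]$, then $2g(a)<32q^{2}$ gives $g(a)<(4q)^{2}$, so $a<4q$ (not $a<2q$); and writing $t^{32q^{2}}=t^{2(4q)^{2}}$ and applying Theorem~\ref{theorem2.1} with $q$ replaced by $4q$ gives $t^{k}=[a+4q,b]$, not $[a+2q,b]$.  With the wrong target monomial the subsequent ``same as stage~1'' comparisons would be carried out against the wrong thing, so this is not just a typo one can wave away: the rest of stage~2 has to be redone with $[a+4q,b]$ as the leading code, which is what the paper does.

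Second, the inference ``is a sum of monomials preceding $[a,b+2q]$, hence preceding $[a-1,b+2q]$'' runs the wrong way.  Since $[a-1,b+2q]$ is earlier than $[a,b+2q]$, the set of monomials preceding $[a,b+2q]$ is \emph{larger}, and in particular contains $[a-1,b+2q]$ itself; so ``precedes $[a,b+2q]$'' does not yield ``precedes $[a-1,b+2q]$''.  The same coarseness infects your treatment of $t^{2q^{2}}A_{n+2q^{2}}$: two applications of Corollary~\ref{corollary2.8} to the monomial $t^{n+2q^{2}}$ only land you at ``$[a+2q,b]$ or earlier'', and $[a+2q,b]$ has weight $a+b+2q$, which is strictly \emph{greater} than the weight $a+b+2q-1$ of the target $[a-1,b+2q]$ — so this bound is too weak.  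The fix, which the paper carries out, is to invoke $(1\alpha)$ on $A_{n+2q^{2}}$ to get the decrement in the first coordinate: since $t^{n+2q^{2}}$ is $[a+q,b]$ or earlier, $A_{n+2q^{2}}$ is a sum of $[c,d]$ equal to or preceding $[a+q-1,b]$, and then Corollary~\ref{corollary2.8} lands $t^{2q^{2}}[c,d]$ at $[a+2q-1,b]$ or earlier, which finally does have the same weight as $[a-1,b+2q]$ with smaller second coordinate and so precedes it.  You do correctly invoke the induction hypothesis when handling $t^{4q^{2}}A_{n+4q^{2}}$, so you clearly know the mechanism; but the ``hence'' glosses over the exact point where the decremented code is needed, and for the $t^{2q^{2}}$ term it is omitted entirely.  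With those two repairs the proposal coincides with the paper's proof.
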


\begin{corollary}
\label{corollary4.2}
Suppose $A_{n}$ satisfy $A_{n+16}=t^{16}A_{n}+t^{4}A_{n+4}+t^{2}A_{n+2}$, and that $A_{1}$, $A_{3}$, $A_{5}$, $A_{7}$, $A_{9}$, $A_{11}$, $A_{13}$, $A_{15}$ are $0$, $t$, $0$, $t^{5}$, $t^{3}$, $t^{9}+t$, $t^{7}$ and $t^{13}+t^{5}$. Then if $t^{n}=[a,b]$ with $a>0$, $A_{n}$ is $[a-1,b]\ +$ a sum of earlier monomials. And if $t^{n}=[0,b]$, $A_{n}$ is a sum of monomials equal to or preceding $[0,b-1]$.
\end{corollary}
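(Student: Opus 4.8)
The plan is to obtain the corollary as the special case $r = 1$ of Theorem \ref{theorem4.1}. For $r = 1$ the recursion hypothesized in the corollary is precisely $(\star\star)$, and $16r^{2} = 16$; moreover each of the eight given initial polynomials is a sum of odd powers of $t$, hence lies in $tZ/2[t^{2}]$. So Theorem \ref{theorem4.1} applies, and it reduces the entire assertion to checking that $(1\alpha)$ holds for $n = 1, 3, 5, 7, 9, 11, 13, 15$. Everything that follows is a finite verification.

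First I would tabulate the relevant codes, using $[a,b] = t^{n}$ with $n = 1 + 2g(a) + 4g(b)$ together with the fact that $g$ carries a sum of distinct powers of $2$ to the sum of their squares. This gives $t = [0,0]$, $t^{3} = [1,0]$, $t^{5} = [0,1]$, $t^{7} = [1,1]$, $t^{9} = [2,0]$, $t^{11} = [3,0]$, $t^{13} = [2,1]$, and $t^{15} = [3,1]$.

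Then I would run through the eight cases. For $n = 1$ and $n = 5$ we have $A_{n} = 0$ and $t^{n} = [0,b]$ (with $b = 0$ and $b = 1$ respectively), so the ``$a = 0$'' clause of $(1\alpha)$ is satisfied vacuously by the empty sum. For $n = 3, 7, 9, 13$, writing $t^{n} = [a,b]$ we have $a > 0$, and the single monomial making up $A_{n}$ is in each case exactly $[a-1,b]$ (for instance $A_{7} = t^{5} = [0,1] = [a-1,b]$ since $t^{7} = [1,1]$), so $(1\alpha)$ holds with an empty sum of earlier monomials. For $n = 11$ and $n = 15$, $A_{n}$ has two terms: its leading term is $[a-1,b]$, and the other term ($[0,0]$ when $n = 11$, $[0,1]$ when $n = 15$) has strictly smaller index-sum $c + d$ than $[a-1,b]$, hence precedes it; so $(1\alpha)$ again holds. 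This completes the verification and hence the proof.

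There is no genuine obstacle here --- all the real work lies in Theorem \ref{theorem4.1} --- but one must be a little careful in two respects: correctly computing the codes so as to confirm that the leading monomial of each $A_{n}$ is precisely $[a-1,b]$ (and not some neighbouring monomial), and handling the degenerate cases $a = 0$, where $(1\alpha)$ demands only that $A_{n}$ be a sum of monomials equal to or preceding $[0, b-1]$ and is met trivially because the two such $A_{n}$ vanish.
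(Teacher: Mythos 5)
Your proposal is correct and matches the paper's (implicit) argument exactly: the corollary is the $r=1$ case of Theorem \ref{theorem4.1}, and all that remains is the finite check that $(1\alpha)$ holds for $n=1,3,\ldots,15$, which your code computations ($t^{11}=[3,0]$, $t^{15}=[3,1]$, etc.) carry out correctly.
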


(Corollary \ref{corollary4.2} appears as Theorem 2.22 of \cite{2}, and plays an important part in establishing the results of that paper.)

We now give the proof of Theorem \ref{theorem4.1}. Note first that if the $A_{n}$ satisfy $(\star\star)$ for some $r$ then they satisfy it for every larger power of $2$. Also, each of $16r, 32r, 64r, \ldots $ is either $32q^{2}$ or $16q^{2}$ for some $q$ which is a power of $2$. So we can use the 2 stage argument of section \ref{section3}.

In the first stage we know that $(1\alpha)$ holds for $n<32q^{2}$, and want to treat $k$ in $(32q^{2},64q^{2})$. We set $n=k-32q^{2}$, and let $t^{n}=[a,b]$. The argument made earlier with $q$ replaced by $2q$ shows that $t^{k}=[a+4q,b]$. Now $A_{k}=t^{32q^{2}}A_{n}+t^{8q^{2}}A_{n+8q^{2}}+t^{4q^{2}}A_{n+4q^{2}}$. The proof of Theorem \ref{theorem3.7} shows that the sum of the first 2 of these 3 terms is $[a+4q-1,b]\ +$ a sum of earlier monomials. It remains to show that $t^{4q^{2}}A_{n+4q^{2}}$ is a sum of monomials preceding $[a+4q-1,b]$. Now $t^{n+4q^{2}}=t^{4q^{2}}[a,b]$ is $[a,b+q]$ or an earlier monomial. Also, $n+4q^{2}<k$. So by induction, when $a>0$, $A_{n+4q^{2}}$ is a sum of $[c,d]$ equal to or preceding $[a-1,b+q]$, and when $a=0$, it is a sum of $[c,d]$ equal to or preceding $[0,b+q-1]$. Then $t^{4q^{2}}[c,d]$ is $[c,d+q]$ or an earlier monomial, and so precedes $[a+4q-1,b]$.

In the second stage, we know that $(1\alpha)$ holds for $n<16q^{2}$, and want to treat $k$ in $(16q^{2},32q^{2})$. We set $n=k-16q^{2}$, and let $t^{n}=[a,b]$.  Then $t^{k}=[a,b+2q]$. Now $A_{k}=t^{16q^{2}}A_{n}+t^{4q^{2}}A_{n+4q^{2}}+t^{2q^{2}}A_{n+2q^{2}}$. The proof of Theorem \ref{theorem3.7} shows that $t^{16q^{2}}A_{n}+t^{4q^{2}}A_{n+4q^{2}}$ is a sum of the desired sort, and it remains to show that when $a>0$, $t^{2q^{2}}A_{n+2q^{2}}$ is a sum of monomials preceding $[a-1,b+2q]$, while when $a=0$, $t^{2q^{2}}A_{n+2q^{2}}$ is a sum of monomials preceding or equal to $[0, b+2q-1]$. Now $t^{n+2q^{2}}=t^{2q^{2}}[a,b]$ is $[a+q,b]$ or an earlier monomial. Since $n+2q^{2}<k$, $A_{n+2q^{2}}$ is a sum of $[c,d]$ equal to or preceding $[a+q-1,b]$. Now $t^{2q^{2}}[c,d]$ is $[c+q,d]$ or an earlier monomial, and so is equal to or precedes $[a+2q-1,b]$. But when $a>0$, $[a+2q-1,b]$ precedes $[a-1,b+2q]$, while when $a=0$, $[2q-1,b]$ precedes $[0,b+2q-1]$. This completes the proof.

\section{The polynomials $\bm P_{n}$. Statement of Lemma 5.5}
\label{section5}

Recall that in Corollaries \ref{corollary3.8}, \ref{corollary3.4}, \ref{corollary3.2} and \ref{corollary3.6} we defined, for $n$ odd and $>0$, elements $\alpha_{n}$, $\beta_{n}$, $\gamma_{n}$ and $\delta_{n}$ of $V=tZ/2[t^{2}]$. Explicitly, these all satisfy $A_{n+8}=t^{8}A_{n}+t^{2}A_{n+2}$, and the initial conditions are:

\renewcommand{\arraystretch}{1.0} 
\begin{center}
\begin{tabular}{lcl}
$\alpha_{1}, \alpha_{3}, \alpha_{5}, \alpha_{7}$ &are& $0,t,0,t^{5}$\\
$\beta_{1}, \beta_{3}, \beta_{5}, \beta_{7}$ &are& $t,t^{3},t^{5},t^{7}+t^{3}$\\
$\gamma_{1}, \gamma_{3}, \gamma_{5}, \gamma_{7}$ &are& $0,0,t^{3},t^{5}$\\
$\delta_{1}, \delta_{3}, \delta_{5}, \delta_{7}$ &are& $0,0,0,t^{3}$
\end{tabular}
\end{center}

Now let $w$ be an indeterminate over $Z/2$. If $k\equiv 1,3,7\mbox{ or }9\bmod{20}$, then $k=10n-9, 10n-7, 10n-3\mbox{ or }10n-1$ for some odd $n$, and we define $P_{k}$ in $Z/2[w]$ as follows:

\begin{center}
\begin{tabular}{lcl}
$P_{10n-9}$ &=& $w^{-3}\alpha_{n}(w^{10})+w^{-7}\delta_{n}(w^{10})$\\
$P_{10n-7}$ &=& $w^{-9}\beta_{n}(w^{10})+w^{-1}\delta_{n}(w^{10})$\\
$P_{10n-3}$ &=& $w^{-1}\alpha_{n}(w^{10})+w^{-9}\gamma_{n}(w^{10})$\\
$P_{10n-1}$ &=& $w^{-7}\beta_{n}(w^{10})+w^{-3}\gamma_{n}(w^{10})$
\end{tabular}
\end{center}

\begin{theorem}
\label{theorem5.1}
$P_{k+80} = w^{80}P_{k}+w^{20}P_{k+20}$. Furthermore, $P_{1}$, $P_{3}$, $P_{7}$, $P_{9}$, $P_{21}$, $P_{23}$, $P_{27}$, $P_{29}$, $P_{41}$, $P_{43}$, $P_{47}$, $P_{49}$, $P_{61}$, $P_{63}$, $P_{67}$, $P_{69}$ are:

$0$, $w$, $0$, $w^{3}$, $w^{7}$, $w^{21}$, $w^{9}$, $w^{23}$, $0$, $w^{41}$, $w^{21}$, $w^{43}+w^{27}$, $
w^{47}+w^{23}$, $w^{61}+w^{29}+w^{21}$, \linebreak $w^{49}+w^{41}$ and $w^{63}+w^{47}+w^{23}$.
\end{theorem}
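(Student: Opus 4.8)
The statement has two independent parts: the recursion $P_{k+80}=w^{80}P_k+w^{20}P_{k+20}$, and the list of sixteen initial values. I would handle the recursion first, since it reduces cleanly to the recursion $A_{n+8}=t^8A_n+t^2A_{n+2}$ satisfied by each of $\alpha_n,\beta_n,\gamma_n,\delta_n$. Observe that the four families of indices $10n-9,10n-7,10n-3,10n-1$ partition the residue classes $1,3,7,9\bmod{20}$, and that adding $80$ to $k$ corresponds exactly to adding $8$ to the underlying odd index $n$ (since $10(n+8)-9=(10n-9)+80$, etc.). So for a fixed residue class, say $k=10n-9$, I would substitute the definition $P_{10n-9}=w^{-3}\alpha_n(w^{10})+w^{-7}\delta_n(w^{10})$ into both sides. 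The right-hand side $w^{80}P_{10n-9}+w^{20}P_{10n+11}$ becomes $w^{80}\bigl(w^{-3}\alpha_n(w^{10})+w^{-7}\delta_n(w^{10})\bigr)+w^{20}\bigl(w^{-3}\alpha_{n+2}(w^{10})+w^{-7}\delta_{n+2}(w^{10})\bigr)$, which regroups as $w^{-3}\bigl(w^{80}\alpha_n(w^{10})+w^{20}\alpha_{n+2}(w^{10})\bigr)+w^{-7}\bigl(w^{80}\delta_n(w^{10})+w^{20}\delta_{n+2}(w^{10})\bigr)$. Since $t^8A_n+t^2A_{n+2}$ evaluated at $t=w^{10}$ is $w^{80}A_n(w^{10})+w^{20}A_{n+2}(w^{10})=A_{n+8}(w^{10})$, each grouped term collapses, giving $w^{-3}\alpha_{n+8}(w^{10})+w^{-7}\delta_{n+8}(w^{10})=P_{10n+71}=P_{(10n-9)+80}$. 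The same two-line computation works verbatim for the other three residue classes with their respective coefficient pairs $(\beta,\delta)$, $(\alpha,\gamma)$, $(\beta,\gamma)$ and shifts $(w^{-9},w^{-1})$, $(w^{-1},w^{-9})$, $(w^{-7},w^{-3})$.

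For the sixteen initial values, the indices $k\in\{1,3,7,9,21,\dots,69\}$ correspond to $n\in\{1,3,5,7\}$ across the four residue classes. I would simply tabulate. Using the stated initial data $\alpha_{1,3,5,7}=0,t,0,t^5$; $\beta_{1,3,5,7}=t,t^3,t^5,t^7+t^3$; $\gamma_{1,3,5,7}=0,0,t^3,t^5$; $\delta_{1,3,5,7}=0,0,0,t^3$, I substitute $t=w^{10}$ and multiply by the appropriate negative power of $w$. For instance $P_1=P_{10\cdot1-9}=w^{-3}\alpha_1(w^{10})+w^{-7}\delta_1(w^{10})=0+0=0$; $P_3=P_{10\cdot1-7}=w^{-9}\beta_1(w^{10})+w^{-1}\delta_1(w^{10})=w^{-9}\cdot w^{10}+0=w$; and so on through $P_{69}=P_{10\cdot7-1}=w^{-7}\beta_7(w^{10})+w^{-3}\gamma_7(w^{10})=w^{-7}(w^{70}+w^{30})+w^{-3}w^{50}=w^{63}+w^{23}+w^{47}$. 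I would present this as a compact table matching the claimed list, checking in particular that every exponent that arises is non-negative (so that each $P_k$ genuinely lies in $Z/2[w]$ rather than in a Laurent ring) — e.g. the smallest exponent occurring is $w^{23-7}$ type terms but also things like $w^{10-9}=w$, and one must confirm $\delta_n$, which is divisible by $t^3$ when nonzero, contributes $w^{30-7}=w^{23}$ at worst in the first class and $w^{30-1}$ in the second, both fine.

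The recursion part is essentially automatic, and I don't expect it to be the main obstacle. The only real care needed is bookkeeping: making sure the four index parametrizations $k=10n-9,10n-7,10n-3,10n-1$ are set up so that "$k+80$" and "$k+20$" land in the right classes with the right underlying $n$ — in particular that $k+20$ stays in the same residue class as $k$ (it does, since $20\equiv0\bmod{20}$) and shifts $n$ to $n+2$, matching the $A_{n+2}$ in the recursion. The initial-value computation is purely mechanical once one is careful that the evaluation $t\mapsto w^{10}$ turns $t^j$ into $w^{10j}$ and that $Z/2$ arithmetic makes all coefficients $1$, so no cancellation of the listed monomials occurs; the "hard part," such as it is, is just verifying all sixteen entries without arithmetic slips, which is why I would lay them out in a table parallel to the one defining the $P_k$.
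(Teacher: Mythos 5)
Your proposal is correct and takes exactly the approach the paper (tersely) uses: the recursion follows by substituting $t=w^{10}$ into $A_{n+8}=t^8A_n+t^2A_{n+2}$ within each of the four residue-class formulas, and the sixteen initial values are direct substitutions. Your sample computations ($P_1,P_3,P_{69}$) all match the paper's list, and the index bookkeeping ($k+80\leftrightarrow n+8$, $k+20\leftrightarrow n+2$) is set up correctly.
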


\begin{remark}
\label{remark5.2}
If $V^{\prime}\subset Z/2[w]$ is spanned by the $w^{k}$ with $k \equiv 1,3,7 \mbox{ or } 9\bmod{20}$, then each $P_{k}$ lies in $V^{\prime}$.
\end{remark}

The recurrence satisfied by the $P_{k}$ is an immediate consequence of the recurrence satisfied by the $\alpha_{n}$, $\beta_{n}$, $\gamma_{n}$ and $\delta_{n}$. And the initial values are immediately verified.

I will prove a result analogous to that of Corollary \ref{corollary3.8} for the $P_{k}$. This involves putting a linear order on the monomials that lie in $V^{\prime}$.

\begin{definition}
\label{def5.3}
$\varphi : V\rightarrow V^{\prime}$ is the following $Z/2$-linear map:
\begin{enumerate}
\item[(a)] $\varphi$ takes $t,t^{3},t^{5},t^{9}$ to $w,w^{3},w^{7},w^{9}$.
\item[(b)] $\varphi$ takes $t^{7},t^{11},t^{13},t^{15}$ to $w^{21},w^{27},w^{23},w^{29}$.
\item[(c)] $\varphi(t^{16}f)=w^{40}\varphi(f)$.
\end{enumerate}
\end{definition}

Evidently $\varphi$ maps $V$ bijectively to $V^{\prime}$.

\begin{definition}
\label{def5.4}
If $(a,b)$ lies in $N\times N$, then $\langle a,b\rangle$ in $V^{\prime}$ is the monomial $\varphi[a,b]$, where $[a,b]$ is $t^{1+2g(a)+4g(b)}$.
\end{definition}

The $\langle a,b\rangle$  run over all the monomials in $w$ lying in $V^{\prime}$. Just as we had a listing, $[0,0]$, $[1,0]$, $[0,1]$, $[2,0]$, $[1,1]$, $[0,2], \ldots$ of the monomials in $V$, we have a corresponding listing $\langle 0,0\rangle $, $\langle 1,0\rangle $, $\langle 0,1\rangle $, $\langle 2,0\rangle $, $\langle 1,1\rangle $, $\langle 0,2\rangle , \ldots$ of the monomials in $V^{\prime}$.  We again use the language of ``earlier monomial'' or ``preceding monomial''.

The rest of this note is devoted to the following result, which is used to prove the essential Theorem 3.5 of \cite{4}:
\pagebreak

\begin{lemma}
\label{lemma5.5}
Let $P_{k}$ in $V^{\prime}$, $k \equiv 1,3,7 \mbox{ or } 9\bmod{20}$ be as in Theorem \ref{theorem5.1}. Suppose that $w^{k}=\langle a,b\rangle$. Then
\begin{enumerate}
\item[(1)] If $a>0$, $P_{k}=\langle a-1,b\rangle\ +$ a sum of earlier monomials.
\item[(2)] If $a=0$, $P_{k}$ is a sum of monomials preceding or equal to $\langle 0,b-1\rangle$.
\end{enumerate}
\end{lemma}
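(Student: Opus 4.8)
The strategy is to transport the four corollaries (\ref{corollary3.2}, \ref{corollary3.4}, \ref{corollary3.6}, \ref{corollary3.8}) through the map $\varphi$ and the substitution $t\mapsto w^{10}$, then combine them. The key point is that each $P_k$ is, by the definition just above Theorem \ref{theorem5.1}, a sum of two terms of the form $w^{-i}X_n(w^{10})$ where $X\in\{\alpha,\beta,\gamma,\delta\}$ and $i\in\{1,3,7,9\}$; I want to show that under the identification of monomials provided by $\varphi$, one of these two terms carries the ``dominant'' monomial $\langle a-1,b\rangle$ (or, when $a=0$, is controlled by $\langle 0,b-1\rangle$) while the other consists only of strictly earlier monomials. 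First I would record the precise dictionary: for $f\in V$, write $f=\sum[c,d]$ as a sum of monomials in the Nicolas--Serre code; then $w^{-i}f(w^{10})=\sum w^{-i}[c,d](w^{10})$, and I must identify $w^{-i}[c,d](w^{10})=w^{10(1+2g(c)+4g(d))-i}$ as some $\langle c',d'\rangle=\varphi[c',d']$. Since $\varphi(t^{16}h)=w^{40}\varphi(h)$ corresponds to $[c',d']\mapsto[c'+2,d']$ (doubling in the first slot twice is $S_{0,0}^2$-like), and the base cases in Definition \ref{def5.3}(a),(b) spell out the images of $t,t^3,t^5,t^9,t^7,t^{11},t^{13},t^{15}$, this identification is an explicit, if tedious, bookkeeping exercise; the upshot should be a formula sending $[c,d]\mapsto[c',d']$ that is \emph{monotone} for the ``precedes'' order on each of the four residue-class pieces.

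The main structural step is then to check, for each of the four defining formulas for $P_{10n-9}, P_{10n-7}, P_{10n-3}, P_{10n-1}$, that the two summands land in \emph{interleaved but comparable} positions: e.g. for $P_{10n-9}=w^{-3}\alpha_n(w^{10})+w^{-7}\delta_n(w^{10})$, Corollary \ref{corollary3.8} says $\alpha_n$ has dominant monomial $[a-1,b]$ (when $t^n=[a,b]$, $a>0$) with only earlier monomials besides, and Corollary \ref{corollary3.6} says $\delta_n$ is a sum of monomials preceding $[a-1,b]$ (or $\preceq[0,b-1]$); I then need that, after applying $w^{-3}(\,\cdot\,)(w^{10})$ to the first and $w^{-7}(\,\cdot\,)(w^{10})$ to the second, every monomial coming from the $\delta_n$-part is strictly earlier (in the $V'$ order) than the image of the $\alpha_n$-dominant term, and that there is no cancellation between the two parts. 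This is where Remark \ref{remark5.2} and the precise form of $\varphi$ do the work: because the exponents $-3$ and $-7$ shift things into disjoint residue classes mod $20$ up to the overall $w^{10}$-scaling, monomials from the two parts are never equal, so no cancellation occurs; and the order-monotonicity from the previous paragraph turns ``$\delta_n$-monomials precede $[a-1,b]$'' into ``their $\varphi$-images precede $\varphi$ of the image of $[a-1,b]$''. The $a=0$ case is handled the same way, using the $\preceq[0,b-1]$ clauses of both corollaries. The analogous verification for the other three of the four formulas uses the pairs $(\beta,\delta)$, $(\alpha,\gamma)$, $(\beta,\gamma)$ and Corollaries \ref{corollary3.4}, \ref{corollary3.8}, \ref{corollary3.6}, \ref{corollary3.2} in the same pattern.

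Finally I would package all four cases uniformly. Writing $w^k=\langle a,b\rangle$ means $\varphi^{-1}(w^k)=[a,b]=t^{1+2g(a)+4g(b)}$; tracing back through the substitution $t\mapsto w^{10}$ and the shift $w^{-i}$ tells me which residue class $k$ lies in and hence which of the four formulas for $P_k$ applies, and also what $t^n=[a^\sharp,b^\sharp]$ is for the corresponding $n$. The content of the preceding paragraph is exactly that $P_k=\langle a-1,b\rangle+(\text{earlier})$ when $a>0$, and $P_k$ is a sum of monomials $\preceq\langle 0,b-1\rangle$ when $a=0$. I expect the main obstacle to be the bookkeeping in the first paragraph: getting the correspondence $[c,d]\mapsto[c',d']$ exactly right across the four residue classes and verifying it respects the ``precedes'' order, since a sign or off-by-one error there would break the ``strictly earlier'' claims. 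A secondary subtlety is confirming the no-cancellation claim rigorously---i.e. that the two summands of each $P_k$ involve monomials in genuinely disjoint sets---which should follow cleanly from tracking exponents modulo $20$ but needs to be stated carefully.
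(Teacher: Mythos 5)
Your plan is essentially the paper's proof: transport the four corollaries through the dictionary given by $\varphi$ and the substitution $t\mapsto w^{10}$, and argue case by case over the four defining formulas for $P_k$. The paper organizes the ``explicit, if tedious, bookkeeping exercise'' you defer into the operator calculus of Section~\ref{section6}: it defines $Z/2$-linear, order-preserving maps $s_{i,j}:[c,d]\mapsto\langle 2c+i,2d+j\rangle$ (with one extra map $s_{-1,1}$ defined only on $V_3$) and proves, via Lemmas~\ref{lemma6.1}--\ref{lemma6.7}, that for $f$ in the appropriate parity class $V_1$ or $V_3$ each expression $w^{-i}f(w^{10})$ \emph{is} $s_{i',j'}(f)$ for a specific $(i',j')$. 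That turns your monotone correspondence $[c,d]\mapsto[c',d']$ into a sharp statement, and it splits the proof into eight residue classes of $k$ mod~$40$ (not just four mod~$20$), because which $s_{i',j'}$ one gets depends on the parity of $n$ mod~$4$.

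One place where your sketch misplaces emphasis: the ``no cancellation'' claim. It is true (and for the reason you give --- the two summands of $P_k$ land in different residue classes of exponents mod~$40$), but it is not what the argument hinges on. The real issue you must check is that, after applying possibly \emph{different} order-preserving maps to the two summands, every monomial coming from the smaller summand still precedes the dominant monomial coming from the larger one. The paper handles this by noting, in each of the eight cases, a small but essential arithmetic coincidence: the two maps $s_{i,j}$, $s_{i',j'}$ that appear are related so that $s_{i',j'}$ applied to the comparison bound for the subordinate summand gives \emph{exactly} the same monomial as $s_{i,j}$ applied to the dominant monomial of the leading summand (e.g.\ $s_{1,1}([c-1,d])=\langle 2c-1,2d+1\rangle=s_{-1,1}([c,d])$ in the $k\equiv 23\bmod 40$ case). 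Without observing this, ``monotonicity'' alone does not let you compare images across two different maps. A second point that deserves explicit care in your write-up is the $a=0$ subcase (which only occurs for $k\equiv 1,7\bmod 40$): there you need the refinement that a $V_3$-monomial preceding $[0,d]$ must precede or equal $[1,d-1]$, as the paper uses in its final paragraph. If you supply the $s_{i,j}$ bookkeeping, check the matching coincidence in each of the eight cases, and treat $a=0$ separately, your outline becomes the paper's proof.
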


\section{The operators $\bm s_{i,j}$}
\label{section6}

\begin{lemma}
\label{lemma6.1}
If $n\equiv 1\mod{4}$, the $\varphi$ of Definition \ref{def5.3} takes $t^{4n-3}$, $t^{4n-1}$, $t^{4n+1}$ and $t^{4n+5}$ to $w^{10n-9}$, $w^{10n-7}$, $w^{10n-3}$, $w^{10n-1}$.
\end{lemma}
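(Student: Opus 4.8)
The statement of Lemma~\ref{lemma6.1} is a direct computation with the map $\varphi$ of Definition~\ref{def5.3} and the function $g$ of Definition~\ref{def1.2}. The plan is to reduce everything, via part (c) of Definition~\ref{def5.3} (i.e.\ $\varphi(t^{16}f)=w^{40}\varphi(f)$), to the finitely many base cases pinned down by parts (a) and (b) of that definition, and then check those cases by hand.

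\textbf{Step 1: set up the recursion on both sides.} Write $n\equiv 1\bmod 4$ as $n=4m+1$. Then $4n-3=16m+1$, $4n-1=16m+3$, $4n+1=16m+5$, $4n+5=16m+9$, and $10n-9=40m+1$, $10n-7=40m+3$, $10n-3=40m+7$, $10n-1=40m+9$. On the $t$-side, I want to express $t^{16m+e}$ (for $e\in\{1,3,5,9\}$) as $t^{16}$ applied to a lower power; on the $w$-side, express $w^{40m+e'}$ as $w^{40}$ applied to a lower power. The key arithmetic fact is that $g$ behaves additively on disjoint binary digits, and in particular $1+2g(a)+4g(b)\equiv 1\bmod{16}$ exactly when $2\mid a,b$ while $1+2g(a)+4g(b)$ realizes the residues $3,5,9$ in a controlled way; concretely, multiplying by $t^{16}$ corresponds to $[a,b]\mapsto[a+2,b]$ when $\lfloor a/2\rfloor$ is even, which is part (c) of Definition~\ref{def5.3} in disguise.

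\textbf{Step 2: reduce to $m=0$ by induction.} I will argue by induction on $m$. Assuming the claim for $m-1$, i.e.\ $\varphi(t^{16(m-1)+e})=w^{40(m-1)+e'}$ for $(e,e')\in\{(1,1),(3,3),(5,7),(9,9)\}$, I apply $\varphi(t^{16}f)=w^{40}\varphi(f)$ with $f=t^{16(m-1)+e}$ to get $\varphi(t^{16m+e})=w^{40}\cdot w^{40(m-1)+e'}=w^{40m+e'}$, which is exactly the claim for $m$. So the whole lemma collapses to the base case $m=0$, i.e.\ $n=1$: one must check $\varphi(t^{1})=w^{1}$, $\varphi(t^{3})=w^{3}$, $\varphi(t^{5})=w^{7}$, $\varphi(t^{9})=w^{9}$.

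\textbf{Step 3: dispatch the base case.} These four equalities are literally parts~(a) of Definition~\ref{def5.3}: $\varphi$ takes $t,t^{3},t^{5},t^{9}$ to $w,w^{3},w^{7},w^{9}$. So the base case is immediate, and the induction closes.

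\textbf{Main obstacle.} There is no serious obstacle; the only thing requiring care is the bookkeeping in Step~1 — verifying that $t^{16m+e}$ really does equal $t^{16}$ times $t^{16(m-1)+e}$ as an identity of monomials in the $[a,b]$ encoding, so that part~(c) of Definition~\ref{def5.3} applies. This amounts to checking that when $t^{n'}=[a,b]$ with $n'=16(m-1)+e$ and $e\in\{1,3,5,9\}$, the exponent-of-$2$ ``digit~$2$'' of $a$ is absent (so $g(a+2)=g(a)+4$), which follows because $e\in\{1,3,5,9\}$ forces, after subtracting $1$ and dividing appropriately, that the relevant low bits of $a$ come only from $e$ and from $16(m-1)$'s contribution lands in higher bits. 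Part~(b) of Definition~\ref{def5.3} is not needed here (it governs the $t^{7},t^{11},t^{13},t^{15}$ branch, i.e.\ $n\equiv 3\bmod 4$), which is consistent with the hypothesis $n\equiv 1\bmod 4$ in the lemma.
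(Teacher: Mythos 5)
Your Steps 2 and 3 are exactly the paper's two-sentence proof: check $n=1$ from Definition~\ref{def5.3}(a), then induct in steps of four using $\varphi(t^{16}f)=w^{40}\varphi(f)$; this is correct. The bookkeeping in Step 1 and the ``Main obstacle'' paragraph is, however, both unnecessary and partly wrong: Definition~\ref{def5.3}(c) is a linear identity holding for \emph{every} $f\in V$, so $t^{16m+e}=t^{16}\cdot t^{16(m-1)+e}$ is simply an equality of monomials in $t$ and no $[a,b]$-code condition need be verified, and the parenthetical claims there are false (with $n=1+2g(a)+4g(b)$ one has $n\equiv 1\bmod{16}$ iff $4\mid a$ and $2\mid b$, not $2\mid a,b$, and by Theorem~\ref{theorem2.1} with $q=2$ multiplication by $t^{16}$ gives $[a,b]\mapsto[a,b+2]$ when $\lfloor b/2\rfloor$ is even, not $[a,b]\mapsto[a+2,b]$) — but since none of this is used in Steps 2--3, the proof stands.
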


\begin{proof}
When $n=1$ this is Definition \ref{def5.3}~(a). Replacing $n$ by $n+4$ multiplies both $\varphi(t^{4n+c})$ and $w^{10n+d}$ by $w^{40}$, giving the result.\qed
\end{proof}

\begin{lemma}
\label{lemma6.2}
If $n\equiv 3\mod{4}$, the $\varphi$ of Definition \ref{def5.3} takes $t^{4n-5}$, $t^{4n-1}$, $t^{4n+1}$ and $t^{4n+3}$ to $w^{10n-9}$, $w^{10n-3}$, $w^{10n-7}$, $w^{10n-1}$.
\end{lemma}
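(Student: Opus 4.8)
The plan is to imitate the proof of Lemma~\ref{lemma6.1} almost word for word, changing only the base case of the induction. As $n$ runs through the residue class $3\bmod 4$, the four source exponents $4n-5,\,4n-1,\,4n+1,\,4n+3$ run through arithmetic progressions of common difference $16$, while the four target exponents $10n-9,\,10n-3,\,10n-7,\,10n-1$ run through progressions of common difference $40$; and $t^{16}$ corresponds under $\varphi$ to multiplication by $w^{40}$, by Definition~\ref{def5.3}(c). So it suffices to verify the four identities for a single value of $n\equiv 3\bmod 4$ and then propagate them by induction.

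For the base case I would take $n=3$. There $4n-5,\,4n-1,\,4n+1,\,4n+3$ equal $7,\,11,\,13,\,15$, while $10n-9,\,10n-3,\,10n-7,\,10n-1$ equal $21,\,27,\,23,\,29$. Thus the four assertions read $\varphi(t^{7})=w^{21}$, $\varphi(t^{11})=w^{27}$, $\varphi(t^{13})=w^{23}$, $\varphi(t^{15})=w^{29}$, which is precisely Definition~\ref{def5.3}(b). The one point worth a moment's attention is the matching: the lemma pairs $t^{4n-1}$ with $w^{10n-3}$ and $t^{4n+1}$ with $w^{10n-7}$, so the middle two images are interchanged relative to the natural order of the exponents; at $n=3$ this is exactly the pairing $t^{11}\leftrightarrow w^{27}$, $t^{13}\leftrightarrow w^{23}$ recorded in Definition~\ref{def5.3}(b), so the statements are consistent.

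For the inductive step, assume the four identities hold for some $n\equiv 3\bmod 4$. For each $c\in\{-5,-1,1,3\}$ write $t^{4(n+4)+c}=t^{16}\,t^{4n+c}$; then Definition~\ref{def5.3}(c) gives $\varphi(t^{4(n+4)+c})=w^{40}\,\varphi(t^{4n+c})$, and by the inductive hypothesis $\varphi(t^{4n+c})=w^{10n+d}$, where $d$ is the member of $\{-9,-3,-7,-1\}$ paired with $c$. Hence $\varphi(t^{4(n+4)+c})=w^{10n+d+40}=w^{10(n+4)+d}$, which is the assertion for $n+4$. Since every positive integer $\equiv 3\bmod 4$ is reached from $3$ by repeatedly adding $4$, this completes the induction. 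There is no genuine obstacle here: the argument is a routine bookkeeping exercise entirely parallel to Lemma~\ref{lemma6.1}, the only thing to keep straight being the permuted correspondence of exponents used in the base case.
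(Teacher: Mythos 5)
Your proof is correct and is exactly the paper's argument: the base case $n=3$ is Definition \ref{def5.3}(b), and the step from $n$ to $n+4$ follows from $\varphi(t^{16}f)=w^{40}\varphi(f)$, just as in Lemma \ref{lemma6.1}. The extra care you take with the permuted pairing of the middle two exponents is a worthwhile sanity check but reflects no difference in method.
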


\begin{proof}
For $n=3$ this is Definition \ref{def5.3}~(b), and we continue as above.\qed
\end{proof}

\begin{definition}
\label{def6.3}
$V_{1}$ is the subspace of $V$ spanned by $t^{n}$, $n\equiv 1\mod{4}$, $V_{3}$ the subspace spanned by the $t^{n}$, $n\equiv 3\mod{4}$.
\end{definition}

Note that $V_{1}$ is spanned by the $[c,d]$ with $c$ even, while $V_{3}$ is spanned by the $[c,d]$ with $c$ odd.

\begin{definition}
\label{def6.4} \hspace{2em} 
\begin{enumerate}
\item[(1)] If $i$ and $j$ are $\ge 0$, $s_{i,j}:V\rightarrow V^{\prime}$ is the $Z/2$-linear map taking $[c,d]$ to $\langle 2c+i,2d+j\rangle$.
\item[(2)] $s_{-1,1}:V_{3}\rightarrow V^{\prime}$ is the $Z/2$-linear map taking $[c,d]$ to $\langle 2c-1,2d+1\rangle $. (Since $c>0$ this makes sense.)
\end{enumerate}
\end{definition}

\begin{lemma}
\label{lemma6.5}
If $f$ is in $V_{1}$, then $s_{0,0}$, $s_{1,0}$, $s_{0,1}$ and $s_{2,0}$ take $f$ to $w^{-9}f(w^{10})$, $w^{-7}f(w^{10})$, $w^{-3}f(w^{10})$, $w^{-1}f(w^{10})$.
\end{lemma}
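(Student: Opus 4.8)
The plan is to verify the four identities on a spanning set of monomials of $V_{1}$ and then read off the conclusion from Lemma \ref{lemma6.1}. All the maps in play — the four operators $s_{0,0},s_{1,0},s_{0,1},s_{2,0}$ of Definition \ref{def6.4} and the four operations $f\mapsto w^{-9}f(w^{10})$, $w^{-7}f(w^{10})$, $w^{-3}f(w^{10})$, $w^{-1}f(w^{10})$ — are $Z/2$-linear, so it suffices to treat $f=t^{n}$ with $n\equiv 1\bmod 4$; such monomials span $V_{1}$. For such an $f$ I would write $t^{n}=[c,d]$ and note that $c$ is even, by the remark following Definition \ref{def6.3}.

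Next I would unwind the definition of $s_{i,j}$, so that $s_{i,j}(t^{n})=\langle 2c+i,2d+j\rangle=\varphi[2c+i,2d+j]$, and then compute the four relevant monomials directly from $[a,b]=t^{1+2g(a)+4g(b)}$ and the functional equations $g(2a)=4g(a)$, $g(2a+1)=4g(a)+1$. This gives $[2c,2d]=t^{4n-3}$, $[2c+1,2d]=t^{4n-1}$, $[2c,2d+1]=t^{4n+1}$ immediately; and for the fourth, since $c$ is even we have $2c\equiv 0\bmod 4$, hence $g(2c+2)=g(2c)+g(2)=4g(c)+4$, so $[2c+2,2d]=t^{4n+5}$. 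Therefore $s_{0,0}(t^{n}),s_{1,0}(t^{n}),s_{0,1}(t^{n}),s_{2,0}(t^{n})$ are $\varphi(t^{4n-3}),\varphi(t^{4n-1}),\varphi(t^{4n+1}),\varphi(t^{4n+5})$. Finally, since $n\equiv 1\bmod 4$, Lemma \ref{lemma6.1} identifies these with $w^{10n-9},w^{10n-7},w^{10n-3},w^{10n-1}$, which are precisely $w^{-9}f(w^{10}),w^{-7}f(w^{10}),w^{-3}f(w^{10}),w^{-1}f(w^{10})$ when $f=t^{n}$; this is the assertion.

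There is no serious obstacle here; the only step that needs attention — and the sole reason the hypothesis $f\in V_{1}$ enters — is the identity $[2c+2,2d]=t^{4n+5}$. For odd $c$ this fails: adding $2$ to an odd $c$ triggers a carry, so $g(2c+2)$ need not equal $4g(c)+4$, and the clean relation is destroyed (this is why $s_{2,0}$ behaves differently on $V_{3}$). When $c$ is even no carry occurs and the relation holds. For consistency with the congruence hypothesis of Lemma \ref{lemma6.1} one should also observe that $[c,d]=t^{n}$ with $c$ even does force $n\equiv 1\bmod 4$, since then $2g(c)\equiv 0\bmod 8$ and $4g(d)\equiv 0\bmod 4$. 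Everything else is routine manipulation of the definition of $g$.
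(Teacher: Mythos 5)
Your proof is correct and follows essentially the same route as the paper: reduce by linearity to $f=t^n=[c,d]$ with $n\equiv 1\bmod 4$ (so $c$ even), compute $[2c,2d]$, $[2c+1,2d]$, $[2c,2d+1]$, $[2c+2,2d]$ as $t^{4n-3}$, $t^{4n-1}$, $t^{4n+1}$, $t^{4n+5}$, and apply $\varphi$ together with Lemma \ref{lemma6.1}. Your added remarks about why $c$ must be even and why the no-carry condition is needed for the $s_{2,0}$ computation are accurate, just more explicit than the paper's brief ``arguing as in Lemma \ref{lemma2.3}.''
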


\begin{proof}
We may assume $f=t^{n}$ with $n\equiv 1\mod{4}$. Write $f=[c,d]$. Arguing as in Lemma \ref{lemma2.3} we find that $[2c,2d]=t^{4n-3}$, $[2c+1,2d]=t^{4n-1}$, $[2c,2d+1]=t^{4n+1}$ and $[2c+2,2d]=t^{4n+5}$. (For the last of these, note that $c$ is even.)  Applying $\varphi$ and using Lemma \ref{lemma6.1} we get the result.\qed
\end{proof}

\begin{lemma}
\label{lemma6.6}
If $f$ is in $V_{3}$, then $s_{-1,1}$, $s_{1,0}$, $s_{0,1}$ and $s_{1,1}$ take $f$ to $w^{-9}f(w^{10})$, $w^{-3}f(w^{10})$, $w^{-7}f(w^{10})$ and $w^{-1}f(w^{10})$.
\end{lemma}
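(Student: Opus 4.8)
The argument runs exactly parallel to that of Lemma \ref{lemma6.5}, with $V_{1}$ replaced by $V_{3}$, Lemma \ref{lemma6.1} replaced by Lemma \ref{lemma6.2}, and the operator $s_{2,0}$ replaced by $s_{-1,1}$. By $Z/2$-linearity of all five maps it suffices to treat $f=t^{n}$ with $n\equiv 3\mod 4$. Write $f=[c,d]$; by the note following Definition \ref{def6.3} the integer $c$ is odd (this is exactly the condition $n\equiv 3\mod 4$ recast in the code). So $n=1+2g(c)+4g(d)$, and all four of the target monomials $[2c-1,2d+1]$, $[2c+1,2d]$, $[2c,2d+1]$, $[2c+1,2d+1]$ are legitimate (the first requires $c>0$, which holds since $c$ is odd).

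Next I would compute these four monomials as powers of $t$, arguing as in Lemma \ref{lemma2.3} from the functional equations for $g$. For the three "positive-shift" monomials this is immediate: $g(2c+1)=4g(c)+1$ and $g(2d+1)=4g(d)+1$ give $[2c+1,2d]=t^{4n-1}$, $[2c,2d+1]=t^{4n+1}$, and $[2c+1,2d+1]=t^{4n+3}$, just as in Lemma \ref{lemma6.5}. The one genuinely new point is $[2c-1,2d+1]$. Here I would use that $c$ is odd: writing $c=2c'+1$, one has $g(c)=4g(c')+1$ while $g(2c-1)=g(2(c-1)+1)=4g(c-1)+1=16g(c')+1$, so that $g(2c-1)=4g(c)-3$. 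Substituting into $1+2g(2c-1)+4g(2d+1)$ gives $[2c-1,2d+1]=t^{4n-5}$. Thus $s_{-1,1}$, $s_{1,0}$, $s_{0,1}$, $s_{1,1}$ take $f$ to $\varphi(t^{4n-5})$, $\varphi(t^{4n-1})$, $\varphi(t^{4n+1})$, $\varphi(t^{4n+3})$.

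Finally I would apply Lemma \ref{lemma6.2} (valid precisely because $n\equiv 3\mod 4$), which identifies $\varphi(t^{4n-5})$, $\varphi(t^{4n-1})$, $\varphi(t^{4n+1})$, $\varphi(t^{4n+3})$ with $w^{10n-9}$, $w^{10n-3}$, $w^{10n-7}$, $w^{10n-1}$ respectively. Since $f=t^{n}$ means $f(w^{10})=w^{10n}$, these are exactly $w^{-9}f(w^{10})$, $w^{-3}f(w^{10})$, $w^{-7}f(w^{10})$, $w^{-1}f(w^{10})$, matching the order $s_{-1,1}$, $s_{1,0}$, $s_{0,1}$, $s_{1,1}$ in the statement. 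I expect the only mildly delicate step is the computation $g(2c-1)=4g(c)-3$ and keeping track that the labelling of the four exponents $w^{10n-9},w^{10n-3},w^{10n-7},w^{10n-1}$ is the permuted one coming from Lemma \ref{lemma6.2} (note the middle two are swapped relative to Lemma \ref{lemma6.1}); everything else is the same bookkeeping already carried out for Lemma \ref{lemma6.5}.
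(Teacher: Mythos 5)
Your proposal is correct and follows essentially the same route as the paper: reduce by linearity to $f=t^{n}=[c,d]$ with $c$ odd, identify the four monomials as $t^{4n-5}$, $t^{4n-1}$, $t^{4n+1}$, $t^{4n+3}$, and apply $\varphi$ via Lemma \ref{lemma6.2}. The only difference is at the one delicate monomial: you verify $[2c-1,2d+1]=t^{4n-5}$ by directly computing $g(2c-1)=4g(c)-3$ (which checks out), whereas the paper gets it for free by noting that $c$ odd gives $[c-1,d]=t^{n-2}$ and then applying Lemma \ref{lemma2.3} to $t^{n-2}$.
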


\begin{proof}
We may assume $f=t^{n}$ with $n\equiv 3\mod{4}$. Write $f=[c,d]$. Since $c$ is odd, $[c,d]=t^{2}[c-1,d]$. So $[c-1,d]=t^{n-2}$, and $[2c-1,2d+1]$ is, by Lemma \ref{lemma2.3}, just $t^{4n-5}$. Lemma \ref{lemma2.3} also shows that $[2c+1,2d]=t^{4n-1}$, $[2c,2d+1]=t^{4n+1}$, $[2c+1,2d+1]=t^{4n+3}$. Now apply $\varphi$ and use Lemma \ref{lemma6.2}.\qed
\end{proof}

\begin{lemma}
\label{lemma6.7}
If $n\equiv 1\mod{4}$, $\beta_{n}$ and $\delta_{n}$ are in $V_{1}$, while $\alpha_{n}$ and $\gamma_{n}$ are in $V_{3}$.  If $n\equiv 3\mod{4}$, $\alpha_{n}$ and $\gamma_{n}$ are in $V_{1}$, while $\beta_{n}$ and $\delta_{n}$ are in $V_{3}$.
\end{lemma}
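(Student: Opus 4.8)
The plan is to argue by induction on $n$, using the recursion $A_{n+8}=t^{8}A_{n}+t^{2}A_{n+2}$ that is common to all four families $\alpha,\beta,\gamma,\delta$, together with the observation that multiplication by $t^{8}$ preserves each of $V_{1}$ and $V_{3}$ (since $8\equiv 0\bmod 4$), while multiplication by $t^{2}$ swaps them (since $2+1\equiv 3$ and $2+3\equiv 1\bmod 4$). The statement to be proved is a congruence condition modulo $8$ on $n$ — really a statement about which residue class mod $4$ the exponents of $A_{n}$ occupy — so it is natural to track the assertion as $n$ runs through the eight residues $1,3,5,\dots,15 \bmod{16}$; but since the recursion has step $8$, the relevant period is $8$, and the pattern of memberships is determined once we know it for $n\in\{1,3,5,7\}$ and then propagate by the recursion.

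First I would record the base cases by inspection of the initial data in Section \ref{section5}. For $\alpha$: $\alpha_{1}=0$ (trivially in any $V_{i}$), $\alpha_{3}=t\in V_{1}$, $\alpha_{5}=0$, $\alpha_{7}=t^{5}\in V_{1}$; so $\alpha_{n}\in V_{3}$ fails to be literally checkable at $n\equiv 1$ only because the relevant terms vanish, but for the odd-indexed values that are nonzero the pattern is: $\alpha_{n}\in V_{3}$ when $n\equiv 1\bmod 4$ (vacuous at $n=1,5$) and $\alpha_{n}\in V_{1}$ when $n\equiv 3\bmod 4$ ($\alpha_{3}=t$, $\alpha_{7}=t^{5}$, both exponents $\equiv 1\bmod 4$). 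Similarly $\beta_{1}=t,\beta_{3}=t^{3},\beta_{5}=t^{5},\beta_{7}=t^{7}+t^{3}$ give $\beta_{1}\in V_{1}$ (exponent $1$), $\beta_{3}\in V_{3}$ (exponent $3$), $\beta_{5}\in V_{1}$ (exponent $5$), $\beta_{7}\in V_{3}$ (exponents $7,3$); and analogously for $\gamma$ ($\gamma_{5}=t^{3}\in V_{3}$ with $5\equiv 1$? — here one must be careful, $\gamma_{5}=t^{3}$ has exponent $\equiv 3\bmod 4$, and $5\equiv 1\bmod 4$, so $\gamma_{5}\in V_{3}$, matching ``$\gamma_{n}\in V_{3}$ when $n\equiv 1$'') and $\delta$ ($\delta_{7}=t^{3}$, exponent $\equiv 3$, $7\equiv 3\bmod 4$, so $\delta_{7}\in V_{3}$). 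In each of the eight base cases the claimed membership holds, with the understanding that the zero polynomial lies in every subspace.

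For the inductive step, fix $n>0$ odd and suppose the assertion holds for all smaller odd indices. Consider $A_{n+8}=t^{8}A_{n}+t^{2}A_{n+2}$ for $A\in\{\alpha,\beta,\gamma,\delta\}$. Split into the two cases $n\equiv 1\bmod 4$ and $n\equiv 3\bmod 4$; then $n+8$ lies in the same class as $n$, while $n+2$ lies in the opposite class. Take, say, $A=\beta$ with $n\equiv 1\bmod 4$: by induction $\beta_{n}\in V_{1}$, so $t^{8}\beta_{n}\in V_{1}$; and $n+2\equiv 3\bmod 4$, so by induction $\beta_{n+2}\in V_{3}$, whence $t^{2}\beta_{n+2}\in V_{1}$ because multiplication by $t^{2}$ carries $V_{3}$ into $V_{1}$. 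Thus $\beta_{n+8}\in V_{1}$, which is the required conclusion since $n+8\equiv 1\bmod 4$. The remaining seven combinations of $\{\alpha,\beta,\gamma,\delta\}\times\{n\equiv 1,n\equiv 3\}$ are identical in form: in every case the $t^{8}$-term stays in the subspace dictated by the inductive hypothesis for index $n$, the $t^{2}$-term is moved into that same subspace from the opposite one, and the two subspaces agree with the prescribed subspace for index $n+8$.

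The only real content to verify — and the one step I would not want to skip — is the elementary fact that $t^{2}\cdot V_{3}\subseteq V_{1}$ and $t^{2}\cdot V_{1}\subseteq V_{3}$, i.e. that multiplying a monomial $t^{m}$ with $m$ odd by $t^{2}$ toggles $m\bmod 4$ between $1$ and $3$; this is immediate, as is $t^{8}\cdot V_{i}\subseteq V_{i}$. I do not anticipate a genuine obstacle here: the lemma is a bookkeeping statement, and the recursion respects the $\bmod 4$ grading of exponents in the cleanest possible way. The one place to be attentive is matching up, for each of the four families and each residue of $n\bmod 4$, the base-case membership with the stated one — in particular getting the $V_{1}$-versus-$V_{3}$ assignment right for $\gamma$ and $\delta$, whose nonzero initial terms sit at index $5$ and $7$ respectively rather than at $1$ or $3$ — but this is just careful reading of the initial-value table in Section \ref{section5}.
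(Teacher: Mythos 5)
Your proof is correct and is essentially the paper's own argument (the paper just states it in one line: ``evidently true when $n$ is $1,3,5,7$; in general it follows from the recursion''). The one piece of rambling in your write-up — the mention of a period-$8$ or mod-$16$ pattern — is unnecessary, since as you then correctly observe the whole thing is governed by $n\bmod 4$, with $t^{8}$ fixing each $V_{i}$ and $t^{2}$ swapping them; otherwise the base-case verifications and the inductive step are exactly right.
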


\begin{proof}
This is evidently true when $n$ is $1$, $3$, $5$ or $7$. In general it follows from the recursion.\qed
\end{proof}

\section{Proof of Lemma 5.5}
\label{section7}

\begin{lemma}
\label{lemma7.1}
Lemma \ref{lemma5.5} holds when $k\equiv 21, 23, 27$ or $29\mod{40}$.
\end{lemma}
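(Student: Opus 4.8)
The idea is to reduce the case $k\equiv 21,23,27,29 \pmod{40}$ to the already-established Corollaries \ref{corollary3.8}, \ref{corollary3.4}, \ref{corollary3.2} and \ref{corollary3.6}, via the operators $s_{i,j}$ of section \ref{section6}. First I would split into the four residue classes. If $k\equiv 21\pmod{40}$, write $k=10n-9$ with $n\equiv 3\pmod 4$, so by Lemma \ref{lemma6.7} $\alpha_n\in V_1$ and $\delta_n\in V_1$; similarly for $k\equiv 23,27,29$ I track which of $\alpha_n,\beta_n,\gamma_n,\delta_n$ occur in the formula for $P_k$ from section \ref{section5} and which of $V_1,V_3$ they lie in. In each case $P_k$ is expressed (via the formulas $P_{10n-9}=w^{-3}\alpha_n(w^{10})+w^{-7}\delta_n(w^{10})$, etc.) as a $Z/2$-combination of terms of the form $w^{-e}f(w^{10})$ with $f$ one of $\alpha_n,\beta_n,\gamma_n,\delta_n$; by Lemmas \ref{lemma6.5} and \ref{lemma6.6} each such term is exactly $s_{i,j}(f)$ for an appropriate pair $(i,j)\in\{(0,0),(1,0),(0,1),(2,0),(1,1),(-1,1)\}$. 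So $P_k$ becomes a sum of two terms $s_{i,j}(f)+s_{i',j'}(f')$.

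Next I would determine, for each residue class, which monomial $\langle a,b\rangle$ equals $w^k$, and express $\langle a,b\rangle$, $\langle a-1,b\rangle$ (or $\langle 0,b-1\rangle$) as images under the relevant $s_{i,j}$ of monomials $[a_0,b_0]$, $[a_0\!-\!1,b_0]$ (or $[0,b_0\!-\!1]$) in $V$. This is where the key structural fact is needed: each $s_{i,j}$ is order-preserving on monomials, i.e.\ if $[c,d]$ precedes $[c',d']$ then $s_{i,j}[c,d]$ precedes $s_{i,j}[c',d']$ — this should follow from Lemmas \ref{lemma6.5}/\ref{lemma6.6} and the observation that all the maps $w^{-e}f(w^{10})$ preserve the ordering induced by the listing, together with the analogous fact for $S_{i,j}$ used in section \ref{section2}. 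Granting this, I apply Corollary \ref{corollary3.8} to $\alpha_n$, Corollary \ref{corollary3.4} to $\beta_n$, Corollary \ref{corollary3.2} to $\gamma_n$ and Corollary \ref{corollary3.6} to $\delta_n$: the ``dominant'' term $[a_0-1,b_0]$ (resp.\ the preceding-only statements) of each transforms under $s_{i,j}$ to a monomial that is either the desired $\langle a-1,b\rangle$ or precedes it, and the ``earlier'' tails transform to sums of earlier monomials.

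The main obstacle will be the bookkeeping: one must check in each of the four residue classes that the two $s_{i,j}$-images of the dominant terms of the two constituent polynomials combine correctly — i.e.\ that the larger of $s_{i,j}[a_0-1,b_0]$ and $s_{i',j'}[a_0'-1,b_0']$ is exactly $\langle a-1,b\rangle$ (so these do not cancel, and nothing beats $\langle a-1,b\rangle$), and in the $a=0$ sub-case that both images land at or before $\langle 0,b-1\rangle$. This requires unwinding $g$ and the code $[\cdot,\cdot]$ on the relevant residues mod small powers of $2$, exactly parallel to the computations in section \ref{section3}, but now with the extra layer of $\varphi$; I expect it to be a finite, if somewhat intricate, verification with no new ideas beyond order-preservation of the $s_{i,j}$ and the four corollaries. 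The $s_{-1,1}$ case (which only makes sense on $V_3$, where the relevant $[c,d]$ has $c>0$) needs a little extra care to confirm it still preserves order and interacts correctly with the ``$a=0$'' boundary.
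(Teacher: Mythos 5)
Your plan is essentially identical to the paper's proof: write $k=10n-9,10n-7,10n-3,10n-1$ with $n\equiv 3\pmod 4$, identify via Lemma \ref{lemma6.7} which of $V_1,V_3$ the relevant $\alpha_n,\beta_n,\gamma_n,\delta_n$ lie in, rewrite each $w^{-e}f(w^{10})$ as an $s_{i,j}(f)$ via Lemmas \ref{lemma6.5}/\ref{lemma6.6}, apply Corollaries \ref{corollary3.8}, \ref{corollary3.4}, \ref{corollary3.2}, \ref{corollary3.6}, and finish using order-preservation of $s_{i,j}$. One detail you were worried about doesn't actually arise here: since $n\equiv 3\pmod 4$ forces $c$ odd in $t^n=[c,d]$, every $w^k=\langle a,b\rangle$ in these residue classes has $a>0$, so the $a=0$ subcase of Lemma \ref{lemma5.5} only needs attention for $k\equiv 1,7\pmod{40}$ (handled after Lemma \ref{lemma7.3}); moreover in each of the four cases exactly one of the two constituent polynomials has a dominant term and the other contributes strictly earlier monomials, so no cancellation issue occurs.
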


\begin{proof}
We may write $k$ as $10n-9$, $10n-7$, $10n-3$ or $10n-1$ with $n\equiv 3\mod{4}$. Let $t^{n}=[c,d]$; note that $c$ is odd.

Suppose first $k\equiv 21\mod{40}$. Then $s_{-1,1}(t^{n})=\langle 2c-1,2d+1\rangle$, which by Lemma \ref{lemma6.6} is $w^{-9}(w^{10n})=w^{k}$. So we need to show that $P_{k}=\langle 2c-2,2d+1\rangle \ +$ a sum of earlier monomials. Now $P_{k}=w^{-3}\alpha_{n}(w^{10})+w^{-7}\delta_{n}(w^{10})$, and Lemma \ref{lemma6.7} tells us that $\alpha_{n}$ and $\delta_{n}$ are in $V_{1}$ and $V_{3}$. By Lemmas \ref{lemma6.5} and \ref{lemma6.6}, $s_{0,1}(\alpha_{n})=w^{-3}\alpha_{n}(w^{10})$, $s_{0,1}(\delta_{n})=w^{-7}\delta_{n}(w^{10})$. So $P_{k}=s_{0,1}(\alpha_{n})+s_{0,1}(\delta_{n})$. Corollaries \ref{corollary3.8} and \ref{corollary3.6} show that $\alpha_{n}=[c-1,d]\ +$ a sum of earlier monomials, while $\delta_{n}$ is a sum of monomials preceding $[c-1,d]$. Since the $s_{i,j}$ preserve the linear order on monomials, $s_{0,1}(\alpha_{n})+s_{0,1}(\delta_{n})=\langle 2c-2,2d+1\rangle\ +$ a sum of earlier monomials, as desired.

Suppose $k\equiv 23\mod{40}$. Then $s_{0,1}(t^{n})=\langle 2c,2d+1\rangle$, which by Lemma \ref{lemma6.6} is $w^{-7}(w^{10n})=w^{k}$. So we need to show that $P_{k}=\langle 2c-1,2d+1\rangle \ +$ a sum of earlier monomials. Now $P_{k}=w^{-9}\beta_{n}(w^{10})+w^{-1}\delta_{n}(w^{10})$, and Lemma \ref{lemma6.7} tells us that $\beta_{n}$ and $\delta_{n}$ are in $V_{3}$. By Lemma \ref{lemma6.6}, $s_{-1,1}(\beta_{n})=w^{-9}\beta_{n}(w^{10})$, while  $s_{1,1}(\delta_{n})=w^{-1}\delta_{n}(w^{10})$. So $P_{k}=s_{-1,1}(\beta_{n})+s_{1,1}(\delta_{n})$. By Corollaries \ref{corollary3.4} and \ref{corollary3.6}, $\beta_{n}=[c,d]\ +$ a sum of earlier monomials, while $\delta_{n}$ is a sum of monomials preceding $[c-1,d]$. So $P_{k}=\langle 2c-1,2d+1\rangle\ +$ a sum of earlier monomials.

Suppose $k\equiv 27\mod{40}$. Then $s_{1,0}(t^{n})=\langle 2c+1,2d\rangle$, which by Lemma \ref{lemma6.6} is $w^{-3}(w^{10n})=w^{k}$. So we need to show that $P_{k}=\langle 2c,2d\rangle \ +$ a sum of earlier monomials. Now $P_{k}=w^{-1}\alpha_{n}(w^{10})+w^{-9}\gamma_{n}(w^{10})$, and Lemma \ref{lemma6.7} tells us that $\alpha_{n}$ and $\gamma_{n}$ are in $V_{1}$. By Lemma \ref{lemma6.5}, $s_{2,0}(\alpha_{n})=w^{-1}\alpha_{n}(w^{10})$, while  $s_{0,0}(\gamma_{n})=w^{-9}\gamma_{n}(w^{10})$. So $P_{k}=s_{2,0}(\alpha_{n})+s_{0,0}(\gamma_{n})$. By Corollaries \ref{corollary3.8} and \ref{corollary3.2}, $\alpha_{n}=[c-1,d]\ +$ a sum of earlier monomials, while $\gamma_{n}$ is a sum of monomials preceding $[c,d]$. So $P_{k}=\langle 2c,2d\rangle\ +$ a sum of earlier monomials.

Finally suppose $k\equiv 29\mod{40}$. Then $s_{1,1}(t^{n})=\langle 2c+1,2d+1\rangle$, which by Lemma \ref{lemma6.6} is $w^{-1}(w^{10n})=w^{k}$. So we need to show that $P_{k}=\langle 2c,2d+1\rangle \ +$ a sum of earlier monomials. Now $P_{k}=w^{-7}\beta_{n}(w^{10})+w^{-3}\gamma_{n}(w^{10})$, and Lemma \ref{lemma6.7} tells us that $\beta_{n}$ and $\gamma_{n}$ are in $V_{3}$ and $V_{1}$. By Lemmas \ref{lemma6.6} and \ref{lemma6.5}, $s_{0,1}(\beta_{n})=w^{-7}\beta_{n}(w^{10})$,   $s_{0,1}(\gamma_{n})=w^{-3}\gamma_{n}(w^{10})$. So $P_{k}=s_{0,1}(\beta_{n})+s_{0,1}(\gamma_{n})$. By Corollaries \ref{corollary3.4} and \ref{corollary3.2}, $\beta_{n}=[c,d]\ +$ a sum of earlier monomials, while $\gamma_{n}$ is a sum of monomials preceding $[c,d]$. It follows that $P_{k}=\langle 2c,2d+1\rangle\ +$ a sum of earlier monomials, completing the proof.\qed
\end{proof}

\begin{lemma}
\label{lemma7.2}
Lemma \ref{lemma5.5} holds when $k\equiv 3$ or $9\mod{40}$.
\end{lemma}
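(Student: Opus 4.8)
The statement covers $k \equiv 3$ or $9 \bmod{40}$. By the same reasoning as in Lemma~\ref{lemma7.1}, these are exactly the residues $10n-7$ and $10n-1$ with $n \equiv 1 \bmod 4$ — so we write $k = 10n-7$ or $k = 10n-1$ with $n \equiv 1 \bmod 4$, set $t^n = [c,d]$, and note that now $c$ is \emph{even}. The plan is to mimic Lemma~\ref{lemma7.1} case by case, but using Lemma~\ref{lemma6.5} (for $f \in V_1$) and Lemma~\ref{lemma6.6} (for $f \in V_3$) with the roles of the $s_{i,j}$ adjusted, and using Lemma~\ref{lemma6.7} to decide, for $n \equiv 1 \bmod 4$, which of $\alpha_n,\beta_n,\gamma_n,\delta_n$ land in $V_1$ and which in $V_3$.

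First I would handle $k \equiv 3 \bmod{40}$, i.e. $k = 10n-7$. From Lemma~\ref{lemma6.5} applied to $t^n \in V_1$, the operator $s_{1,0}$ sends $t^n$ to $w^{-7}(w^{10n}) = w^k$, so $w^k = \langle 2c+1, 2d\rangle$ and, since $c$ is even, part~(1) of Lemma~\ref{lemma5.5} requires $P_k = \langle 2c, 2d\rangle\ +$ a sum of earlier monomials. Now $P_k = P_{10n-7} = w^{-9}\beta_n(w^{10}) + w^{-1}\delta_n(w^{10})$; by Lemma~\ref{lemma6.7}, since $n \equiv 1 \bmod 4$, both $\beta_n$ and $\delta_n$ lie in $V_1$, and Lemma~\ref{lemma6.5} identifies $w^{-9}\beta_n(w^{10}) = s_{0,0}(\beta_n)$ and $w^{-1}\delta_n(w^{10}) = s_{2,0}(\delta_n)$. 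Thus $P_k = s_{0,0}(\beta_n) + s_{2,0}(\delta_n)$. Corollary~\ref{corollary3.4} gives $\beta_n = [c,d]\ +$ a sum of earlier monomials, so $s_{0,0}(\beta_n) = \langle 2c, 2d\rangle\ +$ a sum of earlier monomials; Corollary~\ref{corollary3.6} gives that $\delta_n$ is a sum of monomials preceding $[c,d]$ (here $c>0$ since $c$ even need not be zero — but I must also separately dispose of the degenerate subcase $c = 0$, where $\delta_n$ is a sum of monomials $\le [0,d-1]$; in either situation $s_{2,0}(\delta_n)$ is a sum of monomials strictly preceding $\langle 2c, 2d\rangle$, because $s_{2,0}[c',d']=\langle 2c'+2,2d'\rangle$ and one checks $2c'+2+2d' \le 2c+2d$ with $<$ or the tie broken correctly). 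Hence $P_k = \langle 2c,2d\rangle\ +$ a sum of earlier monomials. Finally, when $c = 0$ we are in case~(2): $w^k = \langle 1, 2d\rangle$ so $\langle 0, 2d-1\rangle$ is the relevant bound, and the same computation shows $P_k$ is a sum of monomials $\le \langle 0, 2d-1\rangle$ — I should check this edge case explicitly since the two parts of Lemma~\ref{lemma5.5} split on $a = 2c$ being zero.

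Next, $k \equiv 9 \bmod{40}$, i.e. $k = 10n-1$ with $n \equiv 1 \bmod 4$. Lemma~\ref{lemma6.5} shows $s_{0,1}(t^n) = \langle 2c, 2d+1\rangle = w^{-3}(w^{10n}) = w^k$, so with $a = 2c$ even we need (case~1) $P_k = \langle 2c-1, 2d+1\rangle\ +$ a sum of earlier monomials when $c > 0$, and (case~2) $P_k$ a sum of monomials $\le \langle 0, 2d\rangle$ when $c = 0$. Here $P_k = P_{10n-1} = w^{-7}\beta_n(w^{10}) + w^{-3}\gamma_n(w^{10})$; by Lemma~\ref{lemma6.7}, $\beta_n \in V_1$ and $\gamma_n \in V_3$. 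Lemma~\ref{lemma6.5} gives $w^{-7}\beta_n(w^{10}) = s_{1,0}(\beta_n)$, and Lemma~\ref{lemma6.6} gives $w^{-3}\gamma_n(w^{10}) = s_{1,0}(\gamma_n)$ — wait, I need to recheck which $s_{i,j}$ produces the exponent $w^{-3}$ for $f \in V_3$: from Lemma~\ref{lemma6.6} it is $s_{1,0}$, so $P_k = s_{1,0}(\beta_n) + s_{1,0}(\gamma_n) = s_{1,0}(\beta_n + \gamma_n)$. By Corollary~\ref{corollary3.4}, $\beta_n = [c,d]\ +$ earlier, and by Corollary~\ref{corollary3.2}, $\gamma_n$ is a sum of monomials preceding $[c,d]$; since $s_{1,0}$ preserves the order and sends $[c,d]$ to $\langle 2c+1, 2d\rangle$, we would get $P_k = \langle 2c+1, 2d\rangle\ +$ earlier — but $\langle 2c+1, 2d\rangle$ is \emph{not} $\langle 2c-1, 2d+1\rangle$. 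So the naive matching fails here, and the genuine work is to reconcile $s_{1,0}([c,d]) = \langle 2c+1,2d\rangle$ with the target $\langle 2c-1, 2d+1\rangle$: one must verify that $\langle 2c+1, 2d\rangle$ \emph{precedes or equals} $\langle 2c-1, 2d+1\rangle$ (indeed $(2c+1)+2d = (2c-1)+(2d+1)$ with $2d < 2d+1$, so it precedes), which forces a reconsideration — the leading term of $P_k$ must actually come from a different piece. This is the main obstacle: unlike the $k \equiv 21,23,27,29$ cases where one summand cleanly supplies the dominant monomial $\langle a-1,b\rangle$ and the other is subordinate, for $k \equiv 9 \bmod{40}$ I expect the dominant term $\langle 2c-1,2d+1\rangle$ to arise only after adding the two $s_{1,0}$-images and observing a cancellation or, more likely, that I have mis-assigned an $s_{i,j}$ and should instead be using $s_{-1,1}$ on the $V_3$-part (note $s_{-1,1}[c,d] = \langle 2c-1,2d+1\rangle$ is exactly the target!). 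The correct bookkeeping — pinning down precisely which $s_{i,j}$ realizes each $w^{-(\cdot)}$-shift via Lemmas~\ref{lemma6.5} and~\ref{lemma6.6}, and confirming via Lemma~\ref{lemma6.7} that the relevant polynomial sits in the space on which that $s_{i,j}$ is defined — is the crux, and once it is done the conclusion follows exactly as in Lemma~\ref{lemma7.1} by order-preservation. I would also double-check the $c = 0$ degenerate subcases in both residues, since those are where parts (1) and (2) of Lemma~\ref{lemma5.5} change form.
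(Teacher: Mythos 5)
Your overall strategy---write $k$ as $10n-7$ or $10n-1$ with $n\equiv 1\bmod{4}$, set $t^n=[c,d]$ with $c$ even, then realize $P_k$ via $s_{i,j}$ images of $\alpha_n,\beta_n,\gamma_n,\delta_n$ using Lemmas \ref{lemma6.5}--\ref{lemma6.7} and Corollaries \ref{corollary3.2}--\ref{corollary3.8}---is exactly the paper's. Your treatment of $k\equiv 3\bmod{40}$ is essentially right (see the small points below), but your $k\equiv 9\bmod{40}$ case breaks because of a computational slip, not because there is any real obstacle.

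For $k\equiv 9\bmod{40}$ you wrote $k=10n-1$, yet then asserted $w^k=w^{-3}(w^{10n})$ and identified $w^k$ as $s_{0,1}(t^n)=\langle 2c,2d+1\rangle$. But $w^{-3}w^{10n}=w^{10n-3}$, which is the residue $7\bmod{40}$ (Lemma \ref{lemma7.3}'s case), not $9\bmod{40}$. For $k=10n-1$ you need $w^{-1}w^{10n}$, which by Lemma \ref{lemma6.5} is $s_{2,0}(t^n)=\langle 2c+2,2d\rangle$. With this correct identification, $a=2c+2>0$ always, and the target is $\langle 2c+1,2d\rangle$, which matches perfectly: $P_k=s_{1,0}(\beta_n)+s_{1,0}(\gamma_n)$ (using $\beta_n\in V_1$ via Lemma \ref{lemma6.5} and $\gamma_n\in V_3$ via Lemma \ref{lemma6.6}), and Corollaries \ref{corollary3.4} and \ref{corollary3.2} give $\beta_n=[c,d]+\text{earlier}$, $\gamma_n=\text{sum of monomials preceding }[c,d]$, whence $P_k=\langle 2c+1,2d\rangle+\text{earlier}$. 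There is no cancellation, no ``main obstacle,'' and no reason to reach for $s_{-1,1}$ (which in any event realizes $w^{-9}$, not $w^{-3}$, on $V_3$ per Lemma \ref{lemma6.6}). Your whole discussion after ``the naive matching fails'' is a phantom produced by the misidentification of $w^k$.

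Two smaller points in the $k\equiv 3\bmod{40}$ case. First, Corollary \ref{corollary3.6} gives $\delta_n$ preceding $[c-1,d]$ (when $c>0$), not $[c,d]$ as you wrote; the weaker bound would only give $s_{2,0}(\delta_n)$ preceding $\langle 2c+2,2d\rangle$, which is not enough (one needs preceding $\langle 2c,2d\rangle$), so quote the corollary precisely. Second, you claimed that $c=0$ puts you in part (2) of Lemma \ref{lemma5.5}; it does not, since $a=2c+1=1>0$ always for this residue, so you remain in part (1). The genuine $c=0$ edge cases occur only for $k\equiv 1$ or $7\bmod{40}$, which is why the paper addresses them separately after Lemma \ref{lemma7.3}.
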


\begin{proof}
We may write $k$ as $10n-7$ or $10n-1$ with $n\equiv 1\mod{4}$. Let $t^{n}=[c,d]$; note that $c$ is even.

Suppose $k\equiv 3\mod{40}$. Then $s_{1,0}(t^{n})=\langle 2c+1,2d\rangle$, which by Lemma \ref{lemma6.5} is $w^{-7}w^{10n}=w^{k}$. So we need to show that $P_{k}=\langle 2c,2d\rangle \ +$ a sum of earlier monomials. Now $P_{k}=w^{-9}\beta_{n}(w^{10})+w^{-1}\delta_{n}(w^{10})$, and Lemma \ref{lemma6.7} tells us that $\beta_{n}$ and $\delta_{n}$ are in $V_{1}$. So by Lemma \ref{lemma6.5}, $s_{0,0}(\beta_{n})=w^{-9}\beta_{n}(w^{10})$ while $s_{2,0}(\delta_{n})=w^{-1}\delta_{n}(w^{10})$. So $P_{k}=s_{0,0}(\beta_{n})+s_{2,0}(\delta_{n})$. By Corollaries \ref{corollary3.4} and \ref{corollary3.6}, $\beta_{n}=[c,d]\ +$ a sum of earlier monomials, while $\delta_{n}$ is a sum of monomials preceding $[c-1,d]$. So $P_{k}=\langle 2c,2d\rangle \ +$ a sum of earlier monomials. 

Suppose $k\equiv 9\mod{40}$. Then $s_{2,0}(t^{n})=\langle 2c+2,2d\rangle$, which by Lemma \ref{lemma6.5} is $w^{-1}w^{10n}=w^{k}$. So we need to show that $P_{k}=\langle 2c+1,2d\rangle \ +$ a sum of earlier monomials. Now $P_{k}=w^{-7}\beta_{n}(w^{10})+w^{-3}\gamma_{n}(w^{10})$, and Lemma \ref{lemma6.7} tells us that $\beta_{n}$ and $\gamma_{n}$ are in $V_{1}$ and $V_{3}$. By Lemmas \ref{lemma6.5} and \ref{lemma6.6}, $s_{1,0}(\beta_{n})=w^{-7}\beta_{n}(w^{10})$ while $s_{1,0}(\gamma_{n})=w^{-3}\gamma_{n}(w^{10})$. So $P_{k}=s_{1,0}(\beta_{n})+s_{1,0}(\gamma_{n})$. By Corollaries \ref{corollary3.4} and \ref{corollary3.2}, $\beta_{n}=[c,d]\ +$ a sum of earlier monomials, while $\gamma_{n}$ is a sum of monomials preceding $[c,d]$. So $P_{k}=\langle 2c+1,2d\rangle\ +$ a sum of earlier monomials, completing the proof.\qed
\end{proof}

\begin{lemma}
\label{lemma7.3}
Lemma \ref{lemma5.5} holds when $k\equiv 1$ or $7\mod{40}$ and $a>0$.
\end{lemma}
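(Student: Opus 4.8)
The plan is to run the same kind of argument used for Lemmas~\ref{lemma7.1} and~\ref{lemma7.2}. Since $k\equiv 1$ or $7\bmod{40}$, write $k=10n-9$ or $k=10n-3$ with $n\equiv 1\bmod{4}$, and put $t^{n}=[c,d]$. Because $t^{n}\in V_{1}$ and, by the remark after Definition~\ref{def6.3}, $V_{1}$ is spanned by the $[e,f]$ with $e$ even, $c$ is even. First I would pin down $\langle a,b\rangle$: by Lemma~\ref{lemma6.1}, $w^{k}=\varphi(t^{4n-3})$ when $k=10n-9$ and $w^{k}=\varphi(t^{4n+1})$ when $k=10n-3$, and the computations in the proof of Lemma~\ref{lemma6.5} give $t^{4n-3}=[2c,2d]$ and $t^{4n+1}=[2c,2d+1]$. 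So $\langle a,b\rangle$ is $\langle 2c,2d\rangle$ or $\langle 2c,2d+1\rangle$ respectively; in particular $a=2c$, so the hypothesis $a>0$ is exactly $c>0$.

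Next I would express $P_{k}$ through the operators $s_{i,j}$. When $k=10n-9$, $P_{k}=w^{-3}\alpha_{n}(w^{10})+w^{-7}\delta_{n}(w^{10})$; Lemma~\ref{lemma6.7} gives $\alpha_{n}\in V_{3}$ and $\delta_{n}\in V_{1}$, so Lemmas~\ref{lemma6.6} and~\ref{lemma6.5} turn this into $P_{k}=s_{1,0}(\alpha_{n})+s_{1,0}(\delta_{n})$. When $k=10n-3$, $P_{k}=w^{-1}\alpha_{n}(w^{10})+w^{-9}\gamma_{n}(w^{10})$ with $\alpha_{n},\gamma_{n}\in V_{3}$, and Lemma~\ref{lemma6.6} gives $P_{k}=s_{1,1}(\alpha_{n})+s_{-1,1}(\gamma_{n})$.

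Then I would feed in the shape of $\alpha_{n}$, $\delta_{n}$, $\gamma_{n}$ from Corollaries~\ref{corollary3.8}, \ref{corollary3.6}, \ref{corollary3.2}. Since $c>0$, $\alpha_{n}=[c-1,d]\ +$ a sum of earlier monomials, $\delta_{n}$ is a sum of monomials preceding $[c-1,d]$, and $\gamma_{n}$ is a sum of monomials preceding $[c,d]$. For $k=10n-9$: $\alpha_{n}+\delta_{n}=[c-1,d]\ +$ a sum of earlier monomials, and applying the order-preserving $s_{1,0}$, which sends $[c-1,d]$ to $\langle 2c-1,2d\rangle$, gives $P_{k}=\langle 2c-1,2d\rangle\ +$ a sum of earlier monomials $=\langle a-1,b\rangle\ +$ a sum of earlier monomials. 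For $k=10n-3$: $s_{1,1}$ sends $[c-1,d]$ to $\langle 2c-1,2d+1\rangle$ and preserves order, so $s_{1,1}(\alpha_{n})=\langle 2c-1,2d+1\rangle\ +$ a sum of earlier monomials; and every monomial $[e,f]$ occurring in $\gamma_{n}$ has $e$ odd (as $\gamma_{n}\in V_{3}$) and precedes $[c,d]$, so $s_{-1,1}([e,f])=\langle 2e-1,2f+1\rangle$ precedes $\langle 2c-1,2d+1\rangle$, whence $s_{-1,1}(\gamma_{n})$ is a sum of monomials preceding $\langle 2c-1,2d+1\rangle$. Adding, $P_{k}=\langle 2c-1,2d+1\rangle\ +$ a sum of earlier monomials $=\langle a-1,b\rangle\ +$ a sum of earlier monomials.

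The only point to check carefully---the counterpart of the remark that the $s_{i,j}$ preserve the linear order---is that the assignment $[e,f]\mapsto\langle 2e-1,2f+1\rangle$ (defined whenever $e\ge 1$, in particular on $V_{3}$) is order-preserving and also respects the comparison with $\langle 2c-1,2d+1\rangle$ used above. This is immediate from Definition~\ref{def1.3}: $(2e-1)+(2f+1)=2(e+f)$, so $\langle 2e-1,2f+1\rangle$ precedes $\langle 2e'-1,2f'+1\rangle$ precisely when $e+f<e'+f'$, or $e+f=e'+f'$ with $f<f'$, i.e. precisely when $[e,f]$ precedes $[e',f']$. With that in hand the whole argument is routine; there is no genuine obstacle beyond matching the correct $s_{i,j}$ to each term of $P_{k}$ and keeping track of the various ``preceding'' relations.
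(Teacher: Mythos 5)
Your proof is correct and follows essentially the same route as the paper's: identify $t^n=[c,d]$ with $c$ even, rewrite $P_k$ as $s_{1,0}(\alpha_n)+s_{1,0}(\delta_n)$ (resp. $s_{1,1}(\alpha_n)+s_{-1,1}(\gamma_n)$) via Lemmas \ref{lemma6.5}--\ref{lemma6.7}, and feed in Corollaries \ref{corollary3.8}, \ref{corollary3.6}, \ref{corollary3.2}. The one place you add detail---verifying that $[e,f]\mapsto\langle 2e-1,2f+1\rangle$ is order-preserving so that $s_{-1,1}(\gamma_n)$ can be compared against the leading term $\langle 2c-1,2d+1\rangle$ produced by $s_{1,1}$---is a point the paper leaves implicit, and your check is the right one.
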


\begin{proof}
Now $k=10n-9$ or $10n-3$ with $n\equiv 1\mod{4}$, and $t^{n}=[c,d]$ with $c$ even.

Suppose $k\equiv 1\mod{40}$. Then $s_{0,0}(t^{n})=\langle 2c,2d\rangle$, which by Lemma \ref{lemma6.5} is $w^{-9}w^{10n}=w^{k}$. So $c\ne 0$ and we need to show that $P_{k}=\langle 2c-1,2d\rangle \ +$ a sum of earlier monomials. Now $P_{k}=w^{-3}\alpha_{n}(w^{10})+w^{-7}\delta_{n}(w^{10})$ and by Lemma \ref{lemma6.7}, $\alpha_{n}$ and $\delta_{n}$ are in $V_{3}$ and $V_{1}$. Lemmas \ref{lemma6.6} and \ref{lemma6.5} then show that $P_{k}=s_{1,0}(\alpha_{n})+s_{1,0}(\delta_{n})$. By Corollaries \ref{corollary3.8} and \ref{corollary3.6}, $\alpha_{n}=[c-1,d]\ +$ a sum of earlier monomials, while $\delta_{n}$ is a sum of monomials preceding $[c-1,d]$. So $P_{k}=\langle 2c-1,2d\rangle \ +$ a sum of earlier monomials. 

Suppose $k\equiv 7\mod{40}$. Then $s_{0,1}(t^{n})=\langle 2c,2d+1\rangle$, which is $w^{-3}w^{10n}=w^{k}$. So again $c\ne 0$ and we need to show that $P_{k}=\langle 2c-1,2d+1\rangle \ +$ a sum of earlier monomials. Now $P_{k}=w^{-1}\alpha_{n}(w^{10})+w^{-9}\gamma_{n}(w^{10})$, and both $\alpha_{n}$ and $\gamma_{n}$ are in $V_{3}$. Lemma \ref{lemma6.6} then shows that $P_{k}=s_{1,1}(\alpha_{n})+s_{-1,1}(\gamma_{n})$. By Corollaries \ref{corollary3.8} and \ref{corollary3.2}, $\alpha_{n}=[c-1,d]\ +$ a sum of earlier monomials, while $\gamma_{n}=$ a sum of monomials preceding $[c,d]$. So $P_{k}=\langle 2c-1,2d+1\rangle\ +$ a sum of earlier monomials.\qed
\end{proof}

To complete the proof of Lemma \ref{lemma5.5} we need to consider two final cases---when $w^{k}=\langle 0,b\rangle$ and $k$ is $1$ or $7\bmod 40$.

Suppose $k\equiv 1\mod{40}$. Then $k=10n-9$ with $n\equiv 1\mod{4}$, and as we saw in the last lemma, $t^{n}=[0,d]$ with $b=2d$. Furthermore, $P_{k}=s_{1,0}(\alpha_{n})+s_{1,0}(\delta_{n})$. By Corollaries \ref{corollary3.8} and \ref{corollary3.6}, $\alpha_{n}$ and $\delta_{n}$ are sums of monomials equal to or preceding $[0,d-1]$. Then $P_{k}$ is a sum of monomials equal to or preceding $\langle 1,2d-2\rangle$.  But $\langle 1,2d-2\rangle$ precedes $\langle 0,2d-1\rangle = \langle 0,b-1\rangle$.  Next suppose $k\equiv 7\mod{40}$. Then $k=10n-3$ with $n\equiv 1\mod{4}$, and as we saw in the last lemma, $t^{n}=[0,d]$ with $b=2d+1$. Also, $P_{k}=s_{1,1}(\alpha_{n})+s_{-1,1}(\gamma_{n})$. By Corollaries \ref{corollary3.8} and \ref{corollary3.2}, $\alpha_{n}$ is a sum of monomials equal to or preceding $[0,d-1]$ while $\gamma_{n}$ is a sum of monomials in $V_{3}$ preceding $[0,d]$, and consequently equal to or preceding $[1,d-1]$. Thus $P_{k}$ is a sum of monomials equal to or preceding $\langle 1,2d-1\rangle$.  But $\langle 1,2d-1\rangle$ precedes $\langle 0,2d\rangle = \langle 0,b-1\rangle$, so we're done.



\end{document}